\theoremstyle{plain}
\newtheorem{prop}{\protect\propositionname}
\theoremstyle{plain}
\newtheorem{lem}{\protect\lemmaname}
\theoremstyle{plain}
\newtheorem{thm}{\protect\theoremname}
\theoremstyle{plain}
\newtheorem{defn}{\protect\definitionname}
\theoremstyle{plain}
\newtheorem{cor}{\protect\corollaryname}
\DeclareMathOperator{\esssup}{ess sup}
\DeclareMathOperator{\supp}{supp}
\providecommand{\lemmaname}{Lemma}
\providecommand{\propositionname}{Proposition}
\providecommand{\theoremname}{Theorem}
\providecommand{\definitionname}{Definition}
\providecommand{\corollaryname}{Corollary}
\title{Matrix weighted estimates on spaces of homogeneous type}
\author[G. Claro]{Guido Claro}
\address{(G. Claro) Escuela de Producción, Tecnología y Medio Ambiente. Universidad Nacional de Río Negro, Sede Andina, EL Bolsón, Río Negro, Argentina}
\email{gclaro@unrn.edu.ar }
\author[P. A. Muller]{Pamela A. Muller}
\address{(P. A. Muller) Departamento de Matemática. Universidad Nacional del Sur. Bahía Blanca, Argentina}
\email{pamela.muller@uns.edu.ar}
\author[L. Nowak]{Luis Nowak}
\address{(L. Nowak) Departamento de matemática, FaEA UNComa, IITCI (CONICET-UNCo), Neuquén, Argentina}
\email{luis.nowak@faea.uncoma.edu.ar}
\author[A. Perini]{Alejandra Perini}
\address{(A. Perini) Departamento de matemática, FaEA UNComa, IITCI (CONICET-UNCo), Neuquén, Argentina}
\email{alejandra.perini@faea.uncoma.edu.ar}
\author[I. P. Rivera-R\'{\i}os]{Israel P. Rivera-R\'{\i}os}
\address{(I. P. Rivera-Ríos) Departamento de Análisis Matemático, Estadística e Investigación Operativa y Matemática Aplicada. Facultad de Ciencias, Universidad de Málaga. Málaga, España}
\email{israelpriverarios@uma.es}
\def\dashint{\,\ThisStyle{\ensurestackMath{%
            \stackinset{c}{.2\LMpt}{c}{.5\LMpt}{\SavedStyle-}{\SavedStyle\phantom{\int}}}%
        \setbox0=\hbox{$\SavedStyle\int\,$}\kern-\wd0}\int}
\begin{document}
\begin{abstract}
In this paper matrix quantitative weighted estimates on spaces of homogeneous type, such as endpoint estimates, strong type estimates are provided. To that end we extend some earlier results on convex body domination due to Nazarov, Petermichl, Treil and Volberg \cite{NPTV} to this setting. We also provide a $T(1)$ alike convex body domination result analogous to the one provided by Lerner and Ombrosi in \cite{LO}, and an application to vector valued extensions of Petermichl operators.
\end{abstract}
\maketitle

\section{Introduction and Main Results}

Given a linear operator $T$ that transforms scalar valued functions into scalar valued functions, a reasonable question to address is how to extend such an operator to
a vector valued one.

Several kinds of extensions have been studied in the literature. For example. If $\mathcal{F}_i \subseteq \{f:X \longrightarrow \mathbb{R} \}, i=1,2$ and $T: \mathcal{F}_1 \longrightarrow  \mathcal{F}_2$ we can consider, for  $\{f_j\}_{j=1}^\infty \subseteq \mathcal{F}_1$ and $1\leq q<\infty$ the new operator $\vec{T}$ given by 
\[\vec{T}\vec{f}(x) = \left(\sum_{j=1}^\infty|Tf_j(x)|^q\right)^{\frac{1}{q}}.\]
The study of this kind of extensions, that can be traced back to the thirties with works of authors such as Bochner, Marcinkiewickz, Paley and Zygmund, is a source of classical questions. See for instance \cite{CMO} for some recent generalizations and some further background in that direction. This kind of extensions have also been studied within the realm of the theory of weights.  See for instance works such as \cite{FS, AJ, RRT}.

Another possibility would be the following. Let $\{\vec{e_{j}}\}_{j=1}^{n}$ be an orthonormal
basis of $\mathbb{R}^{n}$ and $\vec{f}:X\rightarrow\mathbb{R}^{n}$ we can 
define 
\[
\vec{T}\vec{f}(x)=\sum_{j=1}^{n}T(\langle \vec{f},\vec{e_{j}}\rangle)(x)\vec{e_{j}},
\]
where $\langle \vec{f},\vec{e_{j}}\rangle$ stands for the usual inner of the vectors $\vec{f}$ and $\vec{e_{j}}$ product in $\mathbb{R}^{n}$.
For this kind of extension studying $\|\vec{T}\vec{f}\|_{L^p(w)}$ with scalar weights would be trivial, provided we are not interested on the precise dependence on $n$. In the sequel, we will use the same $T$ to denote the extensions $\vec{T}$.

Another possibility would be to consider $n\times n$ order matrices as weights. That kind of extension was considered for first by Treil and Volberg in \cite{TV}. We say that a matrix function $W:X\rightarrow\mathbb{R}^n\times\mathbb{R}^n$ is a weight if it is positive-definite and symmetric a.e. Given a matrix weight $W$ and $1\leq p<\infty$ we define 
\[ \|\vec{f}\|_{L^p(W)}=\left(\int|W^\frac{1}{p}\vec{f}|^p\right)^\frac{1}{p}.\]
Note that in the case $W(x)=I_n$ where $I_n$ stands for the identity matrix, we shall drop $W$ from the notation and write just
\[ \|\vec{f}\|_{L^p}=\left(\int|\vec{f}|^p\right)^\frac{1}{p}.\]

As we were saying Treil and Volberg introduced in \cite{TV} the study of inequalities in this setting. They proved that  
\[\|H\vec{f}\|_{L^2(W)}\lesssim\|\vec{f}\|_{L^2(W)}\]
where $H$ is the Hilbert transform if and only if, $W\in A_2$, namely if 
\[[W]_{A_2}=\sup_{I}\frac{1}{|I|}\int_I\frac{1}{|I|}\int_I|W^{-\frac{1}{2}}(y)W^{\frac{1}{2}}(x)|_{op}^2dydx<\infty\]
where $I$ stand for intervals in $\mathbb{R}$.

Since that work the theory was developed with the contributions of a number of authors.  For instance, a weighted version of the Hardy-Littlewood maximal was introduced by Christ and Goldberg in \cite{CG} for $p=2$. The theory for singular integrals  was extended in \cite{NT,V} for every $p>1$. Also Goldberg \cite{G} provided estimates for every $p$ for weighted maximal functions and for weighted maximal Calder\'on-Zygmund operators. 

In the last years quantitative matrix weighted estimates have attracted the interest of a number of authors in the area. The first sharp estimate, for the Christ-Goldberg maximal function, was provided by Isralowitz, Kwon and Pott in \cite{IKP}. In that paper bounds for sparse operators were obtained as well. Not much later, sparse domination technology was developed in this setting. Nazarov, Petermichl, Treil and Volberg \cite{NPTV} showed that suitable convex bodies were a good replacement for averages. Relying upon that result they proved that for every  Calder\'on-Zygmund operator $T$, 
\begin{equation}\label{eq:T32}
\|T\vec{f}\|_{L^2(W)}\lesssim[W]_{A_2}^{\frac{3}{2}}\|\vec{f}\|_{L^2(W)}.
\end{equation}
Hence the $A_2$ Theorem remained an open question up until very recently when Domelevo, Petermichl, Treil and Volberg \cite{DPTV} have shown for the Hilbert transform the dependence on the $A_2$ constant displayed in \eqref{eq:T32} is sharp. 

In that context, a number of papers have been devoted to the study of matrix weighted estimates. For instance, extrapolation and factorization with matrix weights were developed by Bownik and Cruz-Uribe \cite{BCU}, the estimate \eqref{eq:T32} was extended for $p\not=2$ in \cite{CUIM} where some two weight bump estimates were studied as well, extensions to the multiparameter and the multilinear setting were provided in \cite{DKPSiG} and  \cite{KN} respectively, endpoint estimates were introduced in \cite{CUIMPRR} and some of them were shown to be sharp in \cite{LeLiORR}. Also recently results for commutators were obtained in \cite{IPT}. To sum up this has been a quite active field of research in the recent years.

A common point of the aforementioned works is that they had $\mathbb{R}^d$ as an underlying space on which the scalar functions were defined. Our purpose in this work is to replace $\mathbb{R}^d$ by a suitable class of spaces of homogeneous type. The cornerstone result of this paper will be a convex body domination result that generalizes the convex body domination result in \cite{NPTV} to spaces of homogeneous type under the more flexible hypothesis in \cite{LO,L}. Let us provide a few definitions before presenting that result.

We say that $(X,d,\mu)$ is a space of homogeneous type if $X$ is
a set equiped with a quasi-metric $d$ and a doubling Borel measure
$\mu$. Recall that $d$ is a quasi-metric if the usual triangle inequality
holds but with an additional constant, namely if there exists a constant
$c_{d}\geq1$ such that
\[
d(x,y)\leq c_{d}\left(d(x,z)+d(z,y)\right)\qquad x,y,z\in X
\]
Since $\mu$ satisfies the doubling property, there exists $c_{\mu}\geq1$
such that 
\[
\mu(B(x,2\rho))\leq c_{\mu}\mu(B(x,\rho))\qquad x\in X,\quad\rho>0
\]
where $B(x,\rho):=\{y\in X\,:\,d(x,y)<\rho\}$. We shall assume additionally
that all balls $B$ are Borel sets and that $0<\mu(B)<\infty$. 

Since $\mu$ is a Borel measure defined on the Borel $\sigma$-algebra
of the quasi-metric space $(X,d)$ the Lebesgue differentiation theorem
holds and hence continuous functions with bounded support are dense
in $L^{p}(X)$ for every $1\leq p<\infty$. 

If $g:X\rightarrow\mathbb{R}$, for $1\leq s<\infty$ we shall call
\[
\langle g\rangle_{s,Q}=\left(\frac{1}{\mu(Q)}\int_{Q}|g|^{s}d\mu\right)^{\frac{1}{s}}
\]
and for $\vec{f}:X\rightarrow\mathbb{R}^{n}$
\[
\langle\langle \vec{f}\rangle\rangle_{s,Q}=\left\{ \frac{1}{\mu(Q)}\int_{Q}\phi \vec{f}\,:\,\|\phi\|_{L^{s'}\left(\frac{d\mu}{\mu(Q)}\right)}\leq1\right\} .
\]
With the definition above, $\langle\langle \vec{f}\rangle\rangle_{s,Q}$
is a compact, convex, symmetric set.

Given an operator $T$, following  \cite{L, LO} we define
\[
\mathcal{M}_{T,\alpha}^{\#}f(x)=\sup_{B\ni x}\esssup_{y,z\in B}|T(f\chi_{X\setminus\alpha B})(y)-T(f\chi_{X\setminus\alpha B})(z)|
\]
where each $B$ is a ball and for $\alpha \in \mathbb{R^+}$ the set $\alpha B$ is the ball $B$ dilated with factor $\alpha$.

In what follows, we shall use a particular case of the mixed norms introduced in \cite{BP}. That is, for a scalar function $g=g(x_1,x_2)$ where $x_i \in (X_i,\mu_i)$ for $i=1,2$ with each $(X_i,\mu_i)$ a measure space and $1\leq p_1,p_2\leq\infty$, we will write 
\[
||g||_{L^{{(p_{1},p_{2})}}[(A_{1},\mu_{1}),(A_{2},\mu_{2})]}=\|\|g(\cdot_{1},\cdot_{2})\|_{L^{p_{1}}(A_{1},d\mu_{1}(\cdot_{1}))}\|_{L^{p_{2}}(A_{2},d\mu_{2}(\cdot_{2}))}.
\]

Having the definitions above at our disposal we are in the position to state our convex body domination result. This Theorem generalizes the result obtained in \cite{NPTV} and will be a cornerstone in this work.

\begin{thm}
\label{thm:ThmSparse}Let $(X,d,\mu)$ be a space of homogeneous type 
and $\mathcal{D}$ a dyadic system with parameters $c_{0}$, $C_{0}$
and $\delta$. Let us fix $\alpha\geq\frac{3c_{d}^{2}}{\delta}$ and
let $\vec{f}:X\rightarrow\mathbb{R}^{n}$ be a boundedly supported
function such that $|\vec{f}|\in L^{s}(X)$. Let $1\le q,r<\infty$
and $s=\max(q,r)$, and assume that there exist non-increasing functions
$\psi$ and $\phi$ such that for every ball $B$  and any
boundedly supported function  $g\in L^{s}(X,\mathbb{R})$

\[
\mu\left(\{x\in B:|T(g\chi_{B})(x)|>\psi(\lambda)\langle g\rangle_{q,B}\}\right)\le\lambda\mu(B)\quad(0<\lambda<1)
\]
and 
\[
\mu(\{x\in B:\mathcal{M}_{T,\alpha}^{\#}(g\chi_{B})(x)>\phi(\lambda)\langle g\rangle_{r,B}\})\le\lambda\mu(B)\quad(0<\lambda<1)
\]
Then there exists a $\frac{1}{2}$-sparse family $\mathcal{S}\subset\mathcal{D}$
such that 
\[
T\vec{f}(x)\in c_{n,d}c_{T}\sum_{Q\in\mathcal{S}}\langle\langle\vec{f}\rangle\rangle_{s,\alpha Q}\chi_{Q}(x)
\]
In other words we have that
\[
T\vec{f}(x)=c_{n,d}c_{T}\sum_{Q\in\mathcal{S}}\frac{1}{\mu(\alpha Q)}\int_{\alpha Q}\vec{f}(y)k_{Q}(x,y)d\mu(y)\chi_{Q}(x)
\]
where
\[
\left\Vert k_{Q}\right\Vert _{L^{(s',\infty)}\left[(Q,\frac{\mu}{\mu(Q)}),(\alpha Q,\mu)\right]}\leq1. 
\]
Furthermore, there exist $0<c_{0}\leq C_{0}<\infty$,
$0<\delta<1$, $\gamma\geq1$ and $m\in\mathbb{N}$ such that there
are dyadic systems $\mathcal{D}_{1},\dots,\mathcal{D}_{m}$ with parameters
$c_{0},C_{0}$ and $\delta$ and $m$ sparse families $\mathcal{S}_{i}\subset\mathcal{D}_{i}$ 
such that 
\[
T\vec{f}(x)=c_{n,d}c_{T}\sum_{j=1}^{m}\sum_{Q\in\mathcal{S}_{j}}\frac{1}{\mu(Q)}\int_{Q}\vec{f}(y)k_{Q}(x,y)d\mu(y)\chi_{Q}(x)
\]
where\[\left\Vert k_{Q}\right\Vert _{L^{(s',\infty)}\left[(Q,\frac{d\mu(y)}{\mu(Q)}),(Q,d\mu(x))\right]}\leq1  \,. \]
\end{thm}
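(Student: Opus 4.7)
The plan is to adapt the Lerner--Ombrosi stopping-time scheme from \cite{LO,L} to the vector-valued / convex body setting of \cite{NPTV}, and to transplant it to a space of homogeneous type via the Hytönen--Kairema adjacent dyadic grids. Fix a dyadic cube $Q_{0}\in\mathcal{D}$ whose $\alpha$-dilate contains $\supp\vec f$. The single-scale step is: for each cube $Q$ in the working collection, produce pairwise disjoint maximal dyadic subcubes $\{Q'\}\subset\mathcal{D}(Q)$ with $\sum_{Q'}\mu(Q')\le\tfrac{1}{2}\mu(Q)$, such that on $G_{Q}:=Q\setminus\bigcup Q'$ one has the pointwise convex-body bound $T(\vec f\chi_{\alpha Q})(x)\in c_{T}\langle\langle\vec f\rangle\rangle_{s,\alpha Q}$. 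Iterating across the principal tree yields the $\tfrac{1}{2}$-sparse family $\mathcal{S}$.

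The children are built by combining an average-type Calderón--Zygmund stopping for $\langle|\vec f|\rangle_{s,\alpha\cdot}$ with a Calderón--Zygmund cover of the bad set
\[
E_{Q}:=\{x\in Q:|T(\vec f\chi_{\alpha Q})(x)|>\psi(\lambda_{0})\langle|\vec f|\rangle_{q,\alpha Q}\}\cup\{x\in Q:\mathcal{M}_{T,\alpha}^{\#}(|\vec f|\chi_{\alpha Q})(x)>\phi(\lambda_{0})\langle|\vec f|\rangle_{r,\alpha Q}\}.
\]
Applying the two hypotheses to the scalar majorant $|\vec f|$ yields $\mu(E_{Q})\le 2\lambda_{0}\mu(Q)$, and choosing $\lambda_{0}$ together with the density threshold small enough produces the required sparsity. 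For the convex-body bound on $G_{Q}$, reduce to a scalar statement by Hahn--Banach duality: the support function of $\langle\langle\vec f\rangle\rangle_{s,\alpha Q}$ at a unit vector $\vec e\in S^{n-1}$ equals $\langle|\langle\vec f,\vec e\rangle|\rangle_{s,\alpha Q}$ by $L^{s}$--$L^{s'}$ duality applied to the scalar function $\langle\vec f,\vec e\rangle$, so the desired membership is equivalent to
\[
|\langle T(\vec f\chi_{\alpha Q})(x),\vec e\rangle|\le c\,\langle|\langle\vec f,\vec e\rangle|\rangle_{s,\alpha Q}\qquad\forall\,\vec e\in S^{n-1}.
\]
By linearity $\langle T(\vec f\chi_{\alpha Q})(x),\vec e\rangle=T(\langle\vec f,\vec e\rangle\chi_{\alpha Q})(x)$, and for $x\in G_{Q}$ the standard Lerner--Ombrosi scalar argument (using $|\langle\vec f,\vec e\rangle|\le|\vec f|$ so that the hypotheses trigger the same stopping data uniformly in $\vec e$) bounds the right-hand side by $\langle|\langle\vec f,\vec e\rangle|\rangle_{q,\alpha Q}+\langle|\langle\vec f,\vec e\rangle|\rangle_{r,\alpha Q}\le 2\langle|\langle\vec f,\vec e\rangle|\rangle_{s,\alpha Q}$, as required.

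The kernel form follows from the very definition of $\langle\langle\vec f\rangle\rangle_{s,\alpha Q}$: for each fixed $x\in Q$ the convex-body membership produces $\phi_{Q,x}\in L^{s'}(\alpha Q,d\mu/\mu(\alpha Q))$ with $\|\phi_{Q,x}\|_{s'}\le 1$ and $T\vec f(x)|_{Q}=\tfrac{c}{\mu(\alpha Q)}\int\phi_{Q,x}\vec f\,d\mu$; joint measurability of $k_{Q}(x,y):=c\,\phi_{Q,x}(y)$ is obtained from a Kuratowski--Ryll-Nardzewski measurable selection, and the mixed-norm bound is inherited from the pointwise-in-$x$ $L^{s'}$-norm bound on $\phi_{Q,x}$. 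Finally, to replace $\alpha Q$ by $Q$, invoke the Hytönen--Kairema construction of $m$ adjacent dyadic systems $\mathcal{D}_{1},\ldots,\mathcal{D}_{m}$ with common parameters such that every $\alpha Q$, $Q\in\mathcal{D}$, is contained in some $\widetilde Q\in\mathcal{D}_{i}$ with $\mu(\widetilde Q)\lesssim\mu(\alpha Q)$; redistributing $\mathcal{S}$ across the $m$ grids yields the $m$ sparse families $\mathcal{S}_{i}$ of the final statement.

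The main expected obstacle is the measurable-selection step producing a jointly measurable kernel $k_{Q}(x,y)$ with the stated mixed $L^{(s',\infty)}$ norm, since the pointwise-in-$x$ selection of $\phi_{Q,x}$ is not canonical. A secondary subtlety is arranging the CZ stopping so that a \emph{single} sparse family $\mathcal{S}$ controls the scalar estimates for \emph{all} unit directions $\vec e$ simultaneously; the use of $|\vec f|$ as a universal stopping majorant is exactly what decouples the stopping geometry from $\vec e$ and makes the argument work.
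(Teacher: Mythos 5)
There is a genuine gap at the heart of your single-scale step: the passage from the stopping data built on the scalar majorant $|\vec f|$ to the directional bound $|\langle T(\vec f\chi_{\alpha Q})(x),\vec e\rangle|\le c\,\langle|\langle\vec f,\vec e\rangle|\rangle_{s,\alpha Q}$ \emph{uniformly in} $\vec e\in S^{n-1}$. If the exceptional set $E_{Q}$ and the thresholds are defined through $|\vec f|$ (as you propose), then the Lerner--Ombrosi scalar argument applied to $g=\langle\vec f,\vec e\rangle$, using $|\langle\vec f,\vec e\rangle|\le|\vec f|$, only yields on the good set the bound $|T(\langle\vec f,\vec e\rangle\chi_{\alpha Q})(x)|\lesssim\langle|\vec f|\rangle_{s,\alpha Q}$, i.e.\ membership of $T(\vec f\chi_{\alpha Q})(x)$ in a Euclidean ball of radius $\langle|\vec f|\rangle_{s,\alpha Q}$. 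That is the scalar (norm) sparse domination, strictly weaker than membership in $c\,\langle\langle\vec f\rangle\rangle_{s,\alpha Q}$, whose support function $\langle|\langle\vec f,\vec e\rangle|\rangle_{s,\alpha Q}$ can be far smaller than $\langle|\vec f|\rangle_{s,\alpha Q}$ in directions where $\vec f$ has small component. On the other hand you cannot repair this by running the weak-type hypotheses separately for each $\vec e$ with its own threshold $\psi(\lambda)\langle|\langle\vec f,\vec e\rangle|\rangle_{q,\alpha Q}$, since the union of the exceptional sets over the uncountably many directions is not controllable. The missing ingredient is precisely the John ellipsoid reduction (the paper's Lemma \ref{lem:BelongTest}, following \cite{NPTV}): it suffices to verify the directional inequality, with an extra factor $1/n$, only for the $n$ principal axes $\vec e_{1},\dots,\vec e_{n}$ of the John ellipsoid of $\langle\langle\vec f\rangle\rangle_{s,\alpha Q}$, and then the bad set is a union of finitely many ($3n$) sets, each small by the hypotheses applied to the scalar functions $\langle\vec f,\vec e_{i}\rangle$ themselves. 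Without this (or an equivalent finite-direction device) your claimed convex-body bound on $G_{Q}$ simply does not follow, and the sentence asserting that ``the use of $|\vec f|$ as a universal stopping majorant \dots makes the argument work'' is exactly where the argument breaks.

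Some secondary points. The exceptional set must also contain a term with the maximal function $M_{s}(\langle\vec f,\vec e_{i}\rangle\chi_{\alpha Q})$: in the iteration one has to control the tail terms $T(\vec f\chi_{\alpha Q\setminus\alpha P_{j}})\chi_{P_{j}}$, and this is done via $\mathcal{M}^{\#}_{T,\alpha}$ together with an infimum over $P_{j}\setminus\Omega$ of $|T(\langle\vec f,\vec e_{i}\rangle\chi_{\alpha P_{j}})|$, which is then dominated through $M_{s}$; your $E_{Q}$ omits this. Also, the weak-type hypothesis on $T$ is stated for scalar $g$, so ``applying it to $|\vec f|$'' does not control the vector quantity $|T(\vec f\chi_{\alpha Q})|$ (one must sum over the $n$ directions). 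Globally, a single cube $Q_{0}$ with $\alpha Q_{0}\supset\supp\vec f$ only dominates $T\vec f$ on $Q_{0}$; the paper instead uses the covering lemma to produce a partition $\mathcal{P}\subset\mathcal{D}$ of $X$ with $\supp\vec f\subseteq\alpha Q$ for every $Q\in\mathcal{P}$. Finally, for the kernel representation the paper avoids measurable selection altogether: it approximates by simple functions and uses Benedek--Panzone duality of the mixed-norm spaces plus Banach--Alaoglu weak-$*$ compactness; your Kuratowski--Ryll-Nardzewski route might be made to work, but as you note it is not carried out.
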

We remit to Section \ref{sec:dyadicSt} for some further insight on the dyadic structures involved in the statement of the theorem above. As a consequence of this result we will derive a number of quantitative weighted estimates and applications and we will as well generalize the $T(1)$-sparse theorem in \cite[Theorem 4.1]{LO} to spaces of homogeneous type and to the vector valued setting. In the following subsections we gather some of the main results.

\subsection{Weighted estimates}
As we pointed out earlier, the matrix weighted theory was initiated by Treil and Volberg for $p=2$ and further extended by Volberg for every $p>1$. Here we shall use the presentation of the $A_p$ classes in \cite{R}, and also use the definition of the $A_1$ class in \cite{FR}. We say that a matrix weight $W$ is in $A_{p}$ if
\begin{align*}
[W]_{A_{p}} & =\sup_{B}\frac{1}{|B|}\int_{B}\left(\frac{1}{|B|}\int_{B}|W^{-\frac{1}{p}}(y)W^{\frac{1}{p}}(x)|_{op}^{p'}d\mu(y)\right)^{\frac{p}{p'}}d\mu(x)<\infty\qquad1<p<\infty\\{}
[W]_{A_{1}} & =\sup_{B}\esssup_{y\in B}\frac{1}{|B|}\int_{B}|W(x)W^{-1}(y)|_{op}d\mu(y)<\infty\qquad p=1
\end{align*}
where the suprema above are taken over balls of the space.

We recall that the Christ-Goldberg maximal function is defined as follows. Given a weight $W$ we define
\[M_{p,W}\vec{f}(x) =\sup_{B\ni x}\frac{1}{\mu(B)}\int_{B}\left|W^{\frac{1}{p}}(x)W^{-\frac{1}{p}}(y)\vec{f}(y)\right|d\mu(y).\]

\begin{thm}\label{thm:StrongMax} Let $(X,d,\mu)$ be a space of homogeneous type. Let $1\leq  q<p<\infty$. If $W\in A_{p}$ we have that
\[
\|M_{p,W}\vec{f}\|_{L^{p}}\lesssim[W]_{A_{p}}^{\frac{1}{p}}[W^{-\frac{1}{p-1}}]_{A_{\infty,p'}^{sc}}^{\frac{1}{p}}\|\vec{f}\|_{L^{p}}
\]
and if $W\in A_q$,  then
\[
\|M_{p,W}\vec{f}\|_{L^{p}}\lesssim[W]_{A_{q}}^{\frac{1}{p}}\|\vec{f}\|_{L^{p}}.
\]
\end{thm}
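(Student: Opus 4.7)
The strategy is to adapt the argument of Isralowitz, Kwon and Pott \cite{IKP} for the Christ--Goldberg maximal function on $\mathbb{R}^{d}$ to the space of homogeneous type setting. The proof combines three standard ingredients: dyadic reduction via the adjacent dyadic systems used in Theorem \ref{thm:ThmSparse}, a sparse principal-cubes decomposition, and the reducing operators of \cite{R} to re-express matrix averages as scalar operator norms.

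For the first bound ($W\in A_{p}$), I would first use a finite family of adjacent dyadic systems $\mathcal{D}_{1},\dots,\mathcal{D}_{m}$ on $(X,d,\mu)$ to dominate $M_{p,W}\vec{f}$ pointwise by a sum of dyadic versions $M_{p,W}^{\mathcal{D}_{j}}\vec{f}$. For a fixed $\mathcal{D}$, a principal-cubes construction produces a $\tfrac{1}{2}$-sparse family $\mathcal{S}\subset\mathcal{D}$ and pairwise disjoint sets $E_{Q}\subset Q$ with $\mu(E_{Q})\geq\mu(Q)/2$ so that
\[
(M_{p,W}^{\mathcal{D}}\vec{f}(x))^{p}\lesssim\sum_{Q\in\mathcal{S}}\chi_{E_{Q}}(x)\left(\frac{1}{\mu(Q)}\int_{Q}|W^{1/p}(x)W^{-1/p}(y)\vec{f}(y)|\,d\mu(y)\right)^{p}.
\]
For each $Q$, I would introduce a reducing operator $\mathcal{V}_{Q}$ for $W^{-1/p}$ in $L^{p'}(Q,d\mu/\mu(Q))$ and factor $|W^{1/p}(x)W^{-1/p}(y)\vec{f}(y)|\leq|W^{1/p}(x)\mathcal{V}_{Q}^{-1}|_{op}\,|\mathcal{V}_{Q}W^{-1/p}(y)\vec{f}(y)|$. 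Integrating in $x$ and invoking the reducing-operator form of $[W]_{A_{p}}$ gives $\int_{Q}|W^{1/p}(x)\mathcal{V}_{Q}^{-1}|_{op}^{p}\,d\mu(x)\lesssim[W]_{A_{p}}\mu(Q)$. Summing over $\mathcal{S}$ and invoking a Hyt\"onen--P\'erez sharp Carleson embedding, which controls sparse sums of the form $\sum_{Q}\mu(Q)\langle h\rangle_{Q}^{p}$ by $[W^{-1/(p-1)}]_{A_{\infty,p'}^{sc}}\|h\|_{L^{p}}^{p}$ in the sense of \cite{FR}, closes the estimate after taking $p$-th roots.

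For the second bound ($W\in A_{q}$, $q<p$), the same scheme applies but with the interior H\"older tuned to the exponent $q$ rather than $p$. Specifically, a H\"older inequality with exponent pair $(q',q)$ inside the $y$-integral, combined with Jensen's inequality to pass from $\langle|\vec{f}|\rangle_{q,Q}$ to $\langle|\vec{f}|\rangle_{p,Q}$, uses $[W]_{A_{q}}$ directly and bypasses the $A_{\infty}$ correction; the strictly smaller exponent $q$ furnishes the slack otherwise recovered through the $A_{\infty}$ factor, and the containment $A_{q}\subset A_{p}$ ensures the reducing operators are still well defined.

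The main obstacle is the final Carleson embedding: one must carry out the sharp $A_{\infty}$ refinement on spaces of homogeneous type with the scalar constant $[\cdot]_{A_{\infty,p'}^{sc}}$ of \cite{FR}, which requires choosing the right scalarization of $W^{-1/(p-1)}$ so that it matches the reducing-operator averages appearing in $[W]_{A_{p}}$. Neither the reducing-operator construction nor the Carleson lemma should be hard individually, but coordinating them so that the final constants multiply as stated (rather than with an extra factor $[W]_{A_{p}}^{1/p'}$) is where the technical care is needed.
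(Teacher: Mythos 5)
Your skeleton does match the paper's argument (reduction to finitely many adjacent dyadic systems, a stopping-time/principal-cubes family, and reducing operators whose pairing with $[W]_{A_p}$ extracts the factor $[W]_{A_p}^{1/p}$), but the step that actually produces the second factor $[W^{-\frac{1}{p-1}}]_{A_{\infty,p'}^{sc}}^{1/p}$ is misidentified, and this is the heart of the theorem. After the factorization through a reducing matrix, what remains on each stopping cube $J$ is $\bigl(\frac{1}{\mu(J)}\int_J|\mathcal{W}_{J,p}W^{-1/p}(y)\vec f(y)|\,d\mu(y)\bigr)^p\mu(E_J)$, i.e.\ averages of a $J$-dependent, still weighted quantity -- not averages $\langle h\rangle_Q$ of a single scalar function $h$. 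The embedding you invoke, ``$\sum_Q\mu(Q)\langle h\rangle_Q^p\lesssim[W^{-1/(p-1)}]_{A_{\infty,p'}^{sc}}\|h\|_{L^p}^p$'', is not a correct lemma: for a sparse family and unweighted averages of a fixed $h$ this holds with constant $(p')^p$ and no weight constant at all, and it does not apply to the weighted averages you actually have. The paper's mechanism is different: one decouples $W^{-1/p}(y)$ from $\vec f(y)$ by H\"older with exponent $rp'$, where $r=1+\frac{1}{\tau_{c_d\mu}[W^{-\frac{1}{p-1}}]_{A_{\infty,p'}^{sc}}}$, then uses the uniform scalar reverse H\"older inequality for the weights $|W^{-1/p}(\cdot)\vec e|^{p'}$ (Lemma \ref{lem:RevHolderDouble}) to drop the extra $r$ and recover only $[W]_{A_p}^{1/p}$, leaving $\langle|\vec f|\rangle_{(rp')',J}$; the $A_\infty$ factor then enters through $\|M_{(rp')'}\|_{L^p\to L^p}\simeq\bigl(\bigl(\tfrac{p}{(rp')'}\bigr)'\bigr)^{1/(rp')'}\lesssim[W^{-\frac{1}{p-1}}]_{A_{\infty,p'}^{sc}}^{1/p}$. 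Without this reverse-H\"older decoupling, plain H\"older only yields $\langle|\vec f|\rangle_{p,J}$, and the resulting sparse sum corresponds to $M_p$, which is not bounded on $L^p$; so your proposal as written cannot close.

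Two further slips would also need repair. First, the inverse is on the wrong side in the reducing-operator step: if $\mathcal{V}_Q$ reduces $W^{-1/p}$ in $L^{p'}$, then $\frac{1}{\mu(Q)}\int_Q|W^{1/p}(x)\mathcal{V}_Q|_{op}^p\,d\mu(x)\simeq|\mathcal{W}_{Q,p}\mathcal{V}_Q|_{op}^p\lesssim[W]_{A_p}$, whereas $\int_Q|W^{1/p}(x)\mathcal{V}_Q^{-1}|_{op}^p\,d\mu(x)$ is not controlled by $[W]_{A_p}\mu(Q)$; the inverse must sit on the $y$-factor (the paper's variant uses $\mathcal{W}_{J,p}^{-1}$ against $W^{1/p}(x)$ and $\mathcal{W}_{J,p}$ against $W^{-1/p}(y)$, with the $[W]_{A_p}^{1/p}$ then extracted from the $y$-average). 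Second, in the $A_q$ case your Jensen step goes the wrong way: the point is to end with $\langle|\vec f|\rangle_{q,Q}$, $q<p$, so that $M_q$ is bounded on $L^p$; Jensen (together with $|A^\alpha B^\alpha|_{op}\lesssim|AB|_{op}^\alpha$, Lemma \ref{lem:alphaFuera}) is applied to the weight factor to lower its exponent to $q'$, not to pass from $\langle|\vec f|\rangle_{q,Q}$ up to $\langle|\vec f|\rangle_{p,Q}$, which would again leave an unsummable sparse sum.
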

This result extends the sharp estimates obtained in \cite{IM,IKP} to spaces of homogeneous type. As in those works, the sharp dependence of  the scalar case is recovered as well.

Let $1\leq r\leq\infty$. We say that $T$ is an $L^{r}$-H\"ormander singular operator if $T$
is bounded on $L^{2}$ and it admits the following representation
\begin{equation}
Tf(x)=\int_{X}K(x,y)f(y)dy\label{eq:Rep}
\end{equation}
provided that {\color{blue}$f\in\mathcal{C}_{c}$} and $x\not\in\supp f$
where $K:X\times X \setminus\left\{ (x,x)\,:\,x\in X \right\} \rightarrow\mathbb{R}$
is a locally integrable kernel satisfying the size condition $|K(x,y)|\leq \dfrac{C}{\mu(B(x,d(x,y)))}$ and the $L^{r}$-H\"ormander condition,
namely 
\begin{align}
\label{eq:Hr1}H_{r,1} & =\sup_{B}\sup_{x,z\in\frac{1}{2c_d}B}\sum_{k=1}^{\infty}\mu(2^kB)\left\langle \left(K(x,\cdot)-K(z,\cdot)\right)\chi_{2^{k}B\setminus2^{k-1}B} \right\rangle_{r,2^kB} <\infty,\\
\label{eq:Hr2}H_{r,2} & =\sup_{B}\sup_{x,z\in\frac{1}{2c_d}B}\sum_{k=1}^{\infty}\mu(2^kB)\left\langle \left(K(\cdot,x)-K(\cdot,z)\right)\chi_{2^{k}B\setminus2^{k-1}B}\right\rangle _{r,2^kB}<\infty,
\end{align}
for $r<\infty$ and
\begin{align}
	\label{eq:Hinf1}H_{\infty,1} & =\sup_{B}\sup_{x,z\in\frac{1}{2c_d}B}\sum_{k=1}^{\infty}\mu(2^kB)\Vert \left(K(x,\cdot)-K(z,\cdot)\right)\chi_{2^{k}B\setminus2^{k-1}B} \Vert_{L^\infty(2^kB)} <\infty,\\
	\label{eq:Hinf2}H_{\infty,2} & =\sup_{B}\sup_{x,z\in\frac{1}{2c_d}B}\sum_{k=1}^{\infty}\mu(2^kB)\Vert \left(K(\cdot,x)-K(\cdot,z)\right)\chi_{2^{k}B\setminus2^{k-1}B}\Vert _{L^\infty(2^kB)}<\infty,
\end{align}	
for $r=\infty$.

Note that if a Calder\'on-Zygmund operator has associated kernel $K$ satisfying the size condition and  a Dini condition in both variables, then $K$  satisfies the $L^\infty$-H\"ormander condition.

Our results for the operators defined above are the following.
\begin{thm} \label{Thm:StrongTypeLrH} Let $(X,d,\mu)$ be a space of homogeneous type.
Let $1\leq r<p<\infty$ and let $T$ be a $L^{r'}$-H\"ormander operator.  \begin{enumerate}
\item If $W\in A_{p/r}$ we have that
\begin{equation}\label{eq:AprLrH}\|T\vec{f}\|_{L^p(W)}\lesssim [W]_{A_{\frac{p}{r}}}^{\frac{1}{p}} [W^{-\frac{r}{p}(\frac{p}{r})'}]_{A^{sc}_{\infty,(\frac{p}{r})'}}^{\frac{1}{p}} [W]_{A^{sc}_{\infty,\frac{p}{r}}}^{\frac{1}{p'}}\|\vec{f}\|_{L^p(W)}
\end{equation}
\item If $1\leq q<p$ with $\frac{p}{q}>r$ and $W\in A_q$ then
\begin{equation}\label{eq:AqLrH}\|T \vec{f}\|_{L^{p}(W)}\lesssim \left(\frac{p}{rq}\right)' [W]_{A_{q}}^{\frac{1}{p}}[W]_{A^{sc}_{\infty,q}}^{\frac{1}{p'}}\|\vec{f}\|_{L^{p}(W)}\end{equation}
\end{enumerate}
\end{thm}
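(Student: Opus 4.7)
The plan is to reduce both bounds to matrix weighted estimates on the sparse forms produced by Theorem \ref{thm:ThmSparse}, which I would apply with both parameters equal to the H\"ormander index $r$, so that $s=r$ in its statement.

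To verify its two weak-type hypotheses, I would first use the H\"ormander condition \eqref{eq:Hr1}: expanding on the annuli $2^{k}B\setminus 2^{k-1}B$ and applying H\"older's inequality with exponents $r,r'$ gives the pointwise bound $\mathcal{M}^{\#}_{T,\alpha}g(x) \lesssim H_{r',1}\,M_r g(x)$, where $M_r g = (M(|g|^r))^{1/r}$. Since $M_r$ is of weak type $(r,r)$, the second hypothesis holds with $\phi(\lambda)\lesssim \lambda^{-1/r}$. For the first hypothesis, an $L^{2}$-bounded operator whose kernel satisfies the $L^{r'}$-H\"ormander condition is of weak type $(r,r)$, by a standard Calder\'on--Zygmund decomposition argument that extends routinely to spaces of homogeneous type; this yields $\psi(\lambda)\lesssim \lambda^{-1/r}$. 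Theorem \ref{thm:ThmSparse} then provides finitely many sparse families $\mathcal{S}_j\subset \mathcal{D}_j$ and kernels $k_Q$ with $\|k_Q\|_{L^{(r',\infty)}[\cdots]}\le 1$ and
\[
T\vec f(x)=c\sum_{j=1}^{m}\sum_{Q\in\mathcal{S}_j}\frac{1}{\mu(Q)}\int_{Q}\vec f(y)k_{Q}(x,y)\,d\mu(y)\chi_Q(x).
\]

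To prove \eqref{eq:AprLrH}, I would dualize $\|T\vec f\|_{L^p(W)}$ against $W^{1/p}\vec g$ with $\|\vec g\|_{L^{p'}}\le 1$, substitute the sparse representation, and then bound the resulting bilinear form by introducing reducing matrices on each $Q$ adapted to $W$ and to its $A_{p/r}$-dual $W^{-r/(p-r)}$. The weak-$L^{r'}$ bound on $k_Q$ allows one to absorb this factor into an $L^r$-average of $\vec f$, leaving a scalar bilinear sparse form that the Christ--Goldberg argument underlying Theorem \ref{thm:StrongMax} bounds by
\[
[W]_{A_{p/r}}^{1/p}\,[W^{-r/(p-r)}]_{A^{sc}_{\infty,(p/r)'}}^{1/p}\,[W]_{A^{sc}_{\infty,p/r}}^{1/p'}\,\|\vec f\|_{L^p(W)}\|\vec g\|_{L^{p'}}.
\]
For \eqref{eq:AqLrH}, with $W\in A_q$ and $q<p/r$, the same scalar sparse reduction applies but the $A^{sc}_{\infty}$ self-improvement from part (1) is replaced by a direct summation of local $A_q$-averages over the sparse family; the factor $(p/(rq))'$ emerges as the norm of the resulting geometric series.

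The main obstacle I anticipate is the passage from the matrix-valued convex body sparse form with $L^{(r',\infty)}$-bounded kernels to a workable scalar bilinear sparse form with the right reducing operators. Unlike the $L^\infty$-kernel case treated in \cite{NPTV}, the weak-$L^{r'}$ bound on $k_Q$ forces an additional H\"older-type manoeuvre that must be calibrated carefully to preserve the sharp dependence on both the matrix $A_{p/r}$ and the scalar $A^{sc}_\infty$ constants simultaneously.
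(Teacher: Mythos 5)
Your skeleton is the paper's: apply the convex body sparse theorem with $s=r$ (so kernels bounded in $L^{(r',\infty)}$ and $L^r$-averages of $\vec f$), dualize against $\vec g\in L^{p'}$, and estimate the resulting bilinear sparse form by inserting reducing matrices on each cube. Your verification of the two testing hypotheses is also fine in substance (the paper's Lemma \ref{lem:Mtr'r'} gives $\mathcal{M}^{\#}_{T,\alpha}f\lesssim M_r f$ when $K\in H_{r'}$, and the weak $(1,1)$ bound coming from $K^{*}\in H_{1}$ already implies the local testing condition with $L^{r}$-averages). The genuine gap is that the quantitative heart of the theorem is asserted rather than proved, and it is misattributed. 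In the paper the bilinear form is first split (Lemma \ref{lema3MRR}) into the product $\sup_Q|\mathcal{U}_Q\mathcal{V}_Q|_{op}\,\|M_{\mathcal{V},W^{-1/p},r}\|_{L^p\to L^p}\,\|M_{\mathcal{U},W^{1/p},1}\|_{L^{p'}\to L^{p'}}$, and then the three factors are controlled by a very specific choice: $\mathcal{V}_Q=\mathcal{A}_Q^{1/r}$ and $\mathcal{U}_Q=\mathcal{B}_Q^{1/r}$, where $\mathcal{A}_Q$, $\mathcal{B}_Q$ reduce the $L^{(\frac{p}{r})'\alpha}$- and $L^{\frac{p}{r}\beta}$-averages of $W^{-r/p}$ and $W^{r/p}$, with $\alpha,\beta$ the sharp reverse H\"older exponents attached to $[W^{-\frac{r}{p}(\frac{p}{r})'}]_{A^{sc}_{\infty,(\frac{p}{r})'}}$ and $[W]_{A^{sc}_{\infty,\frac{p}{r}}}$. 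One then needs Lemma \ref{lem:alphaFuera} to pull out the power $1/r$, Lemma \ref{lem:ApFromRH} combined with Lemma \ref{lem:RevHolderDouble} to get $\sup_Q|\mathcal{U}_Q\mathcal{V}_Q|_{op}\lesssim[W]_{A_{p/r}}^{1/p}$, and H\"older plus reverse H\"older to collapse the two auxiliary maximal operators onto unweighted $M_t$ with bumped exponents $t=r\bigl(\bigl(\frac{p}{r}\bigr)'\alpha\bigr)'$ and $t=(p\beta)'$, whose Buckley-type $L^p$ and $L^{p'}$ norms are exactly the two $A^{sc}_\infty$ factors in \eqref{eq:AprLrH}.

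Invoking "the Christ--Goldberg argument underlying Theorem \ref{thm:StrongMax}" does not supply this: that theorem bounds the different operator $M_{p,W}$ by a stopping-time argument and does not produce the three-factor constant, nor the reducing matrices adapted to the exponent $p/r$ with reverse-H\"older bumps. You yourself flag this passage as the main obstacle, and it is precisely the part left unproven. Likewise, for \eqref{eq:AqLrH} the factor $\bigl(\frac{p}{rq}\bigr)'$ does not come from a geometric series over the sparse family; it comes from $\|M_{rq}\|_{L^p\to L^p}$ after choosing $\mathcal{V}_Q=\mathcal{A}_{2c_dB}^{q/p}$ (resp.\ $\mathcal{A}_{2c_dB}^{-1/p}$ when $q=1$) and $\mathcal{U}_Q=\mathcal{V}_Q^{-1}$, with the remaining factor $[W]_{A_q}^{1/p}[W]_{A^{sc}_{\infty,q}}^{1/p'}$ produced by the $A_q$ (or $A_1$) condition together with one more reverse H\"older bump in the $\mathcal{U}$-maximal operator. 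Without these constructions and the accompanying lemmas, the claimed dependence on the constants is not justified.
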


Besides the aforementioned results we will provide some endpoint estimates and some counterparts of Coifman type inequalities. We remit the reader to Section \ref{sec:WEst} for further details and proofs of these results and the ones stated in this subsection.

\subsection{ Sparse T(1)-type theorem.}
$T(1)$ theorems were established in \cite{C,DJS}
Continuing the line of research in \cite{LMA} and \cite{LO}  we address the extension of sparse $T(1)$-type results in the setting of spaces of  homogeneous type and to the vector-valued case. More presicely our contribution is a vector-valued version of  \cite[Theorem 4.1]{LO} in the setting of spaces of homogeneous type. For this purpose, we shall say that a kernel $K:X\times X \setminus\left\{ (x,x)\,:\,x\in X \right\} \rightarrow\mathbb{R}$ is in $H_r$ if it satisfies \eqref{eq:Hr1} in the case $1<r<\infty$ or \eqref{eq:Hinf1} in the case $r=\infty$. Note that $H_1\supset H_r\supset  H_s\supset H_\infty$  for $1<r<s<\infty$. We shall denote $K^*(x,y)=K(y,x)$ and $T^{*}$ the operator associated to $K^{*}$.

\begin{thm} 
\label{thm:sparseT1}
Let $(X,d,\mu)$ be a space of homogeneous type 
and $\mathcal{D}$ a dyadic system with parameters $c_{0}$, $C_{0}$
and $\delta$. Let us fix $\alpha\geq\frac{3c_{d}^{2}}{\delta}$.
Assume that $K\in H_{r}$ for some $1<r\leq\infty$
and that $K^{*}\in H_{1}$. Suppose that there exists $c>0$ such
that for every ball $B$ and every measurable subset $E\subset B$
\begin{equation}
\int_{B}|T^{*}\chi_{E}|dx\leq c\mu(B).\label{eq:HipT1}
\end{equation}
Then, for every boundedly supported function $\vec{f}$ such that $|\vec{f}|\in L^{r}(X)$,
there exists a sparse family $\mathcal{S}\subset \mathcal{D}$ such that for a.e. $x\in X$
\[
T\vec{f}(x)\in c_{n,d}c_{T}\sum_{Q\in\mathcal{S}}\langle\langle\vec{f}\rangle\rangle_{r',\alpha Q}\chi_{Q}(x).
\]
In other words we have that
\[
T\vec{f}(x)=c_{n,d}c_{T}\sum_{Q\in\mathcal{S}}\frac{1}{\mu(\alpha Q)}\int_{\alpha Q}\vec{f}(y)k_{Q}(x,y)d\mu(y)\chi_{Q}(x)
\]
where
\[
\left\Vert k_{Q}\right\Vert _{L^{(r,\infty)}\left[(Q,\frac{\mu}{\mu(Q)}),(\alpha Q,\mu)\right]}\leq1.
\]
Furthermore, there exist $0<c_{0}\leq C_{0}<\infty$,
$0<\delta<1$, $\gamma\geq1$ and $m\in\mathbb{N}$ such that there
are dyadic systems $\mathcal{D}_{1},\dots,\mathcal{D}_{m}$ with parameters
$c_{0},C_{0}$ and $\delta$ and $m$ sparse families $\mathcal{S}_{i}\subset\mathcal{D}_{i}$
such that 
\[
T\vec{f}(x)=c_{n,d}c_{T}\sum_{j=1}^{m}\sum_{Q\in\mathcal{S}_{j}}\frac{1}{\mu(Q)}\int_{Q}\vec{f}(y)k_{Q}(x,y)d\mu(y)\chi_{Q}(x)
\]
where
\[
\left\Vert k_{Q}\right\Vert _{L^{(r,\infty)}\left[(Q,\frac{\mu}{\mu(Q)}),(\alpha Q,\mu)\right]}\leq1.
\]
\end{thm}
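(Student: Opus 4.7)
The approach is to verify the two weak-type hypotheses of Theorem~\ref{thm:ThmSparse} for the scalar operator $T$, with the choice of indices $q=1$ and $r_{\mathrm{sp}}=r'$ so that $s=\max(1,r')=r'$ matches the exponent appearing in the desired convex-body domination.

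The first ingredient is a local weak type $(1,1)$ bound for $T$: for every ball $B$ and every boundedly supported scalar $g$,
\[
\mu\bigl(\{x\in B:|T(g\chi_{B})(x)|>t\}\bigr)\lesssim \frac{\|g\chi_{B}\|_{L^{1}}}{t},\qquad t>0.
\]
Perform a Calderón--Zygmund decomposition $g\chi_{B}=h+\sum_{j}b_{j}$ at a suitable multiple of $\langle g\rangle_{1,B}$, so that each $b_{j}$ is supported in a ball $B_{j}$ with $\int b_{j}\,d\mu=0$. The tail of each $Tb_{j}$ outside $\alpha B_{j}$ is controlled by the standard mean-zero plus Hörmander argument using $H_{1,2}$, which is precisely the hypothesis $K^{*}\in H_{1}$. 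For the good part we use duality: writing
\[
\int_{B}|Th|=\sup\Bigl\{\int h\cdot T^{*}\phi\,d\mu:\|\phi\|_{\infty}\leq 1,\ \supp\phi\subset B\Bigr\}
\]
and expanding each such $\phi$ via a layer-cake decomposition into signed integrals of characteristic functions $\chi_{E}$ with $E\subset B$, hypothesis \eqref{eq:HipT1} applied to each level set gives $\int_{B}|T^{*}\phi|\,d\mu\lesssim\mu(B)$. Combined with $\|h\|_{L^{\infty}}\lesssim\langle g\rangle_{1,B}$, this yields $\int_{B}|Th|\lesssim\|g\chi_{B}\|_{L^{1}}$, and Chebyshev closes the argument. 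Reformulating in the form required by Theorem~\ref{thm:ThmSparse} gives the first weak-type hypothesis with $q=1$ and $\psi(\lambda)\lesssim\lambda^{-1}$.

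The second ingredient is a pointwise bound for $\mathcal{M}_{T,\alpha}^{\#}$ on $B$:
\[
\mathcal{M}_{T,\alpha}^{\#}(g\chi_{B})(x)\lesssim H_{r,1}\langle g\rangle_{r',B},\qquad x\in B,
\]
which in particular satisfies the second weak-type hypothesis of Theorem~\ref{thm:ThmSparse} with $r_{\mathrm{sp}}=r'$ and $\phi$ constant. To prove it, fix a ball $B'\ni x$; if $\alpha B'\supset B$ the truncation $g\chi_{B}\chi_{X\setminus\alpha B'}$ vanishes, and otherwise we set $B''=2c_{d}B'$, split $X\setminus\alpha B'$ into the dyadic annuli $2^{k}B''\setminus 2^{k-1}B''$ around $B''$, and apply Hölder with exponents $(r,r')$ on each annulus. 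The factor $\|g\chi_{B}\chi_{2^{k}B''}\|_{L^{r'}}\leq\mu(B)^{1/r'}\langle g\rangle_{r',B}$ is uniform in $k$; only a bounded range of $k$ contributes, since for $k$ small the annulus misses $B$ and for $k$ large $2^{k-1}B''\supset B$ makes its intersection with $B$ empty. On that range doubling yields $\mu(B)^{1/r'}\lesssim\mu(2^{k}B'')^{1/r'}$, and summing the resulting bounds while invoking $K\in H_{r}$ produces the claimed pointwise estimate.

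With both ingredients at hand, Theorem~\ref{thm:ThmSparse} yields the sparse family $\mathcal{S}\subset\mathcal{D}$, the convex-body inclusion, and the kernel reformulation with the stated mixed-norm bound on $k_{Q}$. The final passage to a finite collection of dyadic systems $\mathcal{D}_{1},\dots,\mathcal{D}_{m}$ and sparse families $\mathcal{S}_{j}\subset\mathcal{D}_{j}$ removing the dilation factor $\alpha$ is inherited verbatim from the same theorem. The main obstacle is the weak $(1,1)$ step: without an a priori $L^{2}$-bound on $T$, the standard Calderón--Zygmund good-part estimate must be replaced by the layer-cake/duality trick that converts hypothesis \eqref{eq:HipT1} into an $L^{1}$-control on $Th$, and the doubling and quasi-metric constants of $(X,d,\mu)$ need to be tracked carefully throughout.
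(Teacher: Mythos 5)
Your overall reduction is the same as the paper's: verify the two weak-type hypotheses of Theorem \ref{thm:ThmSparse} with $q=1$ and second exponent $r'$ (so $s=r'$), using the testing condition \eqref{eq:HipT1} through duality for the local bound on $T$, the condition $K^{*}\in H_{1}$ for the mean-zero tail estimate, and $K\in H_{r}$ for $\mathcal{M}^{\#}_{T,\alpha}$. However, both of the key local estimates you rely on are overclaimed, and the arguments you sketch for them do not go through. For the first: the local weak $(1,1)$ bound is not established by your Calderón--Zygmund decomposition at height $\simeq\langle g\rangle_{1,B}$. With that height the good part indeed satisfies $\int_{B}|Th|\lesssim\|h\|_{L^{\infty}}\mu(B)\lesssim\|g\chi_{B}\|_{L^{1}}$, but you never account for the exceptional set $\bigcup_{j}\alpha B_{j}$: its measure is only $\lesssim\sum_{j}\mu(B_{j})\lesssim\mu(B)$, not $\lesssim\|g\chi_{B}\|_{L^{1}}/t$, so in the only relevant regime $t\gg\langle g\rangle_{1,B}$ you cannot reach $\lambda\mu(B)$ with small $\lambda$. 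Decomposing at height $\simeq t$ instead repairs the exceptional set but destroys the good-part estimate, because without an a priori $L^{2}$ bound the only available tool is the $L^{\infty}$-type estimate $\int_{B}|Th|\lesssim\|h\|_{L^{\infty}}\mu(B)$, which after Chebyshev at level $t$ gives merely $\mu(B)$. The paper resolves exactly this tension in Lemma \ref{lem:Let:Aux}(2) by decomposing at the intermediate height $\alpha^{\gamma/(1+\gamma)}\langle|f|\rangle_{B}$ (here $\gamma=1$), obtaining only $\psi(\lambda)\simeq\lambda^{-2}$ instead of your claimed $\psi(\lambda)\lesssim\lambda^{-1}$; that weaker $\psi$ is all Theorem \ref{thm:ThmSparse} requires.

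The second ingredient is also incorrect as stated: the pointwise bound $\mathcal{M}^{\#}_{T,\alpha}(g\chi_{B})(x)\lesssim H_{r,1}\langle g\rangle_{r',B}$ for all $x\in B$ fails in general. When the ball $B'\ni x$ has radius much smaller than that of $B$, unboundedly many annuli $2^{k}B''\setminus 2^{k-1}B''$ meet $B$, and for the small ones $\mu(2^{k}B'')\ll\mu(B)$, so neither ``only a bounded range of $k$ contributes'' nor ``$\mu(B)^{1/r'}\lesssim\mu(2^{k}B'')^{1/r'}$ on that range'' holds; already for the Hilbert transform with $g$ an $L^{r'}$-normalized bump and $x$ close to (but outside) its support, a small ball $B'$ around $x$ with $\alpha B'$ missing the bump makes the oscillation of $T(g\chi_{X\setminus\alpha B'})$ on $B'$ arbitrarily large while $\langle g\rangle_{r',B}$ stays bounded. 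What the annuli/Hölder/Hörmander argument actually yields is $\mathcal{M}^{\#}_{T,\alpha}(g\chi_{B})(x)\lesssim M_{r'}(g\chi_{B})(x)$, which is the paper's Lemma \ref{lem:Mtr'r'}; one must then invoke the weak $(r',r')$ bound for $M_{r'}$ to get the second hypothesis of Theorem \ref{thm:ThmSparse} with a nonconstant $\phi(\lambda)\simeq\lambda^{-1/r'}$, rather than with $\phi$ constant. Both defects are repairable, and the repaired argument is precisely the paper's proof; but as written, the two estimates on which your proposal rests are not proved, and the second is false at the claimed strength.
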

The remainder of the paper is organized as follows. In Section \ref{sec:prelim} we gather some preliminaries on dyadic structures and matrix $A_p$ weights. Section \ref{sec:WEst} is devoted to provide weighted estimates. The proof of Theorem \ref{thm:sparseT1} is addressed in Section \ref{sec:SparseT1proof}. In Section \ref{sec:SparseProof}, Theorem \ref{thm:ThmSparse} is settled. We end this paper with Section  \ref{sec:PetOp}, which is devoted to revisit Haar shifts and the Petermichl operator.

\section{Preliminaries}\label{sec:prelim}
\subsection{Dyadic structures}\label{sec:dyadicSt}

We will borrow the presentation and the notation from \cite{L}. Fix
$0<c_{0}\leq C_{0}<\infty$ and $\delta\in(0,1)$. Suppose that for
$k\in\mathbb{Z}$ we have an index set $J_{k}$ and a pairwise disjoint
collection $\mathcal{D}=\{Q_{j}^{k}\}_{j\in J_{k}}$ of measurable
sets and a collection of points $\{z_{j}^{k}\}_{j\in J_{k}}$. We
say that $\mathcal{D}=\bigcup_{k\in\mathbb{Z}}\mathcal{D}_{k}$ is
a dyadic system with parameters $c_{0},C_{0}$ and $\delta$ if it
satisfies the following properties.
\begin{enumerate}
\item For every $k\in\mathbb{Z}$ 
\[
X=\bigcup_{j\in J_{k}}Q_{j}^{k}.
\]
\item For $k\geq l$ if $P\in\mathcal{D}_{k}$ and $Q\in\mathcal{D}_{l}$
then either $Q\cap P=\emptyset$ or $P\subseteq Q$.
\item For each $k\in\mathbb{Z}$ and $j\in J_{k}$ 
\[
B(z_{j}^{k},c_{0}\delta^{k})\subseteq Q_{j}^{k}\subseteq B(z_{j}^{k},C_{0}\delta^{k}).
\]
\end{enumerate}
We will call the elements of $\mathcal{D}$ cubes and we shall denote
\[
\mathcal{D}(Q):=\left\{ P\in\mathcal{D}:P\subseteq Q\right\} 
\]
the family of cubes of $\mathcal{D}$ contained in $Q$. We will say
that an estimate depends on $\mathcal{D}$ if it depends on the parameters
$c_{0}$, $C_{0}$ and $\delta$. 

We may regard $z_{j}^{k}$ as the ``center'' and $\delta^{k}$ as
the ``side length'' of a cube $Q_{j}^{k}\in\mathcal{D}_{k}$. These
need to be with respect a certain $k\in\mathbb{Z}$ since $k$ may
not be unique. Then a cube $Q$ also encodes the information of its
center $z$ and generation $k$. Since the structure of dyadic cubes
can be very messy in spaces of homogeneous type, those cubes do not
have a canonical definition and hence we shall define the dilations
$\alpha Q$ for $\alpha\geq1$ of $Q\in\mathcal{D}$ as 
\[
\alpha Q:=B(z_{j}^{k},\alpha C_{0}\delta^{k}).
\]
We shall abuse of this dilation notation and denote 
\[
1Q:=B(z_{j}^{k},C_{0}\delta^{k}).
\]
 Note that these dilations are actually balls.

We summarize in the following statement some properties of subsets in homogeneous spaces, particularly those of dyadic balls and cubes (see \cite{HK}).

\begin{lem}
\label{lemaResumen} Let $(X,\rho,\mu)$ be a space of homogeneous type and $\mathcal{D}$ a dyadic system with parameters $c_{0},C_{0}$ and $\delta$. 

\begin{enumerate}
\item 
If $Q_{\beta}^{k+1}\subseteq Q_{\alpha}^{k}$ and $z_{\beta}^{k+1}$, $z_{\alpha}^{k}$ are their respective centers, then $\rho(z_{\beta}^{k+1}, z_{\alpha}^{k}) < C_{0} \delta^k $
\item 

Let $c_{\mu}$ be the doubling constant and $x$ in $X$. If $\nu=log_2(c_{\mu})$ and $0<r\leq R$, then
\[
\mu(B(x,R))\leq \left(\frac{R}{r}\right)^{\nu} \mu(B(x,r)).
\]
\item 
If $x\in B(x',R)$ then, for $0<r\leq2c_{\rho}R$, we get that 
\[
\mu(B(x',R))\leq\left(\dfrac{2c_{\rho}R}{r}\right)^{log_{2}(c_{\mu})}\mu(B(x,r)).
\]
\item 
If $Q_{\beta}^{k}\subseteq Q_{\alpha}^{l}$ and $z_{\beta}^{k}$, $z_{\alpha}^{l}$ are their respective centers, then $\rho(z_{\beta}^{k}, z_{\alpha}^{l}) < C_{1} \delta^k $, where $C_{1} = 2c_{\rho}C_{0}$.

\end{enumerate}
\end{lem}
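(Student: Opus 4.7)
The four statements are standard folklore consequences of the dyadic structure together with the doubling and quasi-triangle properties of $(X,\rho,\mu)$, so the plan is simply to unpack the definitions. I expect no genuine obstacle; the only step requiring a bit of bookkeeping is the doubling iteration in item (2), where care is needed to absorb multiplicative constants into the stated power $\nu=\log_2 c_\mu$.

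For items (1) and (4), which both concern the distance between centers of nested dyadic cubes, the argument is the same. Property (3) of the dyadic system guarantees $Q_\alpha^k\subseteq B(z_\alpha^k,C_0\delta^k)$, and since the center $z_\beta^{k+1}$ of the smaller cube lies inside $Q_\alpha^k$, we read off $\rho(z_\beta^{k+1},z_\alpha^k)<C_0\delta^k$ at once, giving item (1). Item (4) is the iterated version: one has $z_\beta^k\in Q_\beta^k\subseteq Q_\alpha^l\subseteq B(z_\alpha^l,C_0\delta^l)$, and the quasi-triangle inequality applied along a telescoping chain of generations $l,l+1,\dots,k$ (using item (1) between consecutive levels) produces exactly the factor $2c_\rho$ absorbed into $C_1=2c_\rho C_0$.

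For item (2) I iterate the doubling condition $\mu(B(x,2\rho))\le c_\mu\mu(B(x,\rho))$. Choosing the least $m\in\mathbb{N}$ with $2^m r\ge R$ yields $\mu(B(x,R))\le\mu(B(x,2^m r))\le c_\mu^{m}\mu(B(x,r))$, and writing $c_\mu=2^\nu$ together with $2^m\le 2R/r$ translates the right-hand side into the stated $(R/r)^\nu$ power, up to an explicit multiplicative constant depending only on $c_\mu$. Item (3) then combines item (2) with the quasi-triangle inequality: since $x\in B(x',R)$ gives $\rho(x,x')<R$, any $y\in B(x',R)$ satisfies $\rho(x,y)\le c_\rho(\rho(x,x')+\rho(x',y))<2c_\rho R$, hence $B(x',R)\subseteq B(x,2c_\rho R)$; applying item (2) with $R$ replaced by $2c_\rho R$ and using the hypothesis $r\le 2c_\rho R$ delivers the bound $\mu(B(x',R))\le(2c_\rho R/r)^\nu\mu(B(x,r))$.
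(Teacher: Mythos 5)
The paper itself offers no proof of this lemma (it is simply quoted from \cite{HK}), so your argument has to stand on its own. Items (1)--(3) are in substance the standard argument and are fine as far as they go: (1) is exactly the one-line containment $z_\beta^{k+1}\in Q_\beta^{k+1}\subseteq Q_\alpha^k\subseteq B(z_\alpha^k,C_0\delta^k)$, and the doubling iteration for (2)--(3) is the usual one. Be aware, though, that your iteration only yields $\mu(B(x,R))\le c_\mu\,(R/r)^{\nu}\mu(B(x,r))$ (as you concede), and the extra constant cannot be dropped: for counting measure on $\mathbb{Z}$ the optimal doubling constant is $c_\mu=3$, and with $r=1$, $R=5/2$ one has $\mu(B(0,R))=5>(5/2)^{\log_2 3}\approx 4.27$, so the constant-free form printed in (2) is really the ``safe'' form $(2R/r)^{\nu}$ in disguise. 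Consequently, deriving (3) by plugging $2c_\rho R$ into the literal (2), as you do, inherits that extra constant rather than giving the stated $(2c_\rho R/r)^{\nu}$; the whole point of the factor $2c_\rho$ in (3) is to absorb the rounding $\lceil\log_2 t\rceil\le\log_2(2t)$ coming from the iteration, and your write-up does not track this.

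The genuine gap is item (4). You assert that a telescoping chain through generations $l,l+1,\dots,k$, using (1) at each step together with the quasi-triangle inequality, ``produces exactly the factor $2c_\rho$.'' It does not: iterating the quasi-triangle inequality over $k-l$ steps compounds the constant geometrically, giving roughly $\sum_{j=l}^{k-1}c_\rho^{\,j-l+1}C_0\delta^{\,j}$, which is neither a single factor $2c_\rho$ nor of size $\delta^k$, and need not even be bounded by a multiple of $\delta^l$ unless $c_\rho\delta<1$. More importantly, you never confront the scale appearing in the statement: your own first containment $z_\beta^k\in Q_\alpha^l\subseteq B(z_\alpha^l,C_0\delta^l)$ gives $\rho(z_\beta^k,z_\alpha^l)<C_0\delta^{l}$, a bound at the scale of the \emph{containing} cube, whereas the lemma as printed claims $C_1\delta^{k}$, the scale of the contained cube. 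That printed bound cannot be proved, because it fails already for the ordinary dyadic intervals on $\mathbb{R}$: take $\delta=1/2$, $c_\rho=1$, $c_0=1/2$, $C_0=3/5$, and $Q_\beta^{2}=[3/4,1)\subseteq Q_\alpha^{0}=[0,1)$; the centers are $7/8$ and $1/2$, at distance $3/8$, while $2c_\rho C_0\delta^{2}=3/10$. So either the exponent $k$ is a typo for $l$ (in which case the one-line argument of item (1) already gives the stronger bound $C_0\delta^{l}$ and no chain, and no factor $2c_\rho$, is needed), or something else is intended; in either case your claim that the telescoping delivers exactly $C_1=2c_\rho C_0\delta^k$ is not a proof, and the right move was to prove the $\delta^{\min(k,l)}$ version directly and flag the discrepancy in the statement.
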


The following propoosition which is settled in \cite{HK} ensures
the existence of dyadic systems that provide a suitable replacement
for the translations of the usual dyadic systems in euclidean spaces.
\begin{prop}
\label{proposition:dyadicsystem} Let $(X,d,\mu)$ be a space of homogeneous
type. There exist $0<c_{0}\leq C_{0}<\infty$, $\gamma\geq1$, $0<\delta<1$
and $m\in\mathbb{N}$ such that there are dyadic systems $\mathcal{D}_{1},\dots,\mathcal{D}_{m}$
with parameters $c_{0}$, $C_{0}$ and $\delta$, and with the property
that for each $s\in X$ and $\rho>0$ there is a $j\in\left\{ 1,\cdots,m\right\} $
and a $Q\in\mathcal{D}_{j}$ such that 
\[
B(s,\rho)\subseteq Q,\qquad\text{and}\qquad\text{diam}(Q)\leq\gamma\rho.
\]
\end{prop}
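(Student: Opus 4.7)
The plan is to reproduce the Hytönen–Kairema construction by first building a single dyadic system via a Christ-type argument, and then producing the finite family $\mathcal{D}_1,\dots,\mathcal{D}_m$ by an adjacent-systems / randomization technique. Throughout, the parameter $\delta\in(0,1)$ will be chosen small enough, depending on $c_d$, so that all geometric estimates close up.

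First I would fix $\delta$ and, for each $k\in\mathbb{Z}$, choose a maximal $\delta^{k}$-separated set of centers $\{z_j^k\}_{j\in J_k}$; by maximality the balls $B(z_j^k,\delta^k/2)$ are pairwise disjoint while the balls $B(z_j^k,\delta^k)$ cover $X$. For each $k$ I would then define a parent map $\pi_k\colon J_{k+1}\to J_k$ by selecting, for every $\beta\in J_{k+1}$, an index $\pi_k(\beta)=\alpha\in J_k$ with $d(z_\beta^{k+1},z_\alpha^{k})$ minimal; the resulting forest structure on $\bigsqcup_k J_k$ lets me define the cube $Q_\alpha^k$ as the union over all descendants of $\alpha$ at all finer levels of sets $B(z_\gamma^\ell,c_0\delta^\ell)$ (together with a suitable tie-breaking rule to ensure the $Q_\alpha^k$ partition $X$ at each level). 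Properties (1) and (2) in the definition of dyadic system are then immediate, while (3), namely $B(z_\alpha^k,c_0\delta^k)\subseteq Q_\alpha^k\subseteq B(z_\alpha^k,C_0\delta^k)$, follows from a geometric series argument along descent chains of the tree, using the quasi-triangle inequality $c_d$ times and requiring $\delta$ to be small enough that $\sum_{\ell\geq 0}(c_d\delta)^\ell$ is controlled.

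To obtain the covering statement of the proposition I would introduce the finite family of adjacent systems. The idea is that different choices of the parent map $\pi_k$ yield combinatorially distinct dyadic systems, and by Lemma \ref{lemaResumen} (2)--(3) the number of essentially different ``shifts'' of the center-point configuration at each scale is bounded by a constant depending only on $c_d$ and $c_\mu$. Choosing representatives of these shift classes one obtains dyadic systems $\mathcal{D}_1,\dots,\mathcal{D}_m$ with the same parameters $c_0,C_0,\delta$. A pigeonhole/volume argument, again relying on the doubling property, then shows that for every ball $B(s,\rho)$ there exists $j\in\{1,\dots,m\}$ and a cube $Q\in\mathcal{D}_j$ whose center $z$ lies within $c_0\delta^k/(2c_d)$ of $s$ at the unique generation $k$ with $\delta^{k+1}<\rho\leq \delta^k$; this yields both $B(s,\rho)\subseteq Q$ and $\mathrm{diam}(Q)\leq \gamma\rho$ with $\gamma$ depending only on $c_d,c_\mu,\delta$.

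The principal obstacle is the second step: engineering the shifts so that for \emph{every} location $(s,\rho)$ at least one system avoids the pathological case in which $s$ lies near the boundary of every cube of generation $k$. This is precisely where the adjacent-systems construction of Hytönen--Kairema is needed, and it is the reason a single dyadic system (which is what Christ originally produced) does not suffice. The first step, by contrast, is essentially bookkeeping once $\delta$ is small enough relative to $c_d$.
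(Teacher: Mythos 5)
There is a genuine gap, and it is worth noting first that the paper itself does not prove this proposition at all: it is imported verbatim from Hytönen--Kairema \cite{HK}, so the only thing to compare your proposal against is that cited construction. Your first step (a single Christ-type system from maximal $\delta^k$-separated centers, a parent map, cubes as unions of descendants, with property (3) from a geometric series in $c_d\delta$) is a fair outline of the single-system construction. The problem is the second step, which is the entire content of the proposition. Varying the parent map $\pi_k$ does not produce the adjacent systems: reattaching children to different parents does not move the boundary of the cube near a given point $s$ at the relevant generation in any controlled way, and the assertion that Lemma \ref{lemaResumen} (2)--(3) bounds ``the number of essentially different shifts of the center-point configuration'' is not an argument --- those are volume comparisons and say nothing about combinatorial shift classes. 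Likewise, the pigeonhole you describe (some system has a cube at generation $k$, $\delta^{k+1}<\rho\le\delta^k$, whose center lies within $c_0\delta^k/(2c_d)$ of $s$) does not yield $B(s,\rho)\subseteq Q$: already in a \emph{single} system maximality of the separated set gives a center within $\delta^k$ of $s$, and yet the containment fails exactly when $s$ is close to a cube boundary at scale $\rho$; closeness of $s$ to a center controls neither the distance from $s$ to the complement of $Q$ nor the position of the boundary. In \cite{HK} the adjacent systems are obtained by finitely many explicit choices of \emph{new center points} (a selection/translation parameter taking boundedly many values, with $m$ controlled by the geometric doubling constant), together with a genuinely nontrivial proof that for every $(s,\rho)$ at least one choice places $B(s,\rho)$ inside the inner ball of a cube of comparable diameter.

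In short, the step you yourself label ``the principal obstacle'' --- ensuring that for every $(s,\rho)$ some system avoids the boundary pathology --- is precisely the theorem to be proved, and your proposal defers it back to Hytönen--Kairema rather than proving it; the mechanism you sketch in its place (parent-map variation, shift classes, pigeonhole on centers) would not close. As a standalone proof the proposal therefore does not establish the proposition, although as a road map for reading \cite{HK} the first half is reasonable.
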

We borrow from \cite{L} the following covering Lemma.
\begin{lem}
\label{lemma:covering} Let $(X,d,\mu)$ be a space of homogeneous
type and $\mathcal{D}$ a dyadic system with parameters $c_{0}$,
$C_{0}$ and $\delta$. Suppose that $\text{diam}(X)=\infty$, take
$\alpha\geq3c_{d}^{2}/\delta$ and let $E\subseteq X$ satisfy $0<\text{diam}(E)<\infty$.
Then there exists a partition $\mathcal{P}\subseteq\mathcal{D}$ of
$X$ such that $E\subseteq\alpha Q$ for all $Q\in\mathcal{P}$. 
\end{lem}
We end up this section recalling the definition of sparse an Carleson families and the connection between those properties.
\begin{defn} We say that $\mathcal{S}$ is a $\eta$-sparse family for $0 < \eta < 1$ if for every $Q \in \mathcal{S}$ there exists a measurable set $E_Q \subset Q$ such that $\mu(E_Q) \geq \eta \mu(Q)$ and the sets $\{E_Q\}_{Q \in \mathcal{S}}$ are pairwise disjoint.
\end{defn}

\begin{defn} We say that $\mathcal{S}$ is $\Lambda$-Carleson for $\Lambda > 1$, if for every cube $Q \in \mathcal{S}$,
\[
\sum_{\substack{P \in \mathcal{S} \\ P \subset Q}} \mu(P) \leq \Lambda \mu(Q).
\]
\end{defn}

We borrow from \cite{DGKLWY} the following equivalence.

\begin{prop}
		Given \(0 < \eta < 1\) and a collection \(\mathcal{S} \subset \mathcal{D}\) of dyadic cubes, the following statements hold.
		\begin{itemize}
			\item If \(\mathcal{S}\) is \(\eta\)-sparse, then \(\mathcal{S}\) is \(\frac{1}{\eta}\)-Carleson.
			\item If \(\mathcal{S}\) is \(\frac{1}{\eta}\)-Carleson, then \(\mathcal{S}\) is \(\eta\)-sparse.
		\end{itemize}	
\end{prop}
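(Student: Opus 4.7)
The plan is to handle the two implications separately; the forward direction is routine, while the backward direction carries the real content.

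For $\eta$-sparse implies $\frac{1}{\eta}$-Carleson, fix $Q\in\mathcal{S}$ and let $\{E_P\}_{P\in\mathcal{S}}$ witness the sparsity. The family $\{E_P : P\in\mathcal{S},\,P\subseteq Q\}$ consists of pairwise disjoint subsets of $Q$, so
$$\eta\sum_{\substack{P\in\mathcal{S}\\ P\subseteq Q}}\mu(P)\leq \sum_{\substack{P\in\mathcal{S}\\ P\subseteq Q}}\mu(E_P)\leq \mu(Q),$$
which is the desired Carleson inequality after dividing by $\eta$.

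For $\frac{1}{\eta}$-Carleson implies $\eta$-sparse, I would exploit the tree structure of $\mathcal{S}$. For each $P\in\mathcal{S}$ let $\mathrm{ch}_{\mathcal{S}}(P)$ denote the maximal elements of $\{P'\in\mathcal{S}:P'\subsetneq P\}$ and set $F_P:=P\setminus\bigcup_{P'\in\mathrm{ch}_{\mathcal{S}}(P)}P'$. The $F_P$ are pairwise disjoint, and each $Q\in\mathcal{S}$ is partitioned (up to a null set) by $\{F_P:P\in\mathcal{S},\,P\subseteq Q\}$, so $\sum_{P\subseteq Q}\mu(F_P)=\mu(Q)$. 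I would then distribute the mass of the $F_P$'s among the $\mathcal{S}$-ancestors $Q\supseteq P$ via a measure-theoretic Hall/marriage argument, producing pairwise disjoint measurable sets $E_Q\subseteq Q$ with $\mu(E_Q)\geq\eta\mu(Q)$. The Hall feasibility condition to verify is that for every finite subfamily $\mathcal{T}\subseteq\{P\in\mathcal{S}:P\subseteq Q\}$,
$$\mu\Bigl(\bigcup_{T\in\mathcal{T}}T\Bigr)\geq \eta\sum_{T\in\mathcal{T}}\mu(T),$$
and this follows from Carleson after passing to the maximal elements $\mathcal{T}^*$ of $\mathcal{T}$: by disjointness $\mu(\bigcup\mathcal{T})=\sum_{T^*}\mu(T^*)$, while grouping each $T\in\mathcal{T}$ with the unique $T^*\in\mathcal{T}^*$ containing it and applying Carleson inside each such $T^*$ yields $\sum_{T\in\mathcal{T}}\mu(T)\leq \frac{1}{\eta}\sum_{T^*}\mu(T^*)$.

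The main obstacle is converting the Hall condition into an actual measurable assignment of disjoint $E_Q$'s. I would resolve this either by invoking a marriage theorem on $\sigma$-finite spaces or, exploiting the dyadic structure, by a direct top-down greedy construction: process the cubes of $\mathcal{S}$ from largest to smallest and at each step allocate a measurable subset of $Q$ of measure $\eta\mu(Q)$ from the portion of $Q$ not yet claimed by strict $\mathcal{S}$-ancestors, using the recursive Hall inequality above to certify inductively that enough unclaimed mass always remains inside each child cube.
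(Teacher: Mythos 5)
The paper itself does not prove this proposition; it is quoted from \cite{DGKLWY}, so your attempt can only be measured against the standard proofs in the literature (Lerner--Nazarov, H\"anninen), which is in fact the route you chose. Your first bullet is complete and correct, and is exactly the standard argument: the sets $E_P$ with $P\subseteq Q$ are pairwise disjoint subsets of $Q$, so $\eta\sum_{P\subseteq Q}\mu(P)\le\mu(Q)$. For the second bullet you correctly identify the known strategy (this is essentially H\"anninen's proof via a measure-theoretic marriage theorem), and your verification of the Hall-type condition from the Carleson hypothesis --- pass to the maximal cubes of a finite subfamily, use their disjointness for the union and the Carleson packing inside each maximal cube for the sum --- is correct; note only that the condition must be checked for arbitrary finite subfamilies of $\mathcal{S}$, not just those sitting below one fixed $Q$, but the identical argument gives this.

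The gap is in the step you yourself flag: converting the Hall condition into an actual disjoint measurable family $\{E_Q\}$. If you invoke the Bollob\'as--Varopoulos measurable marriage theorem this is legitimate (modulo the standing assumption that $\mu$ is non-atomic, without which both that theorem and the proposition itself break down: two nested cubes whose mass is a single atom are $2$-Carleson but not $\tfrac12$-sparse), but then the heart of the proof is a cited black box rather than an argument. Your fallback, the ``top-down greedy'' allocation, does not work as described: when you process a cube $Q'$ you cannot allocate $\eta\mu(Q')$ arbitrarily, because you may dump it inside a descendant $Q\in\mathcal{S}$ and destroy its budget. Concretely, if $Q\subsetneq Q'$ with $\mu(Q')=2$, $\mu(Q)=1.9$ and $\eta=\tfrac12$, the pair is $2$-Carleson, yet if $E_{Q'}$ (of measure $1$) is placed inside $Q$, the unclaimed part of $Q$ has measure $0.9<\eta\mu(Q)=0.95$. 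So the greedy step must be accompanied by an invariant guaranteeing that after each allocation the Hall inequality still holds for \emph{all} finite subfamilies of unprocessed cubes, and one must also handle families with no maximal cubes by a limiting argument; maintaining that invariant is precisely the content of the marriage theorem, not a routine induction, and it is the piece your proposal leaves unproved.
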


\subsection{The $A_{p}$ condition}
\subsubsection{Reducing matrices}
 If we fix a ball $B \subseteq X$ with $(X,d,\mu)$ a space of homogeneous type 
and a matrix weight $W$, for any $1\leq p<\infty$, 
\[
\rho(\vec{e})=\left(\frac{1}{|B|}\int_{B}|W^{\frac{1}{p}}(x)\vec{e}|^{p}d\mu(x)\right)^{\frac{1}{p}}
\]
defines a norm over $\mathbb{C}^{n}$. Hence following \cite[Section 1]{V}, we have that there exists a positive selfadjoint operator $\mathcal{W}_{B,p}: \mathbb{C}^{n} \longrightarrow \mathbb{C}^{n}$  such that 
\[
\{\vec{e}\,:\,|\mathcal{W}_{B,p}\vec{e}|\leq1\}\subset\{\vec{e}\,:\,\rho(\vec{e})\leq1\}\subset C\{\vec{e}\,:\,|\mathcal{W}_{B,p}\vec{e}|\leq1\}
\]
where the set $\{\vec{e}\,:\,|\mathcal{W}_{B,p}\vec{e}|\leq1\}$ is the largest elipsoid contained in the unit ball $\{\vec{e}\,:\,\rho(\vec{e})\leq1\}$ and $C$ is a constant that depend on the dimension $n$.
Therefore
\[
\rho(\vec{e})\simeq|\mathcal{W}_{B,p}\vec{e}|.
\]
Analogously, in the case ${p}>1$, since 
\[\rho^{*}(\vec{e}) = 
\left(\frac{1}{|B|}\int_{B}|W^{-\frac{1}{p}}(x)\vec{e}|^{p'}d\mu(x)\right)^{\frac{1}{p'}},
\]
defines a norm over $\mathbb{C}^n$, there exists some matrix $\mathcal{W}'_{B,p}$ which is a positive
selfadjoint operator as well such that 
\begin{equation}\label{modulo rho estrella}
\rho^{*}(\vec{e})\simeq|\mathcal{W}'_{B,p}\vec{e}|.
\end{equation}
Note that these matrices are particularly useful when dealing with
$A_{p}$ weights since, via trace norm, they allow to rewrite the
conditions in a way that turns out to be quite useful for applications. In fact, since for positive definite self-adjoint  matrices, $A$ and $B$, 
\begin{equation}\label{conmutar}
\left|AB\right|_{op} = \left|BA\right|_{op},
\end{equation} then  
 if $\{\vec{e_{j}}: j=1,...,n\}$ is an ortonormal basis in $\mathbb{C}^{n}$,
\begin{align}\label{cte Ap via matrix}
[W]_{A_{p}} & =\sup_{B}\frac{1}{\mu(B)}\int_{B}\left(\frac{1}{\mu(B)}\int_{B}|W^{-\frac{1}{p}}(y)W^{\frac{1}{p}}(x)|_{op}^{p'}d\mu(y)\right)^{\frac{p}{p'}}d\mu(x)\nonumber \\
& \simeq\sup_{B}\frac{1}{\mu(B)}\int_{B}\left(\frac{1}{\mu(B)}\int_{B}\left(\sum_{j=1}^{n}|W^{-\frac{1}{p}}(y)W^{\frac{1}{p}}(x)\vec{e}_{j}|\right)^{p'}d\mu(y)\right)^{\frac{p}{p'}}d\mu(x)\nonumber \\
& \simeq\sup_{B}\frac{1}{\mu(B)}\int_{B}\left(\sum_{j=1}^{n}|\mathcal{W}'_{B,p}W^{\frac{1}{p}}(x)\vec{e}_{j}|\right)^{p}d\mu(x)\simeq\sup_{B}\frac{1}{\mu(B)}\int_{B}\left(|\mathcal{W}'_{B,p}W^{\frac{1}{p}}(x)|_{op}\right)^{p}d\mu(x)\nonumber \\
& =\sup_{B}\frac{1}{\mu(B)}\int_{B}\left(|W^{\frac{1}{p}}(x)\mathcal{W}'_{B,p}|_{op}\right)^{p}d\mu(x)\simeq\sup_{B}\frac{1}{\mu(B)}\int_{B}\left(\sum_{j=1}^{n}|W^{\frac{1}{p}}(x)\mathcal{W}'_{B,p}\vec{e}_{j}|\right)^{p}d\mu(x)\nonumber \\
& \simeq\sup_{B}\sum_{j=1}^{n}\left(\frac{1}{\mu(B)}\int_{B}\left(|W^{\frac{1}{p}}(x)\mathcal{W}'_{B,p}\vec{e}_{j}|\right)^{p}d\mu(x)\right)^{\frac{1}{p}p}\simeq\sup_{B}\sum_{j=1}^{n}\left(|\mathcal{W}{}_{B,p}\mathcal{W}'_{B,p}\vec{e}_{j}|\right)^{p}\nonumber \\
& \simeq \sup_{B}|\mathcal{W}{}_{B,p}\mathcal{W}'_{B,p}|^p_{op}.
\end{align}
Analogously
\[
[W]_{A_{1}}\simeq\sup_{B}\esssup_{y\in B}|\mathcal{W}{}_{B,1}W^{-1}(y)|_{op}<\infty.
\]
\subsubsection{Reverse H\"older inequality}
As it was shown in \cite{B}, matrix $A_p$ weights do not satisfy a reverse H\"older inequality. However, as we see in this section, we can still expoit in a suitable weight the scalar reverse H\"older inequality. For that purpose, we begin settling the following result
\begin{lem}
If $W\in A_{p}$ for $p\geq1$ then for every $\vec{e}\not=0$ we have that 
$
|W^{\frac{1}{p}}(\cdot)\vec{e}|^{p}
$
 is a scalar $A_{p}$ weight and 
\[
[|W^{\frac{1}{p}}(\cdot)\vec{e}|^{p}]_{A_{p}}\lesssim[W]_{A_{p}}.
\]
\end{lem}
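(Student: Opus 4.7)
The plan is to reduce the scalar $A_p$ condition for $w_e(x):=|W^{1/p}(x)\vec{e}|^p$ to a single pointwise matrix domination. Setting $g(x):=|W^{1/p}(x)\vec{e}|$ so that $w_e=g^p$, the crucial observation is that for a.e.\ $x,y\in X$,
\[
\frac{g(x)}{g(y)} \leq |W^{1/p}(x)W^{-1/p}(y)|_{op} = |W^{-1/p}(y)W^{1/p}(x)|_{op}.
\]
The inequality follows from the factorization $W^{1/p}(x)\vec{e}=[W^{1/p}(x)W^{-1/p}(y)][W^{1/p}(y)\vec{e}]$ together with sub-multiplicativity of the operator norm; the equality is identity \eqref{conmutar} applied to the positive self-adjoint matrices $W^{1/p}(x)$ and $W^{-1/p}(y)$.

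For $p>1$, I would raise the previous estimate to the power $p'$, fix $x$ and average in $y\in B$ to obtain
\[
g(x)^{p'}\cdot\frac{1}{\mu(B)}\int_B g(y)^{-p'}\,d\mu(y) \leq \frac{1}{\mu(B)}\int_B |W^{-1/p}(y)W^{1/p}(x)|_{op}^{p'}\,d\mu(y).
\]
Raising to the power $p/p'$ and integrating in $x\in B$, the left-hand side becomes $\bigl(\tfrac{1}{\mu(B)}\int_B g^p\,d\mu\bigr)\bigl(\tfrac{1}{\mu(B)}\int_B g^{-p'}\,d\mu\bigr)^{p/p'}$ after normalization. Since $p/p'=p-1$ and $(g^p)^{1-p'}=g^{-p'}$, this is precisely the scalar $A_p$ expression associated with $w_e$ on $B$, while the right-hand side is exactly the integrand defining $[W]_{A_p}$. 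Taking the supremum over balls produces $[w_e]_{A_p}\leq[W]_{A_p}$.

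The endpoint $p=1$ is handled by the same idea with a single averaging step: here $w_e(x)=|W(x)\vec{e}|$ and the pointwise bound becomes $w_e(x)/w_e(y)\leq|W(x)W^{-1}(y)|_{op}$; averaging over $x\in B$ and then taking the essential supremum over $y\in B$, followed by the supremum over balls, gives $[w_e]_{A_1}\leq[W]_{A_1}$. No step presents a serious obstacle, and the argument in fact delivers the sharp constant $1$; the only piece requiring care is the bookkeeping of conjugate exponents in the passage from the pointwise matrix bound to the scalar $A_p$ quantity.
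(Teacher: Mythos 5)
Your argument is correct, and for the main case $p>1$ it takes a genuinely different route from the paper. The paper proves the bound by duality: it writes $|W^{1/p}(y)\vec e|=\sup_{\vec w\neq 0}|\langle\vec e,\vec w\rangle|/|W^{-1/p}(y)\vec w|$, tests with the specific vector $\vec v=\mathcal{W}_{p,B}\mathcal{W}_{p,B}\vec e$, and then invokes the reducing matrices $\mathcal{W}_{p,B}$, $\mathcal{W}'_{p,B}$ together with the characterization $[W]_{A_p}\simeq\sup_B|\mathcal{W}_{B,p}\mathcal{W}'_{B,p}|_{op}^p$ from \eqref{cte Ap via matrix}; this picks up dimensional constants coming from the John-ellipsoid equivalences $\rho(\vec e)\simeq|\mathcal{W}_{B,p}\vec e|$, which is why the lemma is stated with $\lesssim$. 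You instead bypass reducing matrices entirely: the factorization $W^{1/p}(x)\vec e=\bigl[W^{1/p}(x)W^{-1/p}(y)\bigr]W^{1/p}(y)\vec e$ gives the pointwise bound $g(x)/g(y)\le|W^{-1/p}(y)W^{1/p}(x)|_{op}$ (using \eqref{conmutar}), and averaging in $y$, raising to the power $p/p'=p-1$, and averaging in $x$ reproduces verbatim the double average defining $[W]_{A_p}$, so you get $[\,|W^{1/p}\vec e|^p]_{A_p}\le[W]_{A_p}$ with constant exactly $1$ and no dependence on the dimension $n$. Your exponent bookkeeping is right ($w_e^{1-p'}=g^{-p'}$, and raising to $p-1$ preserves the inequality whether $p-1$ is below or above $1$), and the only hypotheses used are that $W$ is a.e.\ positive definite and $\vec e\neq 0$, so $g>0$ a.e. For $p=1$ your argument and the paper's coincide, both resting on the same submultiplicativity estimate $|W(x)\vec e|\le|W(x)W^{-1}(y)|_{op}|W(y)\vec e|$. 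In short, your proof is more elementary and quantitatively slightly sharper; what the paper's route buys is only that it exercises the reducing-matrix machinery that is reused throughout the rest of the paper.
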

\allowdisplaybreaks
\begin{proof}
First we deal with the case $p>1$. We fix $\vec{e}\not=$0 and argue
as follows. For any $\vec{v}\not=0$ we have that
\begin{align*}
&\left(\frac{1}{\mu(B)}\int_{B}|W^{\frac{1}{p}}(x)\vec{e}|^{p}d\mu(x)\right)^{\frac{1}{p}}\left(\frac{1}{\mu(B)}\int_{B}|W^{\frac{1}{p}}(y)\vec{e}|^{-p'}dy\right)^{\frac{1}{p'}}\\
& =\left(\frac{1}{\mu(B)}\int_{B}|W^{\frac{1}{p}}(x)\vec{e}|^{p}d\mu(x)\right)^{\frac{1}{p}}\left(\frac{1}{\mu(B)}\int_{B}\left(\sup_{\vec{w}\not=0}\frac{|\langle\vec{e},\vec{w}\rangle|}{|W^{-\frac{1}{p}}(y)\vec{w}|}\right)^{-p'}dy\right)^{\frac{1}{p'}}\\
& \leq\left(\frac{1}{\mu(B)}\int_{B}|W^{\frac{1}{p}}(x)\vec{e}|^{p}d\mu(x)\right)^{\frac{1}{p}}\left(\frac{1}{\mu(B)}\int_{B}\left(\frac{|\langle\vec{e},\vec{v}\rangle|}{|W^{-\frac{1}{p}}(y)\vec{v}|}\right)^{-p'}dy\right)^{\frac{1}{p'}}\\
& \leq\left(\frac{1}{\mu(B)}\int_{B}|W^{\frac{1}{p}}(x)\vec{e}|^{p}d\mu(x)\right)^{\frac{1}{p}}\left(\frac{1}{\mu(B)}\int_{B}\left(\frac{|W^{-\frac{1}{p}}(y)\vec{v}|}{|\langle\vec{e},\vec{v}\rangle|}\right)^{p'}dy\right)^{\frac{1}{p'}}\\
& =\frac{\left(\frac{1}{\mu(B)}\int_{B}|W^{\frac{1}{p}}(x)\vec{e}|^{p}d\mu(x)\right)^{\frac{1}{p}}}{|\langle\vec{e},\vec{v}\rangle|}\left(\frac{1}{\mu(B)}\int_{B}|W^{-\frac{1}{p}}(y)\vec{v}|^{p'}dy\right)^{\frac{1}{p'}}
\end{align*}
Let $\vec{v}=\mathcal{W}_{p,B}\vec{u}$ with $\vec{u}$ to be chosen.
Then
\begin{align*}
 	& \left(\frac{1}{\mu(B)}\int_{B}|W^{\frac{1}{p}}(x)\vec{e}|^{p}d\mu(x)\right)^{\frac{1}{p}}\left(\frac{1}{\mu(B)}\int_{B}\left(\frac{|W^{-\frac{1}{p}}(y)\vec{v}|}{|\langle\vec{e},\vec{v}\rangle|}\right)^{p'}dy\right)^{\frac{1}{p'}}\\
 & \simeq\frac{|\mathcal{W}_{p,B}\vec{e}|}{|\langle\vec{e},\mathcal{W}_{p,B}\vec{u}\rangle|}\left(\frac{1}{\mu(B)}\int_{B}|W^{-\frac{1}{p}}(y)\mathcal{W}_{p,B}\vec{u}|^{p'}dy\right)^{\frac{1}{p'}}\\
 & \simeq\frac{|\mathcal{W}_{p,B}\vec{e}||\mathcal{W}'_{p,B}\mathcal{W}{}_{p,B}\vec{u}|}{|\langle\vec{e},\mathcal{W}_{p,B}\vec{u}\rangle|} \leq\frac{|\mathcal{W}_{p,B}\vec{e}||\vec{u}|}{|\langle\vec{e},\mathcal{W}_{p,B}\vec{u}\rangle|}|\mathcal{W}'_{p,B}\mathcal{W}{}_{p,B}|_{op}\\
 & \leq\frac{|\mathcal{W}_{p,B}\vec{e}||\vec{u}|}{|\langle\mathcal{W}_{p,B}\vec{e},\vec{u}\rangle|}[W]_{A_{p}}^{\frac{1}{p}}.
\end{align*}
Then, choosing $\vec{u}=\mathcal{W}_{p,B}\vec{e}$  we are
done.

For $p=1$ observe that for almost every $y\in B$, 
\begin{align*}
\frac{1}{\mu(B)}\int|W(x)\vec{e}||W(y)\vec{e}|^{-1}d\mu(x) & =\frac{1}{\mu(B)}\int_{B}|W(x)W^{-1}(y)W(y)\vec{e}||W(y)\vec{e}|^{-1}d\mu(x)\\
& \leq\frac{1}{\mu(B)}\int_{B}|W(x)W^{-1}(y)|_{op}|W(y)\vec{e}||W(y)\vec{e}|^{-1}d\mu(x)\\
& =\frac{1}{\mu(B)}\int_{B}|W(x)W^{-1}(y)|_{op}d\mu(x)
\end{align*}
and hence
\[
[|W(\cdot)\vec{e}|]_{A_{1}}\leq[W]_{A_{1}}.
\]
\end{proof}
Recall that in the scalar setting, $w\in A_{\infty}$ if 
\[
[w]_{A_{\infty}}=\sup_{B}\frac{1}{w(B)}\int_{B}M(w\chi_{B})d\mu<\infty
\]
and that if $w\in A_p$ then $[w]_{A_\infty}\lesssim [w]_{A_p}$.
The following sharp weak reverse
H\"older inequality that was settled in \cite[Theorem 1.1]{HPR} holds for $w\in A_\infty$.
\[
\left(\frac{1}{\mu(B)}\int_{B}w^{r(w)}(x)d\mu(x)\right)^{\frac{1}{r(w)}}\leq2(4c_{d})^{D_{\mu}}\frac{1}{\mu(2c_{d}B)}\int_{2c_{d}B}wd\mu
\]
where $D_\mu=\log_2(c_\mu)$, $B$ is any ball and 
\[
1<r(w)\leq 1+\frac{1}{\tau_{c_{d}\mu}[w]_{A_{\infty}}}=1+\frac{1}{6(32c_{d}^{2}(4c_{d}^{2}+c_{d})^{2})^{D_{\mu}}[w]_{A_{\infty}}}.
\]
Following \cite{NPTV}, we define
\[
[W]_{A_{\infty,p}^{sc}}=\sup_{\vec{e}\not=0}[|W^{\frac{1}{p}}(x)\vec{e}|^{p}]_{A_{\infty}}.
\]
Our next lemma is a version in spaces of homogeneous type of a result in \cite{MRR}.
\begin{lem}\label{lem:RevHolderDouble}
Let $1\leq p<\infty$ and $W\in A_{\infty,p}^{sc}$. If $1\leq r\leq1+\frac{1}{\tau_{c_{d}\mu}[W]_{A_{\infty,p}^{sc}}}$
and $A$ is a self-adjoint positive definite matrix then 
\[
\left(\frac{1}{\mu(B)}\int_{B}|W^{\frac{1}{p}}(x)A|_{op}^{pr}d\mu(x)\right)^{\frac{1}{r}}\lesssim\frac{1}{\mu(2c_{d}B)}\int_{2c_{d}B}|W^{\frac{1}{p}}(x)A|_{op}^{p}d\mu(x)
\]
\end{lem}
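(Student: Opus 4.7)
The plan is to reduce the matrix operator norm to a finite sum of scalar quantities and then apply the scalar sharp weak reverse H\"older inequality component by component. First I would fix an orthonormal basis $\{\vec{e}_j\}_{j=1}^n$ of $\mathbb{R}^n$. Since $|M|_{op}\leq \sum_{j=1}^n |M\vec{e}_j|$ for any matrix $M$, the elementary inequality $(\sum_{j=1}^n a_j)^{\alpha}\leq n^{\alpha-1}\sum_{j=1}^n a_j^{\alpha}$ valid for $\alpha\geq 1$, applied with $M=W^{\frac{1}{p}}(x)A$, yields
\[
|W^{\frac{1}{p}}(x)A|_{op}^{pr}\lesssim \sum_{j=1}^n |W^{\frac{1}{p}}(x)A\vec{e}_j|^{pr}.
\]
Averaging over $B$ and using the subadditivity inequality $(\sum_j a_j)^{1/r}\leq\sum_j a_j^{1/r}$ (valid since $1/r\leq 1$ because $r\geq 1$) then gives
\[
\left(\frac{1}{\mu(B)}\int_{B}|W^{\frac{1}{p}}(x)A|_{op}^{pr}d\mu(x)\right)^{\frac{1}{r}}\lesssim \sum_{j=1}^n\left(\frac{1}{\mu(B)}\int_{B}|W^{\frac{1}{p}}(x)A\vec{e}_j|^{pr}d\mu(x)\right)^{\frac{1}{r}}.
\]

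Next, for each $j$ I would apply the scalar sharp weak reverse H\"older inequality recalled just before the statement to the scalar weight $w_j(x):=|W^{\frac{1}{p}}(x)A\vec{e}_j|^p$. Since $A$ is positive definite one has $A\vec{e}_j\neq 0$, and by the very definition of $[W]_{A_{\infty,p}^{sc}}$ this forces $[w_j]_{A_\infty}\leq [W]_{A_{\infty,p}^{sc}}$. Consequently the admissible exponent in the scalar inequality satisfies $1+\frac{1}{\tau_{c_d\mu}[w_j]_{A_\infty}}\geq 1+\frac{1}{\tau_{c_d\mu}[W]_{A_{\infty,p}^{sc}}}\geq r$, which is precisely the hypothesis on $r$ in the statement. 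The scalar reverse H\"older inequality thus yields, uniformly in $j$,
\[
\left(\frac{1}{\mu(B)}\int_{B}w_j(x)^{r}d\mu(x)\right)^{\frac{1}{r}}\leq 2(4c_d)^{D_\mu}\frac{1}{\mu(2c_d B)}\int_{2c_d B}w_j(x)d\mu(x).
\]

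To conclude, summing these $n$ estimates and using the trivial pointwise bound $|W^{\frac{1}{p}}(x)A\vec{e}_j|^p\leq |W^{\frac{1}{p}}(x)A|_{op}^p$ (since $\vec{e}_j$ is a unit vector) gives
\[
\sum_{j=1}^n \frac{1}{\mu(2c_d B)}\int_{2c_d B}w_j(x)\, d\mu(x)\leq \frac{n}{\mu(2c_d B)}\int_{2c_d B}|W^{\frac{1}{p}}(x)A|_{op}^p d\mu(x),
\]
which, chained with the two displays above, is the desired estimate up to a constant depending only on $n$, $p$ and $c_d$, $c_\mu$. The main obstacle is essentially bookkeeping: one must ensure that the passage from the operator norm to the individual columns $A\vec{e}_j$ preserves the reverse H\"older threshold on $r$. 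This works because the definition of $[W]_{A_{\infty,p}^{sc}}$ supremizes over \emph{all} non-zero vectors, so the scalar $A_\infty$ constant of $|W^{\frac{1}{p}}(\cdot)A\vec{e}_j|^p$ is controlled by $[W]_{A_{\infty,p}^{sc}}$ uniformly in the matrix $A$ and the index $j$.
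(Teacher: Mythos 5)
Your proposal is correct and follows essentially the same route as the paper: fix an orthonormal basis, reduce the operator norm to the column vectors $A\vec{e}_j$, apply the scalar sharp weak reverse H\"older inequality to each scalar weight $|W^{\frac{1}{p}}(\cdot)A\vec{e}_j|^{p}$ (legitimate since $[W]_{A_{\infty,p}^{sc}}$ is a supremum over all non-zero vectors), and then return to the operator norm on the dilated ball. The only difference is that you spell out the elementary inequalities that the paper compresses into a single "$\lesssim$" step.
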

\begin{proof}
Fix some orthonormal basis $\{\vec{e_{j}}\}$ on $\mathbb{R}^{n}$.
Since for $r$ in the hypothesis $|W^{\frac{1}{p}}\vec{e}|^{p}$ satisfies
the reverse H\"older inequality uniformly on $\vec{e}$ we have that
\begin{align*}
\left(\frac{1}{\mu(B)}\int_{B}|W^{\frac{1}{p}}(x)A|_{op}^{pr}d\mu(x)\right)^{\frac{1}{r}} & \lesssim\sum_{j=1}^{n}\left(\frac{1}{\mu(2c_{d}B)}\int_{2c_{d}B}|W^{\frac{1}{p}}(x)A\vec{e}_{j}|^{pr}d\mu(x)\right)^{\frac{1}{r}}\\
 & \lesssim\sum_{j=1}^{n}\frac{1}{\mu(2c_{d}B)}\int_{2c_{d}B}|W^{\frac{1}{p}}(x)A\vec{e}_{j}|^{p}d\mu(x)\\
 & \lesssim\frac{1}{\mu(2c_{d}B)}\int_{2c_{d}B}|W^{\frac{1}{p}}(x)A|_{op}^{p}d\mu(x)
\end{align*}
and we are done.
\end{proof}
We end this section with a result based on \cite[Lemma 2.1]{B}.
\begin{lem}\label{lem:alphaFuera}
If $A$ and $B$ are positive self-adjoint matrices and $0<\alpha<1$ then
\[
|A^{\alpha}B^{\alpha}|_{op}\lesssim|AB|_{op}^{\alpha}.
\]
\end{lem}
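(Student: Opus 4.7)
The plan is to establish the classical Cordes inequality $|A^{\alpha}B^{\alpha}|_{op}\le|AB|_{op}^{\alpha}$, which is even stronger than the stated $\lesssim$ version with constant $1$. The whole argument hinges on the L\"owner--Heinz theorem, which asserts that $t\mapsto t^{\alpha}$ is operator monotone on $(0,\infty)$ for $\alpha\in(0,1)$, i.e.\ $0\le C\le D$ implies $C^{\alpha}\le D^{\alpha}$ in the L\"owner order.

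First I would reduce to the case where $A$ and $B$ are strictly positive definite (invertible) by a perturbation/continuity argument: replace $A$ by $A+\varepsilon I$ and $B$ by $B+\varepsilon I$, establish the inequality in this case, and let $\varepsilon\to0^{+}$, using continuity of $X\mapsto|X|_{op}$ and of the fractional power map on positive definite matrices. Next I would exploit the $C^{*}$-identity $|X|_{op}^{2}=|X^{*}X|_{op}$ with $X=A^{\alpha}B^{\alpha}$ and separately with $X=AB$. Since $A$ and $B$ are self-adjoint this yields
\[
|A^{\alpha}B^{\alpha}|_{op}^{2}=|B^{\alpha}A^{2\alpha}B^{\alpha}|_{op}, \qquad |AB|_{op}^{2}=|BA^{2}B|_{op}=:c.
\]
Thus the proof reduces to showing $|B^{\alpha}A^{2\alpha}B^{\alpha}|_{op}\le c^{\alpha}$.

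For the core step, from $BA^{2}B\le cI$, conjugating by the positive self-adjoint matrix $B^{-1}$ on both sides (conjugation by a positive matrix preserves the L\"owner order) gives $A^{2}\le cB^{-2}$. L\"owner--Heinz with exponent $\alpha\in(0,1)$ applied to this operator inequality yields $A^{2\alpha}\le c^{\alpha}B^{-2\alpha}$. Conjugating back by $B^{\alpha}$ produces $B^{\alpha}A^{2\alpha}B^{\alpha}\le c^{\alpha}I$, and taking operator norms closes the loop:
\[
|A^{\alpha}B^{\alpha}|_{op}^{2}=|B^{\alpha}A^{2\alpha}B^{\alpha}|_{op}\le c^{\alpha}=|AB|_{op}^{2\alpha}.
\]

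The main conceptual obstacle is correctly invoking operator monotonicity: one must use that $t\mapsto t^{\alpha}$ is \emph{operator} (not just numerically) monotone, a fact that is special to $\alpha\in(0,1)$ and fails for $\alpha>1$, which is precisely why the hypothesis $0<\alpha<1$ is essential. The invertibility reduction is routine but needed so that the transition $BA^{2}B\le cI\Rightarrow A^{2}\le cB^{-2}$ makes sense; everything else is bookkeeping with the $C^{*}$-identity.
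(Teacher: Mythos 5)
Your argument is correct, and in fact it establishes the sharper Cordes inequality $|A^{\alpha}B^{\alpha}|_{op}\le|AB|_{op}^{\alpha}$ with constant $1$: the chain $BA^{2}B\le cI\Rightarrow A^{2}\le cB^{-2}\Rightarrow A^{2\alpha}\le c^{\alpha}B^{-2\alpha}\Rightarrow B^{\alpha}A^{2\alpha}B^{\alpha}\le c^{\alpha}I$ is sound, since congruence by the self-adjoint invertible $B^{-1}$ (resp.\ $B^{\alpha}$) preserves the L\"owner order and L\"owner--Heinz applies exactly because $0<\alpha<1$; the $\varepsilon$-perturbation takes care of merely positive semi-definite $A,B$, using continuity of $X\mapsto X^{\alpha}$ and of the operator norm. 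There is nothing in the paper to compare against: the authors state the lemma without proof, referring only to Lemma 2.1 of Bownik's paper on inverse volume inequalities for matrix weights, and your L\"owner--Heinz argument is precisely the standard proof underlying that reference, so your write-up is a self-contained (and slightly stronger, constant-$1$) substitute for the citation.
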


%

\section{Weighted estimates}\label{sec:WEst}
%
%
%

\subsection{Proof of Theorem \ref{thm:StrongMax}}
We will follow the approach in \cite{IPRR}. The novelty here, besides the extension to spaces of homogeneous type, is that, essentially the same argument provided there works as well for $W\in A_p$.

We define
\[
M_{p,W}\vec{f}(x)=\sup_{B\ni x}\dashint_B |W^\frac{1}{p}(x)W^{-\frac{1}{p}}(y)\vec{f}(y)|d\mu(y)
\]
where $\mathcal{D}$ is any dyadic system.
First we observe that by Proposition \ref{proposition:dyadicsystem}
\[
M_{p,W}\vec{f}\leq\sum_{t=1}^{m}M_{p,W}^{\mathcal{D}^{t}}\vec{f}.
\]
Note that this reduces the problem to deal with each $M_{p,W}^{\mathcal{D}^{t}}\vec{f}$,
namely, dropping the superindex $t$ and the subindex $p$ in the notation it suffices to show that
\[
\|M_{W}^{\mathcal{D}}\vec{f}\|_{L^{p}}\lesssim \kappa \|\vec{f}\|_{L^{p}}.
\]
where \[\kappa=\begin{cases}[W]_{A_{p}}^{\frac{1}{p}}[W^{-\frac{1}{p-1}}]_{A_{\infty,p'}^{sc}}^{\frac{1}{p}} &\text{if }W\in A_p \\ [W]_{A_{q}}^{\frac{1}{p}}, & \text{if }W\in A_q \quad 1< q<p. \end{cases}\]
We argue as follows. Let 
\[
M_{W,k}^{\mathcal{D}}\vec{f}(x)=\sup_{Q\ni x,Q\in\mathcal{D}_{j},j\leq k}\frac{1}{\mu(Q)}\int_{Q}\left|W^{\frac{1}{p}}(x)W^{-\frac{1}{p}}(y)\vec{f}(y)\right|d\mu(y)
\]
Note that then by dominated convergence theorem
\[
\|M_{W}^{\mathcal{D}}\vec{f}\|_{L^{p}}=\lim_{k\rightarrow\infty}\|M_{W,k}^{\mathcal{D}}\vec{f}\|_{L^{p}}
\]
and then it suffices to deal with the term in the right hand side.
We begin observing that 
\[
M_{W,k}^{\mathcal{D}}\vec{f}(x)=\sum_{J\in\mathcal{D}_{k}}M_{J,W}\vec{f}(x)\chi_{J}(x)
\]
where 
\[
M_{J,W}\vec{f}(x)=\sup_{Q\ni x,Q\in\mathcal{D}(J)}\frac{1}{\mu(Q)}\int_{Q}\left|W^{\frac{1}{p}}(x)W^{-\frac{1}{p}}(y)\vec{f}(y)\right|d\mu(y).
\]
Then we have that
\begin{align*}
\int_{X}M_{W,k}^{\mathcal{D}}\vec{f}(x)^{p}d\mu(x) & =\int_{X}\sum_{J\in\mathcal{D}_{k}}M_{J,W}\vec{f}(x)^{p}\chi_{J}(x)d\mu(x)\\
 & =\sum_{J\in\mathcal{D}_{k}}\int_{J}M_{J,W}\vec{f}(x)^{p}d\mu(x).
\end{align*}
And it suffices to deal with the latter. We shall begin providing
a pointwise domination for $M_{J,W}\vec{f}(x)$. First we introduce
a stopping time condition. Let $\mathcal{J}(J)$ the maximal cubes
$L\in\mathcal{D}(J)$ where 
\begin{equation}
\frac{1}{\mu(L)}\int_{L}\left|\mathcal{W}_{J,p}W^{-\frac{1}{p}}\vec{f}\right|d\mu>4\frac{1}{\mu(J)}\int_{J}\left|\mathcal{W}_{J,p}W^{-\frac{1}{p}}\vec{f}\right|d\mu\label{eq:StoppingTime}.
\end{equation}
By maximality
\begin{align*}
\sum_{L\in\mathcal{J}(J)}\mu(L) & \leq\frac{1}{4}\frac{1}{\frac{1}{\mu(L)}\int_{J}\left|\mathcal{W}_{J,p}W^{-\frac{1}{p}}\vec{f}\right|}\sum_{L\in\mathcal{J}(J)}\int_{L}\left|\mathcal{W}_{J,p}W^{-\frac{1}{p}}\vec{f}\right|d\mu\\
 & \leq\frac{1}{4}\frac{1}{\frac{1}{\mu(L)}\int_{J}\left|\mathcal{W}_{J,p}W^{-\frac{1}{p}}\vec{f}\right|}\int_{J}\left|\mathcal{W}_{J,p}W^{-\frac{1}{p}}\vec{f}\right|d\mu\\
 & \leq \frac{\mu(J)}{4}.
\end{align*}
Now let $\mathcal{F}(J)$ be the collection of cubes in $Q \in \mathcal{D}(J):$
that are not a subset of any cube $I\in\mathcal{J}(J)$. We shall
call $\cup\mathcal{J}(J)=\cup_{L\in\mathcal{J}(J)}L$. 

We provide the aforementioned pointwise domination for $M_{J,W}\vec{f}(x)$
studying two cases.
\begin{itemize}
\item Case 1: For $Q\not\in\mathcal{F}(J)$ we have that $Q\subseteq L$
for some $L\in\mathcal{J}(J)$ and hence, if $x\in Q$ then $x\in\cup\mathcal{J}(J)$.
Hence 
\[
\frac{1}{\mu(Q)}\int_{Q}\left|W^{\frac{1}{p}}(x)W^{-\frac{1}{p}}(y)\vec{f}(y)\right|d\mu(y)\leq\sup_{L\in\mathcal{J}(J)}M_{L,W}\vec{f}(x).
\]	
	
\item Case 2: Let $Q\in\mathcal{F}(J)$.
Then, by the maximality of $I\in \mathcal{J}(J)$ we have that 

\begin{align*}
 & \frac{1}{\mu(Q)}\int_{Q}\left|W^{\frac{1}{p}}(x)W^{-\frac{1}{p}}(y)\vec{f}(y)\right|d\mu(y)\\
\leq & \left|W^{\frac{1}{p}}(x)\mathcal{W}_{J,p}^{-1}\right|_{op} \frac{1}{\mu(Q)}\int_{Q}\left|\mathcal{W}_{J,p}W^{-\frac{1}{p}}(y)\vec{f}(y)\right|d\mu(y)\\
\leq & 4\left|W^{\frac{1}{p}}(x)\mathcal{W}_{J,p}^{-1}\right|_{op}\frac{1}{\mu(J)}\int_{J}\left|\mathcal{W}_{J,p}W^{-\frac{1}{p}}(y)\vec{f}(y)\right|d\mu(y).
\end{align*}
From this point we consider different cases.  
\begin{itemize}
\item If $W\in A_p$, choosing 
\[
r=1+\frac{1}{\tau_{c_{d}\mu}     \left[W^{-\frac{1}{p-1}}\right]_{A_{\infty,p}^{sc}}}
\]
 using H\"older inequality, and taking into a account the doubling property  of the measure $\mu$, Lemma 3, \eqref{conmutar}, \eqref{modulo rho estrella}, and \eqref{cte Ap via matrix} we have that
	\begin{align*}
	& \left|W^{\frac{1}{p}}(x)\mathcal{W}_{J,p}^{-1}\right|_{op}\frac{1}{\mu(J)}\int_{J}\left|\mathcal{W}_{J,p}W^{-\frac{1}{p}}(y)\vec{f}(y)\right|d\mu(y)\\
	\leq & \left|W^{\frac{1}{p}}(x)\mathcal{W}_{J,p}^{-1}\right|_{op}\left(\frac{1}{\mu(J)}\int_{J}\left|\mathcal{W}_{J,p}W^{-\frac{1}{p}}(y)\right|^{rp'}_{ op}d\mu(y)\right)^{\frac{1}{rp'}}\left(\frac{1}{\mu(J)}\int_{J}\left|\vec{f}(y)\right|^{(rp')'}d\mu(y)\right)^{\frac{1}{(rp')'}}\\			
	\leq &\left|W^{\frac{1}{p}}(x)\mathcal{W}_{J,p}^{-1}\right|_{op} \left(\frac{1}{\mu(J)}\int_{1J}\left|\mathcal{W}_{J,p}W^{-\frac{1}{p}}(y)\right|^{rp'}_{ op}d\mu(y)\right)^{\frac{1}{rp'}}\left(\frac{1}{\mu(J)}\int_{J}\left|\vec{f}(y)\right|^{(rp')'}d\mu(y)\right)^{\frac{1}{(rp')'}}\\
	=&\left|W^{\frac{1}{p}}(x)\mathcal{W}_{J,p}^{-1}\right|_{op}\left(\frac{\mu(1J)}{\mu(J)}\frac{1}{\mu(1J)}\int_{1J}\left|\mathcal{W}_{J,p}W^{-\frac{1}{p}}(y)\right|^{rp'}_{ op}d\mu(y)\right)^{\frac{1}{rp'}}\left(\frac{1}{\mu(J)}\int_{J}\left|\vec{f}(y)\right|^{(rp')'}d\mu(y)\right)^{\frac{1}{(rp')'}}\\			
	\leq&\left|W^{\frac{1}{p}}(x)\mathcal{W}_{J,p}^{-1}\right|_{op}     c_{\mu}^{\log_2\frac{C_0}{c_0}} \left(\frac{1}{\mu(1J)}\int_{1J} \left|\mathcal{W}_{J,p}W^{-\frac{1}{p}}(y)\right|^{rp'}_{op} d\mu(y)\right)^{\frac{1}{rp'}} \left(\frac{1}{\mu(J)}\int_{J}\left|\vec{f}(y)\right|^{(rp')'}d\mu(y)\right)^{\frac{1}{(rp')'}}\\			
	\lesssim & \left|W^{\frac{1}{p}}(x)\mathcal{W}_{J,p}^{-1}\right|_{op}\left(\frac{1}{\mu(1J)}\int_{1J}\left|\mathcal{W}_{J,p}W^{-\frac{1}{p}}(y)\right|^{rp'}_{ op}d\mu(y)\right)^{\frac{1}{rp'}}\left(\frac{1}{\mu(J)}\int_{J}\left|\vec{f}(y)\right|^{(rp')'}d\mu(y)\right)^{\frac{1}{(rp')'}}\\
	\lesssim & \left|W^{\frac{1}{p}}(x)\mathcal{W}_{J,p}^{-1}\right|_{op}\left(\frac{1}{\mu(2c_{d}J)}\int_{2c_{d}J}\left|\mathcal{W}_{J,p}W^{-\frac{1}{p}}(y)\right|^{p'}_{ op}d\mu(y)\right)^{\frac{1}{p'}}\left(\frac{1}{\mu(J)}\int_{J}\left|\vec{f}(y)\right|^{(rp')'}d\mu(y)\right)^{\frac{1}{(rp')'}}\\
	\simeq & \left|W^{\frac{1}{p}}(x)\mathcal{W}_{J,p}^{-1}\right|_{op}\left|\mathcal{W}_{J,p}\mathcal{W}'_{2c_{d}J,p}\right|_{op}\left(\frac{1}{\mu(J)}\int_{J}\left|\vec{f}(y)\right|^{(rp')'}d\mu(y)\right)^{\frac{1}{(rp')'}}\\
	\simeq & \left|W^{\frac{1}{p}}(x)\mathcal{W}_{J,p}^{-1}\right|_{op}\left(\frac{1}{\mu(J)}\int_{J}\left|W^{\frac{1}{p}}(y)\mathcal{W}'_{2c_{d}J,p}\right|^{p}_{ op}d\mu(y)\right)^{\frac{1}{p}}\left(\frac{1}{\mu(J)}\int_{J}\left|\vec{f}(y)\right|^{(rp')'}d\mu(y)\right)^{\frac{1}{(rp')'}}\\
	\leq & \left|W^{\frac{1}{p}}(x)\mathcal{W}_{J,p}^{-1}\right|_{op}\left(\frac{\mu(2c_{d}J)}{\mu(J)}\frac{1}{\mu(2c_{d}J)}\int_{2c_{d}J}\left|W^{\frac{1}{p}}(y)\mathcal{W}'_{2c_{d}J,p}\right|^{p}_{ op}d\mu(y)\right)^{\frac{1}{p}}\left(\frac{1}{\mu(J)}\int_{J}\left|\vec{f}(y)\right|^{(rp')'}d\mu(y)\right)^{\frac{1}{(rp')'}}\\
	\lesssim & \left|W^{\frac{1}{p}}(x)\mathcal{W}_{J,p}^{-1}\right|_{op}\left(\frac{1}{\mu(2c_{d}J)}\int_{2c_{d}J}\left|W^{\frac{1}{p}}(y)\mathcal{W}'_{2c_{d}J,p}\right|^{p}_{ op}d\mu(y)\right)^{\frac{1}{p}}\left(\frac{1}{\mu(J)}\int_{J}\left|\vec{f}(y)\right|^{(rp')'}d\mu(y)\right)^{\frac{1}{(rp')'}}\\
	\simeq & \left|W^{\frac{1}{p}}(x)\mathcal{W}_{J,p}^{-1}\right|_{op}\left|\mathcal{W}_{2c_{d}J,p}\mathcal{W}'_{2c_{d}J,p}\right|_{ op}\left(\frac{1}{\mu(J)}\int_{J}\left|\vec{f}(y)\right|^{(rp')'}d\mu(y)\right)^{\frac{1}{(rp')'}}\\
	\lesssim & [W]_{A_{p}}^{\frac{1}{p}}\left|W^{\frac{1}{p}}(x)\mathcal{W}_{J,p}^{-1}\right|_{op}\left(\frac{1}{\mu(J)}\int_{J}\left|\vec{f}(y)\right|^{(rp')'}d\mu(y)\right)^{\frac{1}{(rp')'}}. 
\end{align*}

\item If $W\in A_q$ for $1<q<p$, taking into account Lemma \ref{lem:alphaFuera}, using H\"older inequality, Jensen inequality and  \eqref{cte Ap via matrix} we have that
\begin{align*}
	&4\left|W^{\frac{1}{p}}(x)\mathcal{W}_{J,p}^{-1}\right|_{op}\frac{1}{\mu(J)}\int_{J}\left|\mathcal{W}_{J,p}W^{-\frac{1}{p}}(y)\vec{f}(y)\right|d\mu(y)\\
	& \leq 4\left|W^{\frac{1}{p}}(x)\mathcal{W}_{J,p}^{-1}\right|_{op}\frac{1}{\mu(J)}\int_{J}\left(\frac{1}{\mu(J)}\int_{J}\left|W^{\frac{1}{q}\frac{q}{p}}(x)W^{-\frac{1}{q}\frac{q}{p}}(y)\right|_{op}^{p}dx\right)^{\frac{1}{p}}|\vec{f}(y)|d\mu(y)\\
	& \lesssim\left|W^{\frac{1}{p}}(x)\mathcal{W}_{J,p}^{-1}\right|_{op}\frac{1}{\mu(J)}\int_{J}\left(\frac{1}{\mu(J)}\int_{J}\left|W^{\frac{1}{q}}(x)W^{-\frac{1}{q}}(y)\right|_{op}^{q}dx\right)^{\frac{1}{p}}|\vec{f}(y)|d\mu(y)\\
	& \lesssim\left|W^{\frac{1}{p}}(x)\mathcal{W}_{J,p}^{-1}\right|_{op}\frac{1}{\mu(J)}\int_{J}\left|\mathcal{W}_{J,q}W^{-\frac{1}{q}}(y)\right|_{op}^{\frac{q}{p}}|\vec{f}(y)|d\mu(y)\\
	& \lesssim\left|W^{\frac{1}{p}}(x)\mathcal{W}_{J,p}^{-1}\right|_{op}\left(\frac{1}{\mu(J)}\int_{J}\left|\mathcal{W}_{J,q}W^{-\frac{1}{q}}(y)\right|_{op}^{\frac{q'q}{p}}d\mu(y)\right)^{\frac{1}{q'}}\left(\frac{1}{\mu(J)}\int_{J}|\vec{f}(y)|^{q}d\mu(y)\right)^{\frac{1}{q}}\\
	& \lesssim\left|W^{\frac{1}{p}}(x)\mathcal{W}_{J,p}^{-1}\right|_{op}\left(\frac{1}{\mu(J)}\int_{J}\left|\mathcal{W}_{J,q}W^{-\frac{1}{q}}(y)\right|_{op}^{\frac{q'q}{p}}d\mu(y)\right)^{\frac{pq}{q'pq}}\left(\frac{1}{\mu(J)}\int_{J}|\vec{f}(y)|^{q}d\mu(y)\right)^{\frac{1}{q}}\\
	&\leq\left|W^{\frac{1}{p}}(x)\mathcal{W}_{J,p}^{-1}\right|_{op}\left(\frac{1}{\mu(J)}\int_{J}\left|\mathcal{W}_{J,q}W^{-\frac{1}{q}}(y)\right|_{op}^{\frac{q'q}{p}\frac{p}{q}}d\mu(y)\right)^{\frac{q}{q'p}}\left(\frac{1}{\mu(J)}\int_{J}|\vec{f}(y)|^{q}d\mu(y)\right)^{\frac{1}{q}}\\	
	& =\left|W^{\frac{1}{p}}(x)\mathcal{W}_{J,p}^{-1}\right|_{op}\left(\frac{1}{\mu(J)}\int_{J}\left|\mathcal{W}_{J,q}W^{-\frac{1}{q}}(y)\right|_{op}^{q'}d\mu(y)\right)^{\frac{1}{q'}\frac{q}{p}}\left(\frac{1}{\mu(J)}\int_{J}|\vec{f}(y)|^{q}d\mu(y)\right)^{\frac{1}{q}}\\
	& \lesssim\left|W^{\frac{1}{p}}(x)\mathcal{W}_{J,p}^{-1}\right|_{op}\left|\mathcal{W}'_{J,q}\mathcal{W}_{J,q}\right|_{op}^{\frac{q}{p}}\left(\frac{1}{\mu(J)}\int_{J}|\vec{f}(y)|^{q}d\mu(y)\right)^{\frac{1}{q}}\\
	& \lesssim\left|W^{\frac{1}{p}}(x)\mathcal{W}_{J,p}^{-1}\right|_{op}[W]_{A_{q}}^{\frac{1}{p}}\left(\frac{1}{\mu(J)}\int_{J}|\vec{f}(y)|^{q}d\mu(y)\right)^{\frac{1}{q}}.
\end{align*}

\item If $W\in A_1$, from  Lemma \ref{lem:alphaFuera} and the definition of $[W]_{A_1}$ constant we get

\begin{align*}
	& 4\left|W^{\frac{1}{p}}(x)\mathcal{W}_{J,p}^{-1}\right|_{op}\frac{1}{\mu(J)}\int_{J}\left|\mathcal{W}_{J,p}W^{-\frac{1}{p}}(y)\vec{f}(y)\right|d\mu(y)\\
	\lesssim & \left|W^{\frac{1}{p}}(x)\mathcal{W}_{J,p}^{-1}\right|_{op}\frac{1}{\mu(J)}\int_{J}\left(\frac{1}{\mu(J)}\int_{J}\left|W^{\frac{1}{p}}(x)W^{-\frac{1}{p}}(y)\right|_{op}^{p}dx\right)^{\frac{1}{p}}|\vec{f}(y)|d\mu(y)\\
	\lesssim & \left|W^{\frac{1}{p}}(x)\mathcal{W}_{J,p}^{-1}\right|_{op}\frac{1}{\mu(J)}\int_{J}\left(\frac{1}{\mu(J)}\int_{J}\left|W(x)W^{-1}(y)\right|_{op}dx\right)^{\frac{1}{p}}|\vec{f}(y)|d\mu(y)\\
	\lesssim & \left|W^{\frac{1}{p}}(x)\mathcal{W}_{J,p}^{-1}\right|_{op}[W]_{A_{1}}^{\frac{1}{p}}\frac{1}{\mu(J)}\int_{J}|\vec{f}(y)|d\mu(y).
\end{align*}
\end{itemize}
Summarizing

\begin{align*}
	&\frac{1}{\mu(Q)}\int_{Q}\left|W^{\frac{1}{p}}(x)W^{-\frac{1}{p}}(y)\vec{f}(y)\right|d\mu(y)\\
	\leq& 4\left|W^{\frac{1}{p}}(x)\mathcal{W}_{J,p}^{-1}\right|_{op}\frac{1}{\mu(J)}\int_{J}\left|\mathcal{W}_{J,p}W^{-\frac{1}{p}}(y)\vec{f}(y)\right|d\mu(y)\\
	\lesssim &\left|W^{\frac{1}{p}}(x)\mathcal{W}_{J,p}^{-1}\right|_{op}\begin{cases} 
		[W]_{A_{p}}^{\frac{1}{p}}\left(\frac{1}{\mu(J)}\int_{J}\left|\vec{f}(y)\right|^{(rp')'}d\mu(y)\right)^{\frac{1}{(rp')'}}
		&\text{if }  W\in A_p,\\
		[W]_{A_{q}}^{\frac{1}{p}}\left(\frac{1}{\mu(J)}\int_{J}|\vec{f}(y)|^{q}d\mu(y)\right)^{\frac{1}{q}} & \text{if } W\in A_q \quad 1\leq q<p.\end{cases}
\end{align*}

\end{itemize}
Gathering the estimates above we have that 

\[
M_{J,W}\vec{f}(x)\leq\max\left\{ c_{n,p}[W]_{A_{s}}^{\frac{1}{p}}\left|W^{\frac{1}{p}}(x)\mathcal{W}_{J,p}^{-1}\right|_{op}\left(\dashint_{J}\left|\vec{f}(y)\right|^{t}d\mu(y)\right)^{\frac{1}{t}},\chi_{\cup\mathcal{J}(J)}(x)\sup_{L\in\mathcal{J}(J)}M_{L,W}\vec{f}(x)\right\} 
\]
where $s=p$ and $t=(rp')'$ if $W\in A_p$, and $s=q=t$ if $W\in A_q$, with $1\leq q<p$. Hence

\begin{align*}
	& \int_{J}\left(M_{J,W}\vec{f}\right)^{p}d\mu\\	
	\leq & c_{n,p}[W]_{A_{s}}\frac{1}{\mu(J)}\int_{J}\left|W^{\frac{1}{p}}(x)\mathcal{W}_{J,p}^{-1}\right|_{op}^{p}\left(\frac{1}{\mu(J)}\int_{J}\left|\vec{f}(y)\right|^{t}d\mu(y)\right)^{\frac{p}{t}}\mu(J)+\sum_{L\in\mathcal{J}(J)}\int_{L}\left(M_{L,W}\vec{f}\right)^{p}d\mu\\
	\simeq & c_{n,p}[W]_{A_{s}}|\mathcal{W}_{J,p}\mathcal{W}_{J,p}^{-1}|_{op}^p\left(\frac{1}{\mu(J)}\int_{J}\left|\vec{f}(y)\right|^{t}d\mu(y)\right)^{\frac{p}{t}}\mu(J)+\sum_{L\in\mathcal{J}(J)}\int_{L}\left(M_{L,W}\vec{f}\right)^{p}d\mu\\
	=& c_{n,p}[W]_{A_{s}}\left(\frac{1}{\mu(J)}\int_{J}\left|\vec{f}(y)\right|^{t}d\mu(y)\right)^{\frac{p}{t}}\mu(J)+\sum_{L\in\mathcal{J}(J)}\int_{L}\left(M_{L,W}\vec{f}\right)^{p}d\mu.
\end{align*}

Note that if as usual, $\mathcal{J}_{t}(J)=\left\{ L\in\mathcal{J}(Q)\,:\,Q\in\mathcal{J}_{t-1}(J)\right\} $
with $\mathcal{J}_{0}(J)=\{J\}$ and $\mathcal{S}=\cup_{t}\mathcal{J}_{t}(J)$
then $\mathcal{S\subset D}(J)$ is a sparse and by iteration
\begin{align*}
	\int_{J}\left(M_{J,W}\vec{f}(x)\right)^{p}dx & \lesssim c_{n,p}[W]_{A_{s}}\sum_{L\in\mathcal{S}}\left(\frac{1}{\mu(L)}\int_{L}\left|\vec{f}(y)\right|^{t}d\mu(y)\right)^{\frac{p}{t}}\mu(L)\\
	&\lesssim c_{n,p}[W]_{A_{s}}\sum_{L\in\mathcal{S}}\left(\inf_{z\in L}M_t|\vec{f}(z)|\right)^p\mu(E_L)\\
	& \leq c_{n,p}[W]_{A_{s}}\int_{J}(M_{t}|\vec{f}(x)|)^{p}d\mu(x).
\end{align*}
Hence
\begin{align*}
	\int_{X}\left( M_{W,k}^{\mathcal{D}}\vec{f}(x)\right) ^{p}d\mu(x) & = \int_{X}\sum_{J\in\mathcal{D}_{k}}\left( M_{J,W}\vec{f}(x)\chi_{J}(x)d\right)^p\mu(x)\\
	& =\sum_{J\in\mathcal{D}_{k}}\int_{J}\left( M_{J,W}\vec{f}(x)\right) ^{p}d\mu(x).\\
	& \lesssim[W]_{A_{s}}\sum_{J\in\mathcal{D}_{k}}\int_{J}\left(M_{t}|\vec{f}(x)|\right)^{p}d\mu(x)\\
	& =[W]_{A_{s}}\int_{X}\left(M_{t}|\vec{f}(x)|\right)^{p}d\mu(x).
\end{align*}
Finally observe that
\begin{align*}
\left(\int_{X}M_{t}(|\vec{f}|)^{p}d\mu\right)^{\frac{1}{p}} & =\left(\int_{X}M(|\vec{f}|^{t})^{\frac{p}{t}}d\mu\right)^{\frac{t}{p}\frac{1}{t}}\lesssim\left[\left(\frac{p}{t}\right)^{'}\right]^{\frac{1}{t}}\left(\int_{X}|\vec{f}|^{p}d\mu\right)^{\frac{1}{p}}\\
 & \lesssim\left[\left(\frac{p}{t}\right)^{'}\right]^{\frac{1}{t}}\left(\int_{X}|\vec{f}|^{p}d\mu\right)^{\frac{1}{p}}.
\end{align*}
This yields
\begin{align*}
\|M_{W,k}^{\mathcal{D}}\vec{f}\|_{L^{p}} & \lesssim[W]_{A_{s}}^{\frac{1}{p}}\|M_{t}|\vec{f}|\|_{L^{p}}\lesssim[W]_{A_{s}}^{\frac{1}{p}}\left[\left(\frac{p}{t}\right)^{'}\right]^{t}\|\vec{f}\|_{L^{p}}\\
 & \leq[W]_{A_{s}}^{\frac{1}{p}}\left[\left(\frac{p}{t}\right)^{'}\right]^{\frac{1}{t}}\|\vec{f}\|_{L^{p}}.
\end{align*}
This allows us to end the proof of the theorem since, in the case that $W\in A_q$ with $1\leq q<p$, we have that
\[[W]_{A_{s}}^{\frac{1}{p}}\left[\left(\frac{p}{t}\right)^{'}\right]^{\frac{1}{t}}\|\vec{f}\|_{L^{p}}=[W]_{A_{q}}^{\frac{1}{p}}\left[\left(\frac{p}{q}\right)^{'}\right]^{\frac{1}{q}}\|\vec{f}\|_{L^{p}}\simeq [W]_{A_{q}}^{\frac{1}{p}}\|\vec{f}\|_{L^{p}}\]
and in the case $W\in A_p$, since
\[
\left(\frac{p}{(rp')'}\right)^{'}\leq p'r'\qquad\text{and}\qquad\frac{1}{(rp')'}\leq\frac{1}{r'}+\frac{1}{p}
\]
by our choice for $r$ we have that 
\[
\left[\left(\frac{p}{(rp')'}\right)^{'}\right]^{\frac{1}{(rp')'}}\leq\left[p'r'\right]^{\frac{1}{r'}+\frac{1}{p}}\lesssim[W]_{A_{\infty,p}^{sc}}^{\frac{1}{p}}
\]
and hence
\[
[W]_{A_{s}}^{\frac{1}{p}}\left[\left(\frac{p}{t}\right)^{'}\right]^{\frac{1}{t}}\|\vec{f}\|_{L^{p}}
=[W]_{A_{p}}^{\frac{1}{p}}\left[\left(\frac{p}{(rp')'}\right)^{'}\right]^{\frac{1}{(rp')'}}\|\vec{f}\|_{L^{p}}
\lesssim[W]_{A_{p}}^{\frac{1}{p}}[W]_{A_{\infty,p}^{sc}}^{\frac{1}{p}}\|\vec{f}\|_{L^{p}}.
\]
This ends the proof.

\subsection{Proof of Theorem \ref{Thm:StrongTypeLrH}}
To settle Theorem \ref{Thm:StrongTypeLrH} it suffices to adapt arguments in \cite{MRR}. In this subsection we provide some hints on how to do that.
The following lemma is going to be used for the result for such operators

\begin{lem}\label{lem:ApFromRH}
Let $q,r,s>1$. Assume that 
\begin{align*}|\mathcal{V}_{B}\vec{e}|&\simeq\left(\frac{1}{\mu(B)}\int_B|W^{-\frac{1}{q}}(x)\vec{e}|^{q'r}\right)^\frac{1}{rq'}\\
	|\mathcal{U}_{B}\vec{e}|&\simeq\left(\frac{1}{\mu(B)}\int_B|W^{\frac{1}{q}}(x)\vec{e}|^{qs}\right)^\frac{1}{qs}
\end{align*}
for every $\vec{e}\in\mathbb{R}^n$ and that 
\begin{align*}\left(\frac{1}{\mu(B)}\int_B|W^{-\frac{1}{q}}(x)\vec{e}|^{q'r}\right)^\frac{1}{rq'}&\lesssim\left(\frac{1}{\mu(2c_{d}B)}\int_{2c_{d}B}|W^{-\frac{1}{q}}(x)\vec{e}|^{q'}\right)^\frac{1}{q'}\\
	\left(\frac{1}{\mu(B)}\int_B|W^{\frac{1}{q}}(x)\vec{e}|^{qs}\right)^\frac{1}{qs}& \lesssim\left(\frac{1}{\mu(2c_{d}B)}\int_{2c_{d}B}|W^{\frac{1}{q}}(x)\vec{e}|^{q}\right)^\frac{1}{q}
\end{align*}
for every $\vec{e}\in\mathbb{R}^n$. Then
\[|\mathcal{V}_{B}\mathcal{U}_{B}\vec{e}|\lesssim|\mathcal{W}_{2c_{d}B,q}\mathcal{W}'_{2c_{d}B,q}\vec{e}|.\]
\end{lem}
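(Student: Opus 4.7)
The strategy is to successively apply the two reverse H\"older hypotheses so as to replace the reducing matrices $\mathcal{V}_B$ and $\mathcal{U}_B$, which integrate $|W^{-\frac{1}{q}}\vec{e}|^{q'r}$ and $|W^{\frac{1}{q}}\vec{e}|^{qs}$ over the ball $B$, by the reducing matrices $\mathcal{W}'_{2c_dB,q}$ and $\mathcal{W}_{2c_dB,q}$ corresponding to $q'$- and $q$-th power averages over the enlarged ball $2c_dB$. Once this is done, the conclusion is essentially the definition of $|\mathcal{W}_{2c_dB,q}\mathcal{W}'_{2c_dB,q}\vec{e}|$.

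First I would use the reducing matrix property of $\mathcal{V}_B$ applied to the vector $\mathcal{U}_B\vec{e}$ to write
\[
|\mathcal{V}_B\mathcal{U}_B\vec{e}|^{q'r}\simeq \frac{1}{\mu(B)}\int_B |W^{-\frac{1}{q}}(x)\mathcal{U}_B\vec{e}|^{q'r}\,d\mu(x),
\]
and then invoke the first reverse H\"older inequality in the hypothesis, with $\mathcal{U}_B\vec{e}$ playing the role of the fixed vector. Raising to the power $1/(q'r)$ gives
\[
|\mathcal{V}_B\mathcal{U}_B\vec{e}|\lesssim \left(\frac{1}{\mu(2c_dB)}\int_{2c_dB}|W^{-\frac{1}{q}}(x)\mathcal{U}_B\vec{e}|^{q'}\,d\mu(x)\right)^{\frac{1}{q'}}\simeq |\mathcal{W}'_{2c_dB,q}\mathcal{U}_B\vec{e}|.
\]

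The remaining task will be to bound $|\mathcal{W}'_{2c_dB,q}\mathcal{U}_B\vec{e}|$ by $|\mathcal{W}_{2c_dB,q}\mathcal{W}'_{2c_dB,q}\vec{e}|$. I would again expand the quantity on the left as an average over $2c_dB$, and for each fixed $x$ exploit duality and the self-adjointness of $\mathcal{U}_B$ and $W^{-\frac{1}{q}}(x)$ to transfer $\mathcal{U}_B$ onto the dual test vector via
\[
\langle W^{-\frac{1}{q}}(x)\mathcal{U}_B\vec{e},\vec{v}\rangle=\langle \vec{e},\mathcal{U}_B W^{-\frac{1}{q}}(x)\vec{v}\rangle.
\]
Introducing the factor $\mathcal{W}_{2c_dB,q}\mathcal{W}'_{2c_dB,q}$ via a weighted Cauchy--Schwarz pairing and then applying the second reverse H\"older inequality to the resulting expression, which involves $|W^{\frac{1}{q}}(\cdot)\vec{w}|$ for an appropriate vector $\vec{w}$, should produce the desired comparison.

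The hard part will be this second step: the matrices $\mathcal{U}_B$ and $\mathcal{W}'_{2c_dB,q}$ do not commute in general, so the exchange of factors needed to bring $\mathcal{W}_{2c_dB,q}\mathcal{W}'_{2c_dB,q}\vec{e}$ to the right-hand side requires careful use of the self-adjointness of all matrices involved together with the structural link between the reducing matrices associated to $W^{\frac{1}{q}}$ and $W^{-\frac{1}{q}}$. Combining the two steps then closes the chain of estimates.
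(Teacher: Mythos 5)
Your first step coincides exactly with the paper's: expanding $|\mathcal{V}_B\mathcal{U}_B\vec{e}|$ via the reducing property of $\mathcal{V}_B$ applied to the vector $\mathcal{U}_B\vec{e}$ and invoking the first reverse H\"older hypothesis gives $|\mathcal{V}_B\mathcal{U}_B\vec{e}|\lesssim|\mathcal{W}'_{2c_dB,q}\mathcal{U}_B\vec{e}|$. The gap is in your second step. You aim to prove the pointwise bound $|\mathcal{W}'_{2c_dB,q}\mathcal{U}_B\vec{e}|\lesssim|\mathcal{W}_{2c_dB,q}\mathcal{W}'_{2c_dB,q}\vec{e}|$ for the \emph{same} vector $\vec{e}$, with the matrices in the order $\mathcal{W}'\mathcal{U}$, by a duality/Cauchy--Schwarz manipulation that you yourself describe as the unresolved ``hard part.'' As stated, this plan does not go through: the only information available is the one-sided comparison $|\mathcal{U}_B\vec{v}|\lesssim|\mathcal{W}_{2c_dB,q}\vec{v}|$ for all $\vec{v}$ (from the second reverse H\"older hypothesis), and for non-commuting positive self-adjoint matrices this does \emph{not} imply $|P U\vec{e}|\lesssim|W P\vec{e}|$ pointwise in $\vec{e}$ (take e.g. $U=W=\bigl(\begin{smallmatrix}1&1\\1&2\end{smallmatrix}\bigr)$, $P=\mathrm{diag}(1,\varepsilon)$, $\vec{e}=e_2$: the left side is of order $1$ while the right side is of order $\varepsilon$). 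The duality identity you write down, $\langle W^{-1/q}(x)\mathcal{U}_B\vec{e},\vec{v}\rangle=\langle\vec{e},\mathcal{U}_BW^{-1/q}(x)\vec{v}\rangle$, naturally produces a bound by $|\vec{e}|\,|\mathcal{U}_B\mathcal{W}'_{2c_dB,q}|_{op}$, i.e.\ an operator-norm statement, not the pointwise comparison you are after.

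The missing idea, which is exactly how the paper closes the argument, is to pass to operator norms and use the elementary identity \eqref{conmutar}, $|AB|_{op}=|BA|_{op}$ for self-adjoint matrices: from step one, $|\mathcal{V}_B\mathcal{U}_B|_{op}\lesssim|\mathcal{W}'_{2c_dB,q}\mathcal{U}_B|_{op}=|\mathcal{U}_B\mathcal{W}'_{2c_dB,q}|_{op}$. Once the order is swapped, $\mathcal{U}_B$ acts first, so its reducing property applies to the vector $\mathcal{W}'_{2c_dB,q}\vec{e}$, and the second reverse H\"older hypothesis gives
\begin{align*}
|\mathcal{U}_B\mathcal{W}'_{2c_dB,q}\vec{e}|\simeq\left(\frac{1}{\mu(B)}\int_B|W^{\frac{1}{q}}(x)\mathcal{W}'_{2c_dB,q}\vec{e}|^{qs}\right)^{\frac{1}{qs}}\lesssim\left(\frac{1}{\mu(2c_dB)}\int_{2c_dB}|W^{\frac{1}{q}}(x)\mathcal{W}'_{2c_dB,q}\vec{e}|^{q}\right)^{\frac{1}{q}}\simeq|\mathcal{W}_{2c_dB,q}\mathcal{W}'_{2c_dB,q}\vec{e}|,
\end{align*}
which yields the conclusion (understood, as it is used in the proof of Theorem \ref{Thm:StrongTypeLrH}, at the level of $\sup_Q|\mathcal{U}_Q\mathcal{V}_Q|_{op}$). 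Without this swap your outline cannot be completed as written, so the proposal has a genuine gap even though its first half and overall strategy are on the right track.
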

\begin{proof}
Note that, taking into account the reverse H\"older inequality in the hypothesis,
\begin{align*}
|\mathcal{V}_{B}\mathcal{U}_{B}\vec{e}|&\simeq \left(\dashint_{\mu(B)}|W^{-\frac{1}{q}}(x)\mathcal{U}_{B}\vec{e}|^{rq'}\right)^{\frac{1}{rq'}}\lesssim \left(\dashint_{\mu(2c_{d}B)}|W^{\frac{-1}{q}}(x)\mathcal{U}_{B}\vec{e}|^{q'}\right)^{\frac{1}{q'}}\\
&\simeq |\mathcal{W}'_{2c_{d}B,q}\mathcal{U}_{B}\vec{e}|.
\end{align*}
Hence \[|\mathcal{V}_{B}\mathcal{U}_{B}|_{op}\lesssim  |\mathcal{W}'_{2c_{d}B,q}\mathcal{U}_{B}|_{op}.\]
Now observe that  $|\mathcal{W}'_{2c_{d}B,q}\mathcal{U}_{B}|_{op}=|\mathcal{U}_{B}\mathcal{W}'_{2c_{d}B,q}|_{op}$. Then, again by the reverse H\"older inequality in the hypothesis,
\begin{align*}
|\mathcal{U}_{B}\mathcal{W}'_{2c_{d}B,q}\vec{e}|&\simeq \left(\dashint_{\mu(B)}|W^{\frac{1}{q}}(x)\mathcal{W}'_{2c_{d}B,q}\vec{e}|^{sq}\right)^{\frac{1}{s q}}\\
&\lesssim \left(\dashint_{\mu(2c_{d}B)}|W^{\frac{1}{q}}(x)\mathcal{W}'_{2c_{d}B,q}\vec{e}|^{q}\right)^{\frac{1}{q}}\simeq|\mathcal{W}_{2c_{d}B,q}\mathcal{W}'_{2c_{d}B,q}\vec{e}|
\end{align*}
and we are done.
\end{proof}


By standard arguments it is not hard to settle the following Lemma.
\begin{lem}
\label{lema3MRR} Let $p,r,s\geq1$ and $W$ be a matrix weight. For
every $\eta$-sparse family $\mathcal{S}$, we have that
\begin{align*}
 & \sum_{Q\in\mathcal{S}}\frac{1}{\mu(Q)}\int_{Q}\frac{1}{\mu(Q)}\int_{Q}\left|\left\langle W^{-\frac{1}{p}}(x)\vec{f}(x)k_{Q}(x,y),W^{\frac{1}{p}}(y)\vec{g}(y)\right\rangle \right|dxdy\mu(Q)\\
 & \leq\frac{1}{\eta}\sup_{Q}|\mathcal{U}_{Q}\mathcal{V}_{Q}|_{op}\,\|M_{\mathcal{V},W^{-\frac{1}{p}},r}\|_{L^{p}\to L^{p}}\|M_{\mathcal{U},W^{\frac{1}{p}},s}\|_{L^{p'}\to L^{p'}}\|\vec{f}\|_{L^{p}}\|\vec{g}\|_{L^{p'}}
\end{align*}
where $\ensuremath{\left\Vert k_{Q}(x,y)\right\Vert _{L^{(s',r')}\left[(Q,\frac{d\mu(y)}{\mu(Q)}),(Q,\frac{d\mu(x)}{\mu(Q)})\right]}\leq1}$,
\[
M_{\mathcal{V},W^{-\frac{1}{p}},r}(\vec{f})(z)=\sup_{x\in Q}\left(\frac{1}{\mu(Q)}\int_{Q}\left|(\mathcal{V}_{Q})^{-1}W^{-\frac{1}{p}}(x)\vec{f}(x)\right|^{r}d\mu(x)\right)^{1/r},
\]
\[
M_{\mathcal{U},W^{\frac{1}{p}},s}(\vec{g})(z)=\sup_{x\in Q}\left(\frac{1}{\mu(Q)}\int_{Q}\left|(\mathcal{U}_{Q})^{-1}W^{\frac{1}{p}}(x)\vec{g}(x)\right|^{s}d\mu(x)\right)^{1/s},
\]
and $\{\mathcal{U}_{Q}\}_{Q}$, $\{\mathcal{V}_{Q}\}_{Q}$ are families
of positive definite self-adjoint matrices.
\end{lem}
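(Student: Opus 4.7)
The plan is to decouple the inner product by inserting the reducing matrices $\mathcal{U}_{Q}$ and $\mathcal{V}_{Q}$, to then exploit the mixed-norm bound on $k_{Q}$ via a double application of H\"older's inequality on $Q\times Q$, and finally to pass from the sparse sum over $\mathcal{S}$ to a single integral over $X$ using the disjoint family $\{E_{Q}\}$, closing with one last H\"older in the exponents $(p,p')$.

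Fixing $Q\in\mathcal{S}$, I would use the self-adjointness of $\mathcal{U}_{Q}$ together with the identities $\mathcal{U}_{Q}\mathcal{U}_{Q}^{-1}=\mathcal{V}_{Q}\mathcal{V}_{Q}^{-1}=I$ to rewrite
\[
\langle W^{-\frac{1}{p}}(x)\vec{f}(x),W^{\frac{1}{p}}(y)\vec{g}(y)\rangle=\langle \mathcal{U}_{Q}\mathcal{V}_{Q}\mathcal{V}_{Q}^{-1}W^{-\frac{1}{p}}(x)\vec{f}(x),\mathcal{U}_{Q}^{-1}W^{\frac{1}{p}}(y)\vec{g}(y)\rangle,
\]
and then apply Cauchy--Schwarz together with the operator-norm bound to obtain
\[
|\langle W^{-\frac{1}{p}}(x)\vec{f}(x)k_{Q}(x,y),W^{\frac{1}{p}}(y)\vec{g}(y)\rangle|\leq |\mathcal{U}_{Q}\mathcal{V}_{Q}|_{op}\,|k_{Q}(x,y)|\,h_{Q}(x)\,g_{Q}(y),
\]
where $h_{Q}(x)=|\mathcal{V}_{Q}^{-1}W^{-\frac{1}{p}}(x)\vec{f}(x)|$ and $g_{Q}(y)=|\mathcal{U}_{Q}^{-1}W^{\frac{1}{p}}(y)\vec{g}(y)|$. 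Regarding $d\mu/\mu(Q)$ as a probability measure on $Q$, I would then apply H\"older in $y$ with exponents $(s',s)$ and in $x$ with exponents $(r',r)$, invoking the hypothesis $\|k_{Q}\|_{L^{(s',r')}}\leq 1$, to conclude
\[
\frac{1}{\mu(Q)^{2}}\int_{Q}\int_{Q}|k_{Q}(x,y)|\,h_{Q}(x)\,g_{Q}(y)\,d\mu(x)\,d\mu(y)\leq \left(\frac{1}{\mu(Q)}\int_{Q}h_{Q}^{r}\,d\mu\right)^{\frac{1}{r}}\left(\frac{1}{\mu(Q)}\int_{Q}g_{Q}^{s}\,d\mu\right)^{\frac{1}{s}}.
\]

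For each $z\in E_{Q}$, the two averages on the right are dominated by $M_{\mathcal{V},W^{-1/p},r}(\vec{f})(z)$ and $M_{\mathcal{U},W^{1/p},s}(\vec{g})(z)$ respectively, since $Q$ is among the cubes contributing to the supremum defining these maximal operators. Because $\mu(Q)\leq \eta^{-1}\mu(E_{Q})$ and the family $\{E_{Q}\}_{Q\in\mathcal{S}}$ is pairwise disjoint, multiplying the previous inequality by $\mu(Q)$ and summing over $Q\in\mathcal{S}$ I would bound the whole sparse sum by
\[
\frac{1}{\eta}\sup_{Q}|\mathcal{U}_{Q}\mathcal{V}_{Q}|_{op}\int_{X}M_{\mathcal{V},W^{-1/p},r}(\vec{f})\,M_{\mathcal{U},W^{1/p},s}(\vec{g})\,d\mu,
\]
and finish with H\"older in $(p,p')$ together with the hypothesized $L^{p}\to L^{p}$ and $L^{p'}\to L^{p'}$ boundedness of the two maximal operators.

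The computation is largely routine; the only delicate bookkeeping step is the correct pairing of the mixed-norm exponents $(s',r')$ governing $k_{Q}$ with the H\"older exponents $(s,r)$ acting on $g_{Q}$ and $h_{Q}$, arranged so that $|\mathcal{U}_{Q}\mathcal{V}_{Q}|_{op}$ is the only geometric factor left outside and the resulting averages reproduce exactly the definitions of $M_{\mathcal{V},W^{-1/p},r}$ and $M_{\mathcal{U},W^{1/p},s}$.
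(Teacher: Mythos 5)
Your proof is correct, and it is exactly the standard argument the paper alludes to (the paper states this lemma without proof, noting it follows ``by standard arguments'' adapted from the Euclidean case in the cited reference \cite{MRR}): decouple with the reducing matrices and Cauchy--Schwarz, apply H\"older in $y$ with $(s',s)$ and in $x$ with $(r',r)$ so the mixed norm of $k_Q$ is exactly consumed, pass from the sparse sum to an integral over $X$ via $\mu(Q)\leq\eta^{-1}\mu(E_Q)$ and the disjointness of the $E_Q$, and finish with H\"older in $(p,p')$ and the assumed boundedness of the two auxiliary maximal operators. The exponent bookkeeping you flag is handled correctly, so there is nothing to add.
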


Even though the proof of Theorem \ref{Thm:StrongTypeLrH} could be provided relying upon arguments in \cite{NPTV,MRR,CUIM} we provide a full proof here for reader's convenience.
\begin{proof}[Proof of Theorem \ref{Thm:StrongTypeLrH}]
It is clear that it suffices to show that
\[
\|W^{\frac{1}{p}}T(W^{-\frac{1}{p}}\vec{f})\|_{L^{p}}\leq\kappa\||\vec{f}|\|_{L^{p}}
\]
where $\kappa$ is the constant depending on the  constant of the weight in each case.

First we observe that by our sparse domination result and
Lemma \ref{lema3MRR}
we have that
\begin{align}
 & \left\Vert W^{\frac{1}{p}}T(W^{-\frac{1}{p}}\vec{f})\right\Vert _{L^{p}}\nonumber \\
 & ={\displaystyle \sup_{\|g\|_{L^{p'}}\leq1}\int_{X}\left\vert \left\langle W^{\frac{1}{p}}T(W^{-\frac{1}{p}}\vec{f}),g\right\rangle \right\vert d\mu}\nonumber \\
 & ={\displaystyle \sup_{\|g\|_{L^{p'}}\leq1}\int_{X}\left\vert \left\langle T(W^{-\frac{1}{p}}\vec{f}),W^{\frac{1}{p}}g\right\rangle \right\vert d\mu}\nonumber \\
 & \leq{\displaystyle \sup_{\|g\|_{L^{p'}}\leq1}\sum_{j=1}^{m}\sum_{Q\in\mathcal{S}}\frac{1}{\mu(Q)}\int_{Q}\frac{1}{\mu(Q)}\int_{Q}\left|\left\langle W^{-\frac{1}{p}}(x)\vec{f}(x)k_{Q}(x,y),W^{\frac{1}{p}}(y)\vec{g}(y)\right\rangle \right|dxdy\mu(Q)}\nonumber \\
 & \leq\sum_{j=1}^{m}{\displaystyle \sup_{\|g\|_{L^{p'}}\leq1}2c_{n,T}{\displaystyle \sup_{Q}|\mathcal{U}_{Q}\mathcal{V}_{Q}|_{op}\left\Vert M_{\mathcal{V},W^{-\frac{1}{p}},r}\right\Vert _{L^{p}\to L^{p}}\left\Vert M_{\mathcal{U},W^{\frac{1}{p}},1}\right\Vert _{L^{p'}\to L^{p'}}\left\Vert \vec{f}\right\Vert _{L^{p}}\left\Vert \vec{g}\right\Vert _{L^{p'}}}}\nonumber \\
 & \leq2mc_{n,T}{\displaystyle \sup_{Q}|\mathcal{U}_{Q}\mathcal{V}_{Q}|_{op}\left\Vert M_{\mathcal{V},W^{-\frac{1}{p}},r}\right\Vert _{L^{p}\to L^{p}}\left\Vert M_{\mathcal{U},W^{\frac{1}{p}},1}\right\Vert _{L^{p'}\to L^{p'}}\left\Vert \vec{f}\right\Vert _{L^{p}}}\label{estimacion}
\end{align}
where each $\mathcal{S}_{j}$ is a sparse family and $\mathcal{U}=\left\lbrace \mathcal{U}_{Q}\right\rbrace $
and $\mathcal{V}=\left\lbrace \mathcal{V}_{Q}\right\rbrace $ are families
of positive self-adjoint matrices.

We begin establishing \eqref{eq:AprLrH}. We have to show that 
\[
\sup_{Q}|\mathcal{U}_{Q}\mathcal{V}_{Q}|_{op}\left\Vert M_{\mathcal{V},W^{-\frac{1}{p}},r}\right\Vert _{L^{p}\to L^{p}}\left\Vert M_{\mathcal{U},W^{\frac{1}{p}},1}\right\Vert _{L^{p'}\to L^{p'}}\lesssim[W]_{A_{\frac{p}{r}}}^{\frac{1}{p}}[W^{-\frac{r}{p}(\frac{p}{r})'}]_{A_{\infty,(\frac{p}{r})'}^{sc}}^{\frac{1}{p}}[W]_{A_{\infty,\frac{p}{r}}^{sc}}^{\frac{1}{p'}}
\]
Let us choose $\mathcal{V}_{Q}=\mathcal{A}_{Q}^{\frac{1}{r}}$, where
\[
\left|\mathcal{A}_{Q}\vec{e}\right|\simeq\left(\frac{1}{\mu(1Q)}\int_{1Q}\left|W^{-\frac{r}{p}}\vec{e}\right|^{\left(\frac{p}{r}\right)'\alpha}d\mu\right)^{\frac{1}{\left(\frac{p}{r}\right)'\alpha}}
\]
with
\[
\alpha=1+\dfrac{1}{\tau_{c_{\rho}\mu}\left[W^{-\frac{r}{p}\left(\frac{p}{r}\right)'}\right]_{A_{\infty,\left(\frac{p}{r}\right)'}^{sc}}}
\]
and $\mathcal{U}_{Q}=\mathcal{B}_{Q}^{\frac{1}{r}}$, where 
\[
\left|\mathcal{B}_{Q}\vec{e}\right|\simeq\left(\frac{1}{\mu(1Q)}\int_{1Q}\left|W^{\frac{r}{p}}\vec{e}\right|^{\left(\frac{p}{r}\right)\beta}d\mu\right)^{\frac{1}{\left(\frac{p}{r}\right)\beta}}
\]
with
\[
\beta=1+\dfrac{1}{\tau_{c_{\rho}\mu}\left[W\right]_{A_{\infty,\left(\frac{p}{r}\right)}^{sc}}}.
\]
By Lemma \ref{lem:alphaFuera} we have that 
\[
|\mathcal{U}_{Q}\mathcal{V}_{Q}|_{op}=|\mathcal{A}_{Q}^{\frac{1}{r}}\mathcal{B}_{Q}^{\frac{1}{r}}|_{op}\lesssim|\mathcal{A}_{Q}\mathcal{B}_{Q}|_{op}^{\frac{1}{r}}
\]
and by Lemma \ref{lem:ApFromRH} combined with the reverse Hölder
inequality (Lemma \ref{lem:RevHolderDouble}), 
\[
|\mathcal{A}_{Q}\mathcal{B}_{Q}|_{op}^{\frac{1}{r}}\lesssim\left|\mathcal{W}_{2c_{\rho}B,\frac{p}{r}}\mathcal{W}'_{2c_{\rho}B,\frac{p}{r}}\right|_{op}^{\frac{1}{r}}.
\]
Consequently
\begin{align}
{\displaystyle \sup_{Q}|\mathcal{U}_{Q}\mathcal{V}_{Q}|_{op}} & \lesssim{\displaystyle \sup_{B}\left|\mathcal{W}_{2c_{\rho}B,\frac{p}{r}}\mathcal{W}'_{2c_{\rho}B,\frac{p}{r}}\right|_{op}^{\frac{1}{r}}\simeq[W]_{A_{\frac{p}{r}}}^{\frac{1}{p/r}\cdot\frac{1}{r}}=[W]_{A_{\frac{p}{r}}}^{\frac{1}{p}}}.\label{T3.1}
\end{align}
We continue showing that
\begin{equation}
\left\Vert M_{\mathcal{V},W^{-\frac{1}{p}},r}\right\Vert _{L^{p}\to L^{p}}\lesssim[W^{-\frac{r}{p}\left(\frac{p}{r}\right)'}]_{A_{\infty,\left(\frac{p}{r}\right)'}^{sc}}^{\frac{1}{p}}.\label{eq:McalV}
\end{equation}
Let us call $B=1Q$. Using the matrix reverse Hölder and Lemma \ref{lem:alphaFuera}
we have that 
\begin{align*}
 & \left(\frac{1}{\mu(Q)}\int_{Q}\left|(\mathcal{V}_{Q})^{-1}W^{-\frac{1}{p}}(x)\vec{h}(x)\right|^{r}d\mu(x)\right)^{\frac{1}{r}}\\
 & \lesssim\left(\frac{1}{\mu(B)}\int_{B}\left|\mathcal{A}_{Q}^{-\frac{1}{r}}W^{-\frac{1}{p}}(x)\right|_{op}^{r\left(\frac{p}{r}\right)'\alpha}d\mu(x)\right)^{\frac{1}{r\left(\frac{p}{r}\right)'\alpha}}\left(\frac{1}{\mu(B)}\int_{B}|\vec{h}|^{r\left(\left(\frac{p}{r}\right)'\alpha\right)'}d\mu(x)\right)^{\frac{1}{r\left(\left(\frac{p}{r}\right)'\alpha\right)'}}\\
 & \lesssim\left(\frac{1}{\mu(B)}\int_{B}\left|\mathcal{A}_{Q}^{-1}W^{-\frac{r}{p}}(x)\right|_{op}^{\left(\frac{p}{r}\right)'\alpha}d\mu(x)\right)^{\frac{1}{r\left(\frac{p}{r}\right)'\alpha}}\left(\frac{1}{\mu(B)}\int_{B}|\vec{h}|^{r\left(\left(\frac{p}{r}\right)'\alpha\right)'}d\mu(x)\right)^{\frac{1}{r\left(\left(\frac{p}{r}\right)'\alpha\right)'}}\\
 & =\left(\frac{1}{\mu(B)}\int_{B}\left|W^{-\frac{r}{p}}(x)\mathcal{A}_{Q}^{-1}\right|_{op}^{\left(\frac{p}{r}\right)'\alpha}d\mu(x)\right)^{\frac{1}{r\left(\frac{p}{r}\right)'\alpha}}\left(\frac{1}{\mu(B)}\int_{B}|\vec{h}|^{r\left(\left(\frac{p}{r}\right)'\alpha\right)'}d\mu(x)\right)^{\frac{1}{r\left(\left(\frac{p}{r}\right)'\alpha\right)'}}.
\end{align*}
Bearing in mind the definition of $\mathcal{A}_{Q}$, 
\begin{align*}
 & \left(\frac{1}{\mu(B)}\int_{B}\left|W^{-\frac{r}{p}}(x)\mathcal{A}_{Q}^{-1}\right|_{op}^{r\left(\frac{p}{r}\right)'\alpha}d\mu(x)\right)^{\frac{1}{r\left(\frac{p}{r}\right)'\alpha}}\left(\frac{1}{\mu(B)}\int_{B}|\vec{h}|^{r\left(\left(\frac{p}{r}\right)'\alpha\right)'}d\mu(x)\right)^{\frac{1}{r\left(\left(\frac{p}{r}\right)'\alpha\right)'}}\\
\simeq & \left|\mathcal{A}_{Q}\mathcal{A}_{Q}^{-1}\right|_{op}^{\frac{1}{r}}\left(\frac{1}{\mu(B)}\int_{B}|\vec{h}|^{r\left(\left(\frac{p}{r}\right)'\alpha\right)'}d\mu(x)\right)^{\frac{1}{r\left(\left(\frac{p}{r}\right)'\alpha\right)'}}\\
= & \left(\frac{1}{\mu(B)}\int_{B}|\vec{h}|^{r\left(\left(\frac{p}{r}\right)'\alpha\right)'}d\mu(x)\right)^{\frac{1}{r\left(\left(\frac{p}{r}\right)'\alpha\right)'}}.
\end{align*}

By our choice of $\alpha$, and the estimates above, 
\begin{align*}
\left\Vert M_{\mathcal{V},W^{-\frac{1}{p}},r}(\vec{h})\right\Vert _{L^{p}} & \lesssim\left\Vert M_{r\left(\left(\frac{p}{r}\right)'\alpha\right)'}(|\vec{h}|)\right\Vert {}_{L^{p}}\lesssim\left[\left(\dfrac{p}{r\left(\left(\frac{p}{r}\right)'\alpha\right)'}\right)'\right]^{\frac{1}{r\left(\left(\frac{p}{r}\right)'\alpha\right)'}}\left\Vert \vec{h}\right\Vert {}_{L^{p}}\\
 & \leq[W^{-\frac{r}{p}\left(\frac{p}{r}\right)'}]_{A_{\infty,\left(\frac{p}{r}\right)'}^{sc}}^{\frac{1}{p}}\left\Vert \vec{h}\right\Vert {}_{L^{p}}.
\end{align*}
Consequently, \eqref{eq:McalV} holds.

Now we show that
\begin{equation}
\left\Vert M_{\mathcal{U},W^{\frac{1}{p}},1}\right\Vert _{L^{p'}\to L^{p'}}\lesssim[W]_{A_{\infty,\frac{p}{r}}^{sc}}^{\frac{1}{p'}}\label{eq:McalU}.
\end{equation}
Arguing as above, calling $B=1Q$, we have that
\begin{align*}
 & \frac{1}{\mu(Q)}\int_{Q}|\mathcal{U}_{Q}W^{\frac{1}{p}}(x)\vec{g}(x)|d\mu(x)\\
\lesssim & \left(\frac{1}{\mu(B)}\int_{B}|\mathcal{B}_{Q}^{-\frac{1}{r}}W^{\frac{1}{p}}(x)|_{op}^{p\beta}d\mu(x)\right)^{\frac{1}{p\beta}}\left(\frac{1}{\mu(B)}\int_{B}|\vec{g}(x)|^{(p\beta)'}d\mu(x)\right)^{\frac{1}{(p\beta)'}}\\
\lesssim & \left(\frac{1}{\mu(B)}\int_{B}|\mathcal{B}_{Q}^{-1}W^{\frac{r}{p}}(x)|_{op}^{\frac{p}{r}\beta}d\mu(x)\right)^{\frac{1}{p\beta}}\left(\frac{1}{\mu(B)}\int_{B}|\vec{g}(x)|^{(p\beta)'}d\mu(x)\right)^{\frac{1}{(p\beta)'}}\\
\simeq & \left(\frac{1}{\mu(B)}\int_{B}|W^{\frac{r}{p}}(x)\mathcal{B}_{Q}^{-1}|_{op}^{\frac{p}{r}\beta}d\mu(x)\right)^{\frac{1}{p\beta}}\left(\frac{1}{\mu(B)}\int_{B}|\vec{g}(x)|^{(p\beta)'}d\mu(x)\right)^{\frac{1}{(p\beta)'}}.
\end{align*}
Now by the definition of $\mathcal{B}_{Q},$ 
\begin{align*}
 & \left(\frac{1}{\mu(B)}\int_{B}|W^{\frac{r}{p}}(x)\mathcal{B}_{Q}^{-1}|_{op}^{\frac{p}{r}\beta}d\mu(x)\right)^{\frac{1}{p\beta}}\left(\frac{1}{\mu(B)}\int_{B}|\vec{g}(x)|^{(p\beta)'}d\mu(x)\right)^{\frac{1}{(p\beta)'}}\\
\simeq & |\mathcal{B}_{Q}\mathcal{B}_{Q}^{-1}|_{op}^{\frac{1}{r}}\left(\frac{1}{\mu(B)}\int_{B}|\vec{g}(x)|^{(p\beta)'}d\mu(x)\right)^{\frac{1}{(p\beta)'}}\\
\simeq & \left(\frac{1}{\mu(B)}\int_{B}|\vec{g}(x)|^{(p\beta)'}d\mu(x)\right)^{\frac{1}{(p\beta)'}}.
\end{align*}
Consecuently, bearing in mind our choice for $\beta,$ 
\[
\left\Vert M_{\mathcal{U},W^{\frac{1}{p}},1}\right\Vert _{L^{p'}\to L^{p'}}\leq\left\Vert M_{(p\beta)'}\right\Vert _{L^{p'}\to L^{p'}}\lesssim\left[\left(\dfrac{p'}{(p\beta)'}\right)'\right]^{\frac{1}{(p\beta)'}}\lesssim[W]_{A_{\infty,\frac{p}{r}}^{sc}}^{\frac{1}{p'}}.
\]
Gathering the estimates above,
\[
\left\Vert W^{\frac{1}{p}}T(W^{-\frac{1}{p}}\vec{f})\right\Vert _{L^{p}}\lesssim[W]_{A_{\frac{p}{r}}}^{\frac{1}{p}}[W^{-\frac{r}{p}\left(\frac{p}{r}\right)'}]_{A_{\infty,\left(\frac{p}{r}\right)'}^{sc}}^{\frac{1}{p}}[W]_{A_{\infty,\frac{p}{r}}^{sc}}^{\frac{1}{p'}}\left\Vert \vec{f}\right\Vert _{L^{p}}
\]
and we are done.

Now we settle \eqref{eq:AqLrH}. Again, it suffices to make a suitable choice of $\mathcal{U}_{Q}$ and 
$\mathcal{V}_{Q}$ in order to provide bounds for
\[
\sup_{Q}|\mathcal{U}_{Q}\mathcal{V}_{Q}|_{op}\,\qquad\left\Vert M_{\mathcal{V},W^{-\frac{1}{p}},r}\right\Vert _{L^{p}\to L^{p}},\qquad\left\Vert M_{\mathcal{U},W^{\frac{1}{p}},1}\right\Vert _{L^{p'}\to L^{p'}}.
\]
For the case $q=1$ we choose $\mathcal{V}_{Q}=\mathcal{A}_{2c_{\rho}B}^{-\frac{1}{p}}$,
where $B=1Q$, and
\[
\left|\mathcal{A}_{2c_{\rho}B}\vec{e}\right|\simeq\int_{2c_{\rho}B}|W(x)\vec{e}|d\mu(x).
\]
We also choose, $\mathcal{U}_{Q}=\mathcal{V}_{Q}^{-1}$. Hence $\sup_{Q}|\mathcal{U}_{Q}\mathcal{V}_{Q}|_{op}=1$.

Let us see that
\[
\left\Vert M_{\mathcal{V},W^{-\frac{1}{p}},r}\right\Vert _{L^{p}\rightarrow L^{p}}\lesssim[W]_{A_{1}}^{1/p}.
\]
Indeed, using the definition of $\mathcal{A}_{2c_{\rho}B}$ and taking
into account that $W\in A_{1}$, 
\begin{align*}
 & \left(\frac{1}{\mu(Q)}\int_{Q}\left|\mathcal{V}_{B}^{-1}W^{-\frac{1}{p}}(x)\vec{f}(x)\right|^{r}d\mu(x)\right)^{\frac{1}{r}}=\left(\frac{1}{\mu(Q)}\int_{Q}\left|\mathcal{A}_{2c_{\rho}B}^{\frac{1}{p}}W^{-\frac{1}{p}}(x)\vec{f}(x)\right|^{r}d\mu(x)\right)^{\frac{1}{r}}\\
 & \leq\left(\frac{1}{\mu(Q)}\int_{Q}\left|\mathcal{A}_{2c_{\rho}B}W^{-1}(x)\right|_{op}^{\frac{r}{p}}\left|\vec{f}(x)\right|^{r}d\mu(x)\right)^{\frac{1}{r}}\leq\\
 & \lesssim\left(\frac{1}{\mu(2c_{\rho}B)}\int_{2c_{\rho}B}\left({\displaystyle \sum_{j=1}^{n}\frac{1}{\mu(2c_{\rho}B)}\int_{2c_{\rho}B}\left|W(y)W^{-1}(x)\vec{e}_{j}\right|}d\mu(y)\right)^{\frac{r}{p}}\left|\vec{f}(x)\right|^{r}d\mu(x)\right)^{\frac{1}{r}}\\
 & \leq n^{\frac{1}{p}}\left(\frac{1}{\mu(2c_{\rho}B)}\int_{2c_{\rho}B}\left({\displaystyle \frac{1}{\mu(2c_{\rho}B)}\int_{2c_{\rho}B}\left|W(y)W^{-1}(x)\right|_{op}}d\mu(y)\right)^{\frac{r}{p}}\left|\vec{f}(x)\right|^{r}d\mu(x)\right)^{\frac{1}{r}}\\
 & \lesssim_{n,p}[W]_{A_{1}}^{\frac{1}{p}}\left(\frac{1}{\mu(2c_{\rho}B)}\int_{2c_{\rho}B}\left|\vec{f}(x)\right|^{r}d\mu(x)\right)^{\frac{1}{r}}.
\end{align*}
This yields that
\[
\left\Vert M_{\mathcal{V},W^{-\frac{1}{p}},r}\right\Vert _{L^{p}\rightarrow L^{p}}\leq\left\Vert M_{r}\right\Vert _{L^{p}\rightarrow L^{p}}[W]_{A_{1}}^{\frac{1}{p}}\lesssim[W]_{A_{1}}^{\frac{1}{p}}\left[\left(\frac{p}{r}\right)'\right]{}^{\frac{1}{r}}
\]
as we wanted to show.

Let us show now that 
\[
\left\Vert M_{\mathcal{U},W^{-\frac{1}{p}},r}\right\Vert _{L^{p'}\rightarrow L^{p'}}\leq[W]_{A_{\infty,1}^{sc}}^{1/p'}.
\]
For that purpose we choose $\beta=1+\dfrac{1}{\tau_{c_{\rho}\mu}[W]_{A_{\infty,1}^{sc}}}$.
Then we can argue as follows

\begin{align*}
 & \frac{1}{\mu(Q)}\int_{Q}\left|(\mathcal{U}_{B})^{-1}W^{\frac{1}{p}}(z)\vec{g}(z)\right|d\mu(z)=\frac{1}{\mu(Q)}\int_{Q}\left|\mathcal{A}_{2c_{\rho}B}^{-\frac{1}{p}}W^{\frac{1}{p}}(z)\vec{g}(z)\right|d\mu(z)\\
 & \lesssim\left(\frac{1}{\mu(1Q)}\int_{1Q}\left|\mathcal{A}_{2c_{\rho}B}^{-\frac{1}{p}}W^{\frac{1}{p}}(z)\right|_{op}^{p\beta}d\mu(z)\right)^{\frac{1}{p\beta}}\left(\frac{1}{\mu(Q)}\int_{Q}\left|\vec{g}(z)\right|^{(p\beta')}\right)^{\frac{1}{(p\beta)'}}\\
 & \lesssim\left(\frac{1}{\mu(1Q)}\int_{1Q}\left|W(z)\mathcal{A}_{2c_{\rho}B}^{-1}\right|_{op}^{\beta}d\mu(z)\right)^{\frac{1}{p\beta}}\left(\frac{1}{\mu(Q)}\int_{Q}\left|\vec{g}(z)\right|^{(p\beta')}\right)^{\frac{1}{(p\beta)'}}\\
 & \lesssim\left(\frac{1}{\mu(2c_{\rho}B)}\int_{2c_{\rho}B}\left|W(z)\mathcal{A}_{2c_{\rho}B}^{-1}\right|_{op}d\mu(z)\right)^{\frac{1}{p}}\left(\frac{1}{\mu(Q)}\int_{Q}\left|\vec{g}(z)\right|^{(p\beta')}\right)^{\frac{1}{(p\beta)'}}\\
 & \simeq\left|\mathcal{A}_{2c_{\rho}B}\mathcal{A}_{2c_{\rho}B}^{-1}\right|_{op}^{\frac{1}{p}}\left(\frac{1}{\mu(Q)}\int_{Q}\left|\vec{g}(z)\right|^{(p\beta')}\right)^{\frac{1}{(p\beta)'}}\\
 & \simeq\left(\frac{1}{\mu(Q)}\int_{Q}\left|\vec{g}(z)\right|^{(p\beta')}\right)^{\frac{1}{(p\beta)'}}.
\end{align*}
At this point, this allows us to conclude that
\[
\left\Vert M_{\mathcal{U},W^{-\frac{1}{p}},r}\right\Vert _{L^{p'}\rightarrow L^{p'}}\lesssim\left\Vert M_{(p\beta)'}\right\Vert _{L^{p'}\rightarrow L^{p'}}\lesssim\left[\left(\dfrac{p'}{(p\beta)'}\right)'\right]^{\frac{1}{(p\beta)'}}\lesssim[W]_{A_{\infty,1}^{sc}}^{\frac{1}{p'}}
\]
and combining the estimates above we are done.

For the case $1<q<p$, arguing as above, it is enough to choose matrices
$\mathcal{U}_{Q}$ and $\mathcal{V}_{Q}$ such that
\[
\sup_{Q}\left|\mathcal{U}_{Q}\mathcal{V}_{Q}\right|_{op}\leq1,\qquad\left\Vert M_{\mathcal{V},W^{-\frac{1}{p}},r}\right\Vert _{L^{p}\rightarrow L^{p}}\lesssim\left[\left(\dfrac{p}{rq}\right)'\right]^{\frac{1}{rq}},\qquad\left\Vert M_{\mathcal{U},W^{-\frac{1}{p}},r}\right\Vert _{L^{p'}\rightarrow L^{p'}}\lesssim[W]_{A_{\infty,q}^{sc}}^{1/p'}[W]_{A_{q}}^{1/p}.
\]
Choosing $\mathcal{V}_{Q}=\mathcal{A}_{2c_{\rho}B}^{\frac{q}{p}}$,
where $B=1Q$ and
\[
\left|\mathcal{A}_{2c_{\rho}B}\vec{e}\right|\simeq\left(\frac{1}{\mu(2c_{\rho}B)}\int_{2c_{\rho}B}|W^{-\frac{1}{q}}(z)\vec{e}|^{q'}d\mu(z)\right)^{\frac{1}{q'}}\,,
\]
and also $\mathcal{U}_{Q}=\mathcal{V}_{Q}^{-1}$, its clear that the
first estimate above holds. For the second, since $0<r\frac{q}{p}<1$,
we have that
\begin{align*}
 & \left(\frac{1}{\mu(Q)}\int_{Q}\left|\mathcal{V}_{Q}^{-1}W^{-\frac{1}{p}}(x)\vec{f}(x)\right|^{r}d\mu(x)\right)^{\frac{1}{r}}=\left(\frac{1}{\mu(Q)}\int_{Q}\left|\mathcal{A}_{2c_{\rho}B}^{-\frac{q}{p}}W^{-\frac{1}{p}}(x)\vec{f}(x)\right|^{r}d\mu(x)\right)^{\frac{1}{r}}\\
\leq & \left(\frac{1}{\mu(Q)}\int_{Q}\left|\mathcal{A}_{2c_{\rho}B}^{-\frac{q}{p}}W^{-\frac{1}{q}\frac{q}{p}}(x)\right|_{op}^{r}\left|\vec{f}(x)\right|^{r}d\mu(x)\right)^{\frac{1}{r}}\leq\left(\frac{1}{\mu(Q)}\int_{Q}\left|\mathcal{A}_{2c_{\rho}B}^{-1}W^{-\frac{1}{q}}(x)\right|^{r\frac{q}{p}}\left|\vec{f}(x)\right|^{r}d\mu(x)\right)^{\frac{1}{r}}\\
\leq & \left(\frac{1}{\mu(Q)}\int_{Q}\left|\mathcal{A}_{2c_{\rho}B}^{-1}W^{-\frac{1}{q}}(x)\right|^{r\frac{q}{p}q'}d\mu(x)\right)^{\frac{1}{rq'}}\left(\frac{1}{\mu(Q)}\int_{Q}\left|\vec{f}(x)\right|^{rq}d\mu(x)\right)^{\frac{1}{r}}\\
\lesssim & \left(\frac{1}{\mu(2c_{\rho}B)}\int_{2c_{\rho}B}\left|\mathcal{A}_{2c_{\rho}B}^{-1}W^{-\frac{1}{q}}(x)\right|^{q'}d\mu(x)\right)^{\frac{1}{q'}\frac{q}{p}}\left(\frac{1}{\mu(Q)}\int_{Q}\left|\vec{f}(x)\right|^{rq}d\mu(x)\right)^{\frac{1}{r}}\\
 & \simeq\left|\mathcal{A}_{2c_{\rho}B}^{-1}\mathcal{A}_{2c_{\rho}B}\right|_{op}^{\frac{q}{p}}\left(\frac{1}{\mu(Q)}\int_{Q}\left|\vec{f}(x)\right|^{rq}d\mu(x)\right)^{\frac{1}{r}}.
\end{align*}
Hence
\[
\left\Vert M_{\mathcal{V},W^{-\frac{1}{p}},r}\right\Vert _{L^{p}\rightarrow L^{p}}\lesssim\left\Vert M_{rq}\right\Vert _{L^{p}\rightarrow L^{p}}\lesssim\left[\left(\dfrac{p}{rq}\right)'\right]^{\frac{1}{rq}}
\]
Now we estimate the remaining term. Let $\beta=1+\dfrac{1}{\tau_{c_{\rho}\mu}[W]_{A_{\infty,q}^{sc}}}$.
Reverse Hölder inequality allows us to argue as follows

\begin{align*}
\quad & \frac{1}{\mu(Q)}\int_{Q}|\mathcal{U}_{Q}^{-1}W^{\frac{1}{p}}(y)\vec{g}(y)|d\mu(y)\leq\frac{1}{\mu(Q)}\int_{Q}|\mathcal{A}_{2c_{\rho}B}^{\frac{q}{p}}W^{\frac{1}{q}\frac{q}{p}}(y)\|\vec{g}(y)|d\mu(y)\\
 & \lesssim\left(\frac{1}{\mu(2c_{\rho}B)}\int_{2c_{\rho}B}|\mathcal{A}_{Q}W^{\frac{1}{q}}(x)|_{op}^{\frac{q}{p}p\beta}d\mu(x)\right)^{\frac{1}{p\beta}}\left(\frac{1}{\mu(2c_{\rho}B)}\int_{2c_{\rho}B}|\vec{g}|^{(p\beta)'}d\mu(x)\right)^{\frac{1}{(p\beta)'}}\\
 & \leq\left(\frac{1}{\mu(2c_{\rho}B)}\int_{2c_{\rho}B}|\mathcal{A}_{2c_{\rho}B}W^{\frac{1}{q}}(x)|_{op}^{q\beta}d\mu(x)\right)^{\frac{1}{q\beta}\frac{q}{p}}\left(\frac{1}{\mu(2c_{\rho}B)}\int_{2c_{\rho}B}|\vec{g}|^{(p\beta)'}d\mu(x)\right)^{\frac{1}{(p\beta)'}}\\
 & \lesssim\left(\frac{1}{\mu(2c_{\rho}B)}\int_{Q}|\mathcal{A}_{2c_{\rho}B}W^{\frac{1}{q}}(x)|_{op}^{q}d\mu(x)\right)^{\frac{1}{p}}\left(\frac{1}{\mu(2c_{\rho}B)}\int_{2c_{\rho}B}|\vec{g}|^{(p\beta)'}d\mu(x)\right)^{\frac{1}{(p\beta)'}}\\
 & \lesssim[W]_{A_{q}}^{\frac{1}{p}}\left(\frac{1}{|2c_{\rho}B|}\int_{2c_{\rho}B}|\vec{g}|^{(p\beta)'}d\mu(x)\right)^{\frac{1}{(p\beta)'}}.
\end{align*}
Finally, for this choice of $\beta$ we have that
\[
\|M_{\mathcal{U},W^{\frac{1}{p}},1}\|_{L^{p'}\rightarrow L^{p'}}  \lesssim[W]_{A_{q}}^{\frac{1}{p}}\|M_{(p\beta)'}\|_{L^{p'}\rightarrow L^{p'}}.
\lesssim[W]_{A_{q}}^{\frac{1}{p}}\left[\left(\frac{p'}{(p\beta)'}\right)'\right]^{\frac{1}{(p\beta)'}}\lesssim[W]_{A_{q}}^{\frac{1}{p}}[W]_{A_{\infty,q}^{sc}}^{\frac{1}{p'}},
\]
and this ends the proof.
\end{proof}

\subsection{Endpoint estimates}
Up until now we have dealt with strong type estimates. In order to study strong type estimates, we have actually studied unweighted estimates such as: 
\[
\left\| W^{1/p} T(W^{-1/p} \vec{f}) \right\|_{L^p} \lesssim \|\vec{f}\|_{L^p}
\]

Hence a reasonable choice for weak type estimates is to deal with weighted operators as well.
\[
\left| \left\{ x\in X : |W(x) T(W^{-1} \vec{f})(x)| > t \right\} \right| \lesssim \frac{1}{t} \int_{X} |\vec{f}(x)| \, dx
\]
That is particularly useful when dealing with endpoint estimates as it was shown in \cite{CUIMPRR}. Here we extend to spaces of homogeneous setting the result there for Calder\'on-Zygmund and $L^\infty$-Hörmander operators since they share the same convex body domination.
\begin{thm}	
	\label{endpoint}
	Let $(X,d,\mu)$ a space of homogeneous type and $W \in A_1$. Then 
	\begin{equation}
		\label{weakendpointT}
		\left| \left\{ x \in X: |W(x) T(W^{-1} \vec{f})(x)| > t \right\} \right| \lesssim [W]_{A_1} [W]_{A_{\infty}^{\mathrm{sc},1}} \frac{1}{t} \int_{X} |\vec{f}(x)| d\mu(x)
	\end{equation}
	where $T$ is a Calder\'on-Zygmund operator or a $L^\infty$-Hörmander operator and 
\begin{equation}
		\label{weakendpointM_W}
		\left| \left\{ x \in X: |M_{W,1} \vec{f}(x)| > t \right\} \right| \lesssim [W]_{A_1} [W]_{A_{\infty}^{\mathrm{sc},1}} \frac{1}{t} \int_{X} |\vec{f}(x)| d\mu(x)
	\end{equation}

\begin{proof}
	Given a dyadic cube we consider the reducing matrix
	\[|\mathcal{W}_{1,Q}\vec{v}|\simeq\dfrac{1}{\mu(2c_d 1Q)}\int_{2c_d 1Q}|W(x)\vec{v}|d\mu(x).\]
	
	For Calder\'on-Zygmund operators, applying the convex body domination theorem we have that there exists  $m\in\mathbb{N}$ such that there
	are dyadic systems $\mathcal{D}_{1},\dots,\mathcal{D}_{m}$ and $m$ $\frac{1}{2}$-sparse families $\mathcal{S}_{i}\subset\mathcal{D}_{i}$
	such that 
	\[
	\begin{split}
	|W(x) T(W^{-1} \vec{f})(x)|&=\left\vert c_{n,d,T}\sum_{j=1}^{m}\sum_{Q\in\mathcal{S}_{j}}\frac{1}{\mu(Q)}\int_{Q}k_{Q}(x,y)W(x)W^{-1}(y)\vec{f}(y)d\mu(y)\chi_{Q}(x)\right\vert\\
	&\lesssim \sum_{j=1}^{m}\sum_{Q\in\mathcal{S}_{j}}\frac{1}{\mu(Q)}\int_{Q}|W(x)W^{-1}(y)\vec{f}(y)|d\mu(y)\chi_{Q}(x)\\
	&\lesssim [W]_{A_1}\sum_{j=1}^{m}\sum_{Q\in\mathcal{S}_{j}}|W(x)\mathcal{W}_{1,Q}^{-1}|_{op}\dfrac{1}{\mu(Q)}\int_{Q}|\vec{f}(y)|d\mu(y)\chi_{Q}(x)\\
	&:=[W]_{A_1}\sum_{j=1}^m \mathcal{T}_{S_j,W}(\vec{f})(x)\,.
	\end{split}
	\] 
Observe that in this case we can split he family in order to make it $\frac{4}{5}$-sparse, and therefore $\frac{5}{4}$-Carleson \cite[Theorem 3.3]{DGKLWY}. 
		
Analogously, for the maximal function, arguing as we did in the proof of Theorem \ref{thm:StrongMax}, we have a  $\frac{5}{4}$-Carleson dyadic family $\mathcal{S}(J)\subset \mathcal{D}(J)$ such that	
	\[
	\begin{split}
		M_{J,W}\vec{f}(x)\leq & \sum_{Q\in\mathcal{S}(J)}\left|W(x)\mathcal{W}_{1,Q}^{-1}\right|_{op} \frac{1}{\mu(Q)}\int_{Q}\left|\mathcal{W}_{1,Q}W^{-1}(y)\vec{f}(y)\right|d\mu(y)\\
		&\lesssim [W]_{A_1}\sum_{Q\in\mathcal{S}(J)}\left|W(x)\mathcal{W}_{1,Q}^{-1}\right|_{op}\dfrac{1}{\mu(Q)}\int_{Q}\left|\vec{f}(y)\right|d\mu(y)\\
		&\lesssim [W]_{A_1}\sum_{j=1}^{m}\sum_{Q\in\mathcal{S}_{j}}|W(x)\mathcal{W}_{1,Q}^{-1}|_{op}\dfrac{1}{\mu(Q)}\int_{Q}|\vec{f}(y)|d\mu(y)\chi_{Q}(x).
	\end{split}
	\]	
Hence to settle Theorem \ref{endpoint} it suffices to show that
	\[\mu\left(\left\lbrace \mathcal{T}_{S_j,W}(\vec{f})(x)> t\right\rbrace\right)\lesssim [W]_{A_{1,\infty}^{sc}}\dfrac{1}{t}\int_X|\vec{f}(x)|\]
By homogeneity, it is enough to settle the case $t=1$. To that end, let $P$ be a cube, then we have
\[
\begin{split}
	\mu\left( \left\lbrace \left| \mathcal{T}_{S,W}(\vec{f})(x) \right| > 1 \right\rbrace\cap kP \right) 
	 &\lesssim  \mu\left(\left( \left\lbrace \left| \mathcal{T}_{S,W}(\vec{f})(x) \right| > 1 \right\rbrace 
	\setminus \left\lbrace M\vec{f}(x) > 1 \right\rbrace \right)\cap kP \right)\\
	& \quad + \| \vec{f} \|_{L^1}.
\end{split}
\]
	
Assume by now (we will prove prove this claim in Lemma \ref{medidaG}) that for $$G=\left(\{|\mathcal{T}_{S,W}(\vec{f})(x)|>1\}\setminus \{M\vec{f}(x)>1\}\right)\cap kP$$	
we have that 
\[
\mu(G)\lesssim 2[W]_{A_{1,\infty}^{sc}}\|\vec{f}\|_{L^1}. 
\]
Then, letting $k\to \infty$ this yields 
\[\mu\left( \left\lbrace \left| \mathcal{T}_{S,W}(\vec{f})(x) \right| > 1 \right\rbrace 
\setminus \left\lbrace M\vec{f}(x) > 1 \right\rbrace \right) \lesssim 2[W]_{A_{1,\infty}^{sc}}\|\vec{f}\|_{L^1}\, .  \]
Gathering the preceding estimates, 	
\[
\begin{split}
	\mu\left( \left\lbrace \left| \mathcal{T}_{S,W}(\vec{f})(x) \right| > 1 \right\rbrace \right)&\lesssim 2[W]_{A_{1,\infty}^{sc}}\|\vec{f}\|_{L^1 }+ \| \vec{f} \|_{L^1}\\
	&\leq 3[W]_{A_{1,\infty}^{sc}}\|\vec{f}\|_{L^1 }.
\end{split} 
\]
\end{proof}
\end{thm}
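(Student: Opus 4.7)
The plan is to prove Theorem~\ref{endpoint} via three main steps: a sparse pointwise reduction, a level-set split, and the residual measure estimate for the set $G$ (content of the implicit Lemma~\ref{medidaG}). For the sparse reduction, Theorem~\ref{thm:ThmSparse} applied coordinate-wise, together with the matrix $A_{1}$ bound $|W(x)W^{-1}(y)|_{op}\lesssim [W]_{A_{1}}|W(x)\mathcal{W}_{1,Q}^{-1}|_{op}$ (valid on the relevant integration domain through the reducing matrix identity) and the stopping-time argument from the proof of Theorem~\ref{thm:StrongMax}, yields
\[
|W(x)T(W^{-1}\vec{f})(x)|+|M_{W,1}\vec{f}(x)|\lesssim [W]_{A_{1}}\sum_{j=1}^{m}\mathcal{T}_{\mathcal{S}_{j},W}\vec{f}(x),
\]
with $\mathcal{T}_{\mathcal{S},W}\vec{f}(x)=\sum_{Q\in\mathcal{S}}|W(x)\mathcal{W}_{1,Q}^{-1}|_{op}\langle|\vec{f}|\rangle_{Q}\chi_{Q}(x)$ and each $\mathcal{S}_{j}$ that may be assumed $5/4$-Carleson after a routine splitting. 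By homogeneity set $t=1$, localize to a ball $kP$ (and later let $k\to\infty$), and apply the split $\{\mathcal{T}_{\mathcal{S},W}\vec{f}>1\}\cap kP\subset \{M\vec{f}>1\}\cup G$; the weak $(1,1)$ of the Hardy--Littlewood maximal operator controls $\{M\vec{f}>1\}$, so everything reduces to showing $\mu(G)\lesssim [W]_{A_{\infty,1}^{sc}}\|\vec{f}\|_{L^{1}}$.

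To prove the latter, choose the reverse H\"older exponent $r=1+(\tau_{c_{d}\mu}[W]_{A_{\infty,1}^{sc}})^{-1}$; Lemma~\ref{lem:RevHolderDouble} yields, uniformly in $Q$,
\[
\Bigl(\dashint_{Q}|W\,\mathcal{W}_{1,Q}^{-1}|_{op}^{r}d\mu\Bigr)^{1/r}\lesssim \dashint_{2c_{d}Q}|W\,\mathcal{W}_{1,Q}^{-1}|_{op}d\mu\simeq 1.
\]
Apply Chebyshev at level~$1$ together with a local H\"older (exponents $r,r'$) in each integral $\int_{Q\cap G}|W\mathcal{W}_{1,Q}^{-1}|_{op}d\mu$ to obtain
\[
\mu(G)\leq \int_{G}|\mathcal{T}_{\mathcal{S},W}\vec{f}|d\mu \lesssim \sum_{Q\cap G\neq\emptyset}\langle|\vec{f}|\rangle_{Q}\,\mu(Q\cap G)^{1/r'}\mu(Q)^{1/r}.
\]
The critical observation is that every $Q$ contributing satisfies $\langle|\vec{f}|\rangle_{Q}\leq M\vec{f}(x_{Q})\leq 1$ for any chosen $x_{Q}\in Q\cap G$, so that $\langle|\vec{f}|\rangle_{Q}\mu(Q)^{1/r}\leq (\int_{Q}|\vec{f}|d\mu)^{1/r}$. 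A global H\"older across the sum together with a sparseness/principal-cubes reorganization, as described below, produces the target bound $\mu(G)\lesssim r'\|\vec{f}\|_{L^{1}}\simeq [W]_{A_{\infty,1}^{sc}}\|\vec{f}\|_{L^{1}}$.

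The main obstacle is the final absorption, namely bounding the resulting Carleson-type $Q$-sum by $\mu(G)^{1/r'}\|\vec{f}\|_{L^{1}}^{1/r}$ without paying an uncontrolled $\int_{X}M\vec{f}\,d\mu$ factor, which would be infinite for generic $\vec{f}\in L^{1}$. The resolution is a principal-cubes stopping-time decomposition of the subfamily of $\mathcal{S}$ meeting $G$, built at doubling levels of $\langle|\vec{f}|\rangle$: every stopping cube inherits the bound $\langle|\vec{f}|\rangle\leq 1$ from its intersection with $G$, the geometric doubling telescopes the nested contributions, and sparseness supplies disjoint ``private'' cells of each stopping cube that absorb the total sum into $\|\vec{f}\|_{L^{1}}$, mirroring the scalar Lerner--Ombrosi--P\'erez weak $(1,1)$ scheme. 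The final linear factor $r'\simeq [W]_{A_{\infty,1}^{sc}}$ supplies the constant announced in \eqref{weakendpointT} and \eqref{weakendpointM_W}.
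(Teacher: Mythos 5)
Your reduction to the key lemma follows the paper verbatim: the same reducing matrices $\mathcal{W}_{1,Q}$, the same application of Theorem \ref{thm:ThmSparse} and of the $A_1$ condition to pass to $\mathcal{T}_{\mathcal{S},W}$, the same refinement to a $\frac{5}{4}$-Carleson family, the same localization to $kP$ and removal of $\{M\vec f>1\}$, the same reverse H\"older exponent from Lemma \ref{lem:RevHolderDouble}, and the same first estimate
\[
\mu(G)\lesssim\sum_{Q\in\mathcal{S}}\Big(\tfrac{1}{\mu(Q)}\int_Q|\vec f|\Big)\Big(\tfrac{\mu(Q\cap G)}{\mu(Q)}\Big)^{\frac{1}{r'}}\mu(Q).
\]
The gap is in how you close this sum, which is exactly the content of Lemma \ref{medidaG}. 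The mechanism you describe (principal cubes at doubling levels of $\langle|\vec f|\rangle$, telescoping, disjoint private cells supplied by sparseness) uses only $\langle|\vec f|\rangle_Q\le 1$ and disjointness, not the density factor $(\mu(Q\cap G)/\mu(Q))^{1/r'}$ in a quantitative way, and without that factor the bound is simply false: for $\vec f=\chi_{[0,1]}\vec e_1$ on $\mathbb{R}$ and the sparse family of ancestors $Q_k=[0,2^k]$, $0\le k\le N$ (all meeting any $G\subset[0,1]$, all with $\langle|\vec f|\rangle_{Q_k}\le1$), one has $\sum_k\langle|\vec f|\rangle_{Q_k}\mu(Q_k)=N+1$ while $\|\vec f\|_{L^1}=1$. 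Likewise the ``global H\"older across the sum'' you invoke produces the factors $\big(\sum_Q\mu(Q\cap G)\big)^{1/r'}$ and $\big(\sum_Q\int_Q|\vec f|\big)^{1/r}$, and both are uncontrolled because sparse cubes overlap (they equal $\int_G N\,d\mu$ and $\int N|\vec f|\,d\mu$ with $N=\sum_Q\chi_Q$ unbounded). So the final absorption, which is the heart of the endpoint estimate, is not actually proved in your proposal.

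What the paper does instead in Lemma \ref{medidaG} is a two-parameter balancing: split the family into $\mathcal{S}_{k,j}$ according to $\langle|\vec f|\rangle_Q\simeq2^{-j}$ and $\big(\mu(Q\cap G)/\mu(Q)\big)^{1/s'}\simeq2^{-k}$, and play two competing bounds for $s_{k,j}$ against each other: the $\frac54$-Carleson property gives $\int_Q|\vec f|\le2\int_{\widetilde E_Q}|\vec f|$ with pairwise disjoint $\widetilde E_Q$, hence $s_{k,j}\le 2\cdot2^{-k}\|\vec f\|_{L^1}$; alternatively $\bigcup_{Q\in\mathcal{S}_{k,j}}Q\subset\{M\chi_G>2^{-s'(k+1)}\}$ and the weak $(1,1)$ bound for $M$ give $s_{k,j}\lesssim 2^{-j-k+s'k+s'}\mu(G)$. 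Summing the second bound over $j$ above a threshold $\simeq\gamma+k(s'-1)+s'+k$ yields $\mu(G)/2^{\gamma}$, which is absorbed into the left-hand side (this is where finiteness of $\mu(G)\le\mu(kP)$ is used), while summing the first bound over the $\lesssim\gamma+ks'$ remaining values of $j$ and then in $k$ produces the factor $s'\simeq[W]_{A_{\infty,1}^{sc}}$. In particular the linear $[W]_{A_{\infty,1}^{sc}}$ factor comes from this threshold count, not from a layer-cake estimate of $\int\min(1,M\vec f)^{r}$; to complete your argument you would need to supply this (or an equivalent) interplay between the $G$-density decay, the weak $(1,1)$ bound for $M\chi_G$, and the $\mu(G)$-absorption.
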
	

To complete the proof of Theorem \ref{endpoint}, it remains to justify the claim made at the end of its proof. This is the content of the following lemma. 
	\begin{lem}
		\label{medidaG}
		Given $W\in A_{1,\infty}^{sc}$, $$G=\left(\{|\mathcal{T}_{S,W}(\vec{f})(x)|>1\}\setminus \{M\vec{f}(x)>1\}\right)\cap kP$$ where $P$ is a fixed cube, we have that 
		\[\mu(G)\lesssim 2[W]_{A_{1,\infty}^{sc}}\|\vec{f}\|_{L^1(X)}. \]
		
\begin{proof}
Relying upon the definition of $G$, we have that
			\[
			\mu(G) \leq \sum_{Q \in \mathcal{S}} \frac{1}{\mu(Q)} \int_{Q \cap G} \left| W(x) W^{-1}_{1,Q} \right|_{op}  \, \frac{1}{\mu(Q)} \int_Q |\vec{f}| \, \mu(Q). \]
Choosing \( g = \chi_G \) and \( s = 1 + \frac{1}{\tau_{c_{d}\mu} [W]_{A^{\text{sc}}_{1,\infty}}} \), and taking into account Lemma \ref{lem:RevHolderDouble},			
\[
			\begin{split}
			\frac{1}{\mu(Q)} \int_{Q \cap G} \left| W(x) \mathcal{W}^{-1}_{1,Q} \right|_{op} \,  
			&\lesssim \left( \frac{1}{\mu(Q)} \int_Q \left| W(x) \mathcal{W}^{-1}_{1,Q	} \right|_{op}^s \,  \right)^{\frac{1}{s}} 
			\left( \frac{1}{\mu(Q)} \int_Q g \right)^{\frac{1}{s'}}\\
			&\lesssim \frac{1}{\mu(2c_d1Q)} \int_{2c_d1Q} \left| W(x) \mathcal{W}^{-1}_{1,Q} \right|_{op} \left( \frac{1}{\mu(Q)} \int_Q g \right)^{\frac{1}{s'}}\\
			&=\left( \frac{1}{\mu(Q)} \int_Q g \right)^{\frac{1}{s'}}.
			\end{split}
			\]
			Hence, 
			\[\begin{split}
				\mu(G)&\leq \sum_{Q \in \mathcal{S}} \frac{1}{\mu(Q)} \int_{Q \cap G} \left| W(x) W^{-1}_{1,Q} \right|_{op}  \, \frac{1}{\mu(Q)} \int_Q |\vec{f}| \, \mu(Q)\\
				&\leq \kappa \sum_{Q \in \mathcal{S}} \left( \frac{1}{\mu(Q)} \int_Q g \right)^{\frac{1}{s'}} \frac{1}{\mu(Q)} \int_Q |\vec{f}| \, \mu(Q)
			\end{split}\]
		for some positive constant $\kappa$ depending on the constants of the dyadic structure.
			
		 We split the sparse family as follows \( Q \in \mathcal{S}_{k,j} \), \( k, j \geq 0 \) if
		 \[
		 \begin{split}
		 	2^{-j-1} &< \frac{1}{\mu(Q)} \int_Q |\vec{f}| \, \leq 2^{-j}, \\
		 	2^{-k-1} &< \langle g \rangle_{Q,s'} \leq 2^{-k}.
		 \end{split}
		 \]
		 Then
		 \[
		 \begin{split}
		 	&\sum_{Q \in \mathcal{S}} \left( \frac{1}{\mu(Q)} \int_Q g \right)^{\frac{1}{s'}} \left( \frac{1}{\mu(Q)} \int_Q |\vec{f}| \right) \mu(Q)\\
		 	&= \sum_{j=0}^\infty \sum_{k=0}^\infty \sum_{Q \in \mathcal{S}_{k,j}} \left( \frac{1}{\mu(Q)} \int_Q g \right)^{\frac{1}{s'}} \left( \frac{1}{\mu(Q)} \int_Q |\vec{f}| \right) \mu(Q) \\
		 	&:= \sum_{k=0}^\infty \sum_{j=0}^\infty s_{k,j}.
		 \end{split}
		 \]
		 
		 To estimate  $s_{k,j}$, let us define $\mathcal{S}_{j,k}(Q)=\{P\in \mathcal{S}_{j,k}\subsetneq Q\}$ and \(\widetilde{E_Q}=Q\setminus \bigcup_{Q'\in\mathcal{S}_{j,k}(Q)}Q' \,.\)
		 Since $\mathcal{S}$ is a $\frac{5}{4}$-Carleson family,
$$\int_Q |\vec{f}|\leq 2\int_{\widetilde{E_Q}} |\vec{f}|\, ,$$
		and then 
		\[
		s_{k,j} \leq \sum_{Q \in \mathcal{S}_{j,k}} 2 \int_{\widetilde{E_Q}} |\vec{f}| \left( \frac{1}{\mu(Q)} \int_Q g \right)^{\frac{1}{s'}} 
		\leq 2 \cdot 2^{-k} \sum_{Q \in \mathcal{S}_{j,k}} \int_{\widetilde{E_Q}} |\vec{f}| 
		\leq 2 \cdot 2^{-k} \| \vec{f} \|_{L^1}.
		\]
		On the other hand, 
		 \[
		 \begin{split}
		 	s_{k,j} &\leq 2^{-j} 2^{-k} \sum_{Q \in \mathcal{S}_{j,k}} \mu(Q) 
		 	\leq 2 \cdot 2^{-j} 2^{-k} \sum_{Q \in \mathcal{S}_{j,k}} \mu(E_Q) \\
		 	&\leq 2 \cdot 2^{-j} 2^{-k} \mu\left( \bigcup_{Q \in \mathcal{S}_{j,k}} Q \right)
		 	\leq 2 \cdot 2^{-j} 2^{-k} \mu\left( \left\{ x \in X : M_{s'} g > 2^{-k-1} \right\} \right) \\
		 	&= 2 \cdot 2^{-j} 2^{-k} \mu\left( \left\{ x \in X : M g > 2^{-s'k - s'} \right\} \right) 
		 	\leq  2^{-j-k+1+s'k+s'} \mu(G).
		 \end{split}
		 \]
		 and therefore,
		 \[
		 \begin{split}
		 	s_{k,j} \leq \alpha_{k,j}:= \min \left\{  2^{-k+1} \|\vec{f}\|_{L^1}, \, 2^{-j-k+1+s'k+s'} \mu(G) \right\}.
		 \end{split}
		 \]
		So we have the estimate
		\[
		\begin{split}
			\mu(G)&\leq \kappa \sum_{k=0}^\infty \sum_{j=0}^\infty s_{k,j}
			\leq \kappa\sum_{k=0}^\infty \sum_{j=0}^\infty \alpha_{k,j}\\	
			&=\kappa\left(\sum_{k=0}^\infty\sum_{\substack{j \ge \gamma + \lceil k(s' - 1) + s'\rceil+ k}} \alpha_{k,j}
			+ \sum_{\substack{j < \gamma + \lceil k(s' - 1) + s'\rceil+ k}} \alpha_{k,j} \right)			
		\end{split}
		\]
		 where $\lceil\alpha \rceil$ stands for the smallest integer $k_\alpha$ such that $\alpha\leq k_\alpha$ and  $\gamma$ is some for some positive integer to be chosen. For the first term, 
		\[
\begin{split}\sum_{k=0}^{\infty}\sum_{j\geq\gamma+\lceil k(s'-1)+s'\rceil+k}\alpha_{k,j} & \leq\mu(G)\sum_{k=0}^{\infty}2^{k(s'-1)+1+s'}\sum_{j\ge\gamma+\lceil k(s'-1)+s'\rceil+k}2^{-j}\\
 & \leq2\mu(G)\sum_{k=0}^{\infty}2^{k(s'-1)+s'}\cdot2^{-(\gamma+\lceil k(s'-1)+s'\rceil+k)}\\
 & \leq2\frac{\mu(G)}{2^{\gamma}}\sum_{k=0}^{\infty}2^{-k}
 =4\frac{\mu(G)}{2^{\gamma}}.
\end{split}
		\]
		For the second term, since $ s = 1 + \frac{1}{\tau_{c_{d}\mu} [W]_{A^{\text{sc}}_{1,\infty}}}$
		\[
\begin{split}\sum_{k=0}^{\infty}\sum_{0\leq j<\gamma+\lceil k(s'-1)+s'\rceil+k}\alpha_{k,j} & \leq2\|\vec{f}\|_{L^{1}}\sum_{k=0}^{\infty}\sum_{0\leq j<\gamma+\lceil k(s'-1)+s'\rceil+k}2^{-k}\\
 & \leq2\|\vec{f}\|_{L^{1}}\sum_{k=0}^{\infty}(\gamma+\lceil k(s'-1)+s'\rceil+k)2^{-k}\\
 & \leq c_\gamma\|\vec{f}\|_{L^{1}}[W]_{A_{1,\infty}^{\text{sc}}}
\end{split}
\]
for some $c_{\gamma}>0$. 

Combining the preceding estimates, we have that
\[
\mu(G)\leq\kappa c_{\gamma}\|\vec{f}\|_{L^{1}}[W]_{A_{1,\infty}^{\text{sc}}}+4\kappa\frac{\mu(G)}{2^{\gamma}}\,
\]
and therefore, choosing $\gamma$ such that $\frac{4\kappa}{2^{\gamma}}<\frac{1}{2}$
\[
\mu(G)\lesssim\|\vec{f}\|_{L^{1}}[W]_{A_{1,\infty}^{\text{sc}}}
\]
as we wanted to show.
		\end{proof}
	\end{lem}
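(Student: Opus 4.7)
The plan is to start from Chebyshev's inequality: since $\mathcal{T}_{S,W}(\vec{f})>1$ on $G$, integrating gives
\[
\mu(G)\leq \int_G \mathcal{T}_{S,W}(\vec{f})\,d\mu = \sum_{Q\in\mathcal{S}}\Big(\frac{1}{\mu(Q)}\int_{Q\cap G}|W(x)\mathcal{W}_{1,Q}^{-1}|_{op}\,d\mu(x)\Big)\langle|\vec{f}|\rangle_Q\,\mu(Q).
\]
First I would control the inner factor by choosing $s=1+\frac{1}{\tau_{c_d\mu}[W]_{A_{1,\infty}^{sc}}}$ so that Lemma \ref{lem:RevHolderDouble} applies. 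H\"older with $g=\chi_G$ followed by the sharp weak reverse H\"older inequality (and noting that $\mathcal{W}_{1,Q}$ is by definition the reducing matrix already built on the enlarged ball $2c_d(1Q)$, so $|\mathcal{W}_{1,2c_d(1Q)}\mathcal{W}_{1,Q}^{-1}|_{op}\simeq 1$) yields
\[
\frac{1}{\mu(Q)}\int_{Q\cap G}|W(x)\mathcal{W}_{1,Q}^{-1}|_{op}\,d\mu(x)\lesssim \langle \chi_G\rangle_{Q,s'},
\]
and consequently $\mu(G)\lesssim \sum_{Q\in\mathcal{S}}\langle\chi_G\rangle_{Q,s'}\langle|\vec{f}|\rangle_Q\mu(Q)$.

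Next I would organize this sum by Calder\'on--Zygmund level sets: define $\mathcal{S}_{k,j}\subset\mathcal{S}$ to be the cubes with $2^{-k-1}<\langle \chi_G\rangle_{Q,s'}\leq 2^{-k}$ and $2^{-j-1}<\langle|\vec{f}|\rangle_Q\leq 2^{-j}$. Since $G\subset\{M\vec{f}\leq 1\}$, cubes with $\langle|\vec{f}|\rangle_Q>1$ satisfy $Q\cap G=\emptyset$ and so do not contribute; thus only $j,k\geq 0$ remain. For the partial sum $s_{k,j}$ I will produce two competing bounds. Exploiting the $\tfrac{4}{5}$-sparsity of $\mathcal{S}$ (equivalently $\tfrac{5}{4}$-Carleson), the proper $\mathcal{S}$-descendants of $Q$ contribute at most $\tfrac{1}{5}\mu(Q)\cdot 2^{-j}$ to $\int_Q|\vec{f}|$, which yields $\int_Q|\vec{f}|\leq 2\int_{\widetilde{E_Q}}|\vec{f}|$ with the $\widetilde{E_Q}$ pairwise disjoint, and hence $s_{k,j}\lesssim 2^{-k}\|\vec{f}\|_{L^1}$. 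Alternatively, $\bigcup_{Q\in\mathcal{S}_{k,j}}Q\subset\{M_{s'}\chi_G>2^{-k-1}\}=\{M\chi_G>2^{-s'(k+1)}\}$ together with the weak $(1,1)$ inequality for $M$ applied to $\chi_G$ yields $s_{k,j}\lesssim 2^{(s'-1)k-j+s'}\mu(G)$. Thus $s_{k,j}\leq\alpha_{k,j}$, the minimum of the two.

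Finally I would split $\sum_{k,j}\alpha_{k,j}$ at the threshold $j\sim\gamma+ks'$ for a free integer $\gamma$ to be chosen. Above the threshold the second bound telescopes geometrically and sums to $\lesssim 2^{-\gamma}\mu(G)$, which can be absorbed into the left hand side by choosing $\gamma$ large enough. Below the threshold the first bound contributes $\sum_k(\gamma+ks')2^{-k}\|\vec{f}\|_{L^1}\lesssim(\gamma+s')\|\vec{f}\|_{L^1}$, and since $s'=1+\tau_{c_d\mu}[W]_{A_{1,\infty}^{sc}}$ this is linear in $[W]_{A_{1,\infty}^{sc}}$, giving the desired estimate. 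The most delicate step will be this balancing act: the dyadic bookkeeping must align so that the final constant depends only \emph{linearly} on $[W]_{A_{1,\infty}^{sc}}$, which forces the use of precisely the scalar reverse H\"older threshold $s=1+1/(\tau_{c_d\mu}[W]_{A_{1,\infty}^{sc}})$, and crucially it relies on the cubes with $\langle|\vec{f}|\rangle_Q>1$ dropping out thanks to the removal of $\{M\vec{f}>1\}$ from $G$.
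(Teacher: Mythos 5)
Your proposal is correct and follows essentially the same route as the paper's proof: Chebyshev plus the sparse representation, H\"older with $g=\chi_G$ and the reverse H\"older bound of Lemma \ref{lem:RevHolderDouble} at the exponent $s=1+\frac{1}{\tau_{c_d\mu}[W]_{A^{sc}_{1,\infty}}}$, the level-set families $\mathcal{S}_{k,j}$, the two competing bounds for $s_{k,j}$ (Carleson/sparseness giving $\lesssim 2^{-k}\|\vec f\|_{L^1}$, and the weak $(1,1)$ bound for $M$ applied to $\chi_G$ giving $\lesssim 2^{(s'-1)k-j+s'}\mu(G)$), and the split at $j\sim\gamma+ks'$ with absorption of the $2^{-\gamma}\mu(G)$ term. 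The only deviations are cosmetic (e.g.\ the proper descendants are controlled by $\tfrac14\mu(Q)$ rather than $\tfrac15\mu(Q)$ under the $\tfrac54$-Carleson condition), and you even make explicit the point, implicit in the paper, that cubes with $\langle|\vec f|\rangle_Q>1$ do not contribute because $G$ avoids $\{M\vec f>1\}$.
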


\section{Proof of the $T(1)$-like sparse result}\label{sec:SparseT1proof}
\subsection{Lemmatta} 
To settle Theorem \ref{thm:sparseT1} we will need some the following Lemmatta.
\begin{lem}
\label{lem:Let:Aux}Let $(X,d,\mu)$ be a space of homogeneous type.
Assume that $K^{*}\in H_{1}$. Then: 
\begin{enumerate}
\item For every ball $B$ and $k\geq2c_{d}$ 
\begin{equation}
\int_{X\setminus kB}|Tf(x)|d\mu\leq c\int_{B}|f|d\mu\label{eq:Part1}
\end{equation}
provided $f$ is supported in $B$ and $\int_{B}f=0$ 
\item If additionally there exist $A,\gamma>0$ such that for every dyadic
system given by Proposition \ref{proposition:dyadicsystem}, any $Q\in\mathcal{D}$
and any $f\in L^{\infty}(Q)$, 
\begin{equation}
\mu\left(\left\{ x\in Q\,:\,|T(f\chi_{Q})(x)|>\alpha\right\} \right)\leq A\left(\frac{\|f\|_{L^{\infty}(Q)}}{\alpha}\right)^{\gamma}\mu(Q),\qquad(\alpha>0)\label{eq:HipLemma}
\end{equation}
then there exists $c>1$ such that for any $f\in L_{c}^{\infty}$
and for every ball $B$ 
\[
\mu\left(\left\{ x\in B\,:\,|T(f\chi_{B})(x)|>\alpha\langle|f|\rangle_{B}\right\} \right)\leq Ac\frac{1}{\alpha^{\frac{\gamma}{1+\gamma}}}\mu(B)\qquad(\alpha>0).
\]
\end{enumerate}
\end{lem}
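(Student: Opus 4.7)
My plan is to use the cancellation $\int_B f\,d\mu = 0$ in the standard way. For $x\in X\setminus kB$ and with $x_B$ the center of $B$, I would write
\[
Tf(x) = \int_B [K(x,y) - K(x,x_B)]f(y)\,d\mu(y),
\]
so that Fubini gives
\[
\int_{X\setminus kB}|Tf|\,d\mu \le \int_B |f(y)|\int_{X\setminus kB}|K(x,y)-K(x,x_B)|\,d\mu(x)\,d\mu(y).
\]
Setting $\widetilde B := 2c_d B$, the condition $k\ge 2c_d$ yields $X\setminus kB\subset X\setminus\widetilde B = \bigcup_{j\ge 1}(2^j\widetilde B\setminus 2^{j-1}\widetilde B)$, while $y,x_B\in B = \tfrac{1}{2c_d}\widetilde B$. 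Using $K^*(x,y)=K(y,x)$, the hypothesis $K^*\in H_1$ unwinds to the $L^1$-H\"ormander condition on $K$ in its second variable (cf.\ \eqref{eq:Hr2} with $r=1$), which bounds the inner integral by a constant $c$ uniformly in $y\in B$. This yields \eqref{eq:Part1}.

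\textbf{Plan for Part (2), setup.} I may assume $\alpha\ge 1$ (otherwise the conclusion is trivial after adjusting constants). Given $B$ and $f\in L_c^\infty$, Proposition \ref{proposition:dyadicsystem} produces an adjacent dyadic system $\mathcal{D}_j$ and a cube $Q\in\mathcal{D}_j$ with $B\subset Q$ and $\mu(Q)\simeq\mu(B)$ (by doubling). Set $\lambda := \alpha^{\gamma/(1+\gamma)}\langle|f|\rangle_B$. I then perform a dyadic Calder\'on--Zygmund decomposition of $f\chi_B$ relative to $\mathcal{D}_j$ inside $Q$ at height $\lambda$, producing maximal dyadic subcubes $\{Q_i\}\subset\mathcal{D}_j$ with $\langle|f\chi_B|\rangle_{Q_i}>\lambda$ together with a splitting $f\chi_B = g + h$, where $\|g\|_{L^\infty}\lesssim\lambda$, each $h_i := (f\chi_B-\langle f\chi_B\rangle_{Q_i})\chi_{Q_i}$ has zero mean, and $\sum_i\mu(Q_i)\le\|f\chi_B\|_{L^1}/\lambda$.

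\textbf{Plan for Part (2), estimates.} Splitting the level set at threshold $\alpha\langle|f|\rangle_B/2$ for each of $Tg$ and $Th$, the good part is controlled by hypothesis \eqref{eq:HipLemma} on $Q$:
\[
\mu\bigl(\{x\in Q:|T(g\chi_Q)(x)|>\tfrac{\alpha}{2}\langle|f|\rangle_B\}\bigr)\lesssim A\Bigl(\tfrac{\lambda}{\alpha\langle|f|\rangle_B}\Bigr)^\gamma\mu(B).
\]
For the bad part, each $h_i$ is supported in the ball $B_i := 1 Q_i\supset Q_i$ with zero mean, so Part (1) applied to $h_i$ and $B_i$ (with $k=2c_d$) gives $\int_{X\setminus kB_i}|Th_i|\lesssim\int|h_i|$. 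Discarding $\bigcup_i kB_i$ and applying Chebyshev on the complement yields
\[
\mu\bigl(\{x\in B:|Th(x)|>\tfrac{\alpha}{2}\langle|f|\rangle_B\}\bigr)\lesssim\sum_i\mu(kB_i)+\frac{2}{\alpha\langle|f|\rangle_B}\|h\|_{L^1}\lesssim\mu(B)\Bigl(\tfrac{\langle|f|\rangle_B}{\lambda}+\tfrac{1}{\alpha}\Bigr),
\]
using $\mu(kB_i)\lesssim\mu(Q_i)$ (doubling) and $\|h\|_{L^1}\lesssim\|f\chi_B\|_{L^1}$. With $\lambda = \alpha^{\gamma/(1+\gamma)}\langle|f|\rangle_B$ all three fractions become comparable to $\alpha^{-\gamma/(1+\gamma)}$, giving the announced bound.

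\textbf{Main obstacle.} Part (1) is routine once one identifies $K^*\in H_1$ with the second-variable $L^1$-H\"ormander condition on $K$. The delicate point lies in Part (2): the CZ decomposition is carried out on the \emph{cube} $Q$, while Part (1) is subsequently applied to the \emph{balls} $B_i = 1 Q_i$; correctly transitioning from the cube-based hypothesis \eqref{eq:HipLemma} to the ball-based conclusion, and folding all doubling and dyadic comparability constants into the single final constant $c$, is where most of the bookkeeping effort lies.
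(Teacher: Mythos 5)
Your plan is correct and follows essentially the same route as the paper: the cancellation-plus-H\"ormander argument for Part (1), and for Part (2) the embedding of $B$ into a dyadic cube via Proposition \ref{proposition:dyadicsystem}, a local Calder\'on--Zygmund decomposition at height $\alpha^{\gamma/(1+\gamma)}\langle|f|\rangle_B$, the hypothesis \eqref{eq:HipLemma} for the good part, and Part (1) plus Chebyshev off the dilated stopping cubes for the bad part. The only (harmless) deviations are your explicit reduction to $\alpha\ge 1$ and bounding the Chebyshev term by $\|h\|_{L^1}\lesssim\|f\chi_B\|_{L^1}$ rather than via the stopping-time averages, both of which yield the same final bound.
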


\begin{proof}
We begin settling the first item. Assume that for a ball $B$ we have
that $\int_{B}f=0$ and that $f$ is supported on $B$. Note that
then 
\begin{align*}
\int_{X\setminus kB}|Tf(x)|d\mu(x) & =\int_{X\setminus kB}\left|\int_{B}f(y)K(x,y)dy\right|d\mu\\
 & =\int_{X\setminus kB}\left|\int_{B}f(y)(K(x,y)-K(x,c_{B}))d\mu(y)\right|d\mu(x)\\
 & \leq\int_{B}\left|f(y)\right|\sum_{j=1}^{\infty}\mu(2^{j}kB)\langle(K^{*}(y,\cdot)-K^{*}(c_{B},\cdot))\chi_{2^{j}kB\setminus2^{j-1}kB}\rangle_{1,2^{j}kB}d\mu(y)\\
 & \leq c\int_{B}|f(y)|d\mu(y).
\end{align*}

Now we deal with the second item. By Proposition \ref{proposition:dyadicsystem}
there exists $\delta\geq1$, a dyadic system $\mathcal{D}_{i}$ and
a cube $Q_{i}\in\mathcal{D}_{i}$ such that $B\subseteq Q_{i}$ and
$diam(Q_{i})\leq\delta r_{B}$.

Let $\tilde{f}=f\chi_{B}$. By the local Calderón-Zygmund decomposition
we shall take the maximal cubes $\lbrace P_{j}\rbrace\subset\mathcal{D}(Q_{i})$
such that 
\[
\left\{ M^{\mathcal{D}}(\tilde{f})>\alpha^{\frac{\gamma}{1+\gamma}}\langle|f|\rangle_{B}\right\} =\bigcup_{j}P_{j}.
\]
Note that then we have that
\[
\alpha^{\frac{\gamma}{1+\gamma}}\langle|f|\rangle_{B}<\langle|\tilde{f}|\rangle_{P_{j}}\leq c\alpha^{\frac{\gamma}{1+\gamma}}\langle|f|\rangle_{B}.
\]
Now we let $b=\sum_{j}b_{j}$ with $b_{j}=(\tilde{f}-\langle\tilde{f}\rangle_{P_{j}})\chi_{P_{j}}$,
and $g=\tilde{f}-b$. Note that by Lebesgue differentiation theorem
\[
|g(x)|=|\tilde{f}(x)|=\lim_{Q\rightarrow\{x\}}\dashint_{Q}|\tilde{f}(y)|d\mu(y)\leq M^{\mathcal{D}}\tilde{f}(x)\leq\alpha^{\frac{\gamma}{1+\gamma}}\langle|f|\rangle_{B}\,
\]
a.e. $x\in Q\setminus\bigcup_{j}P_{j}$. Also, if $x\in P_{j}$ we
have that 
\[
|g(x)|=\langle|\tilde{f}|\rangle_{P_{j}}\leq c\alpha^{\frac{\gamma}{1+\gamma}}\langle|f|\rangle_{B}
\]
and then for every $P_{j},$
\[
\|g\|_{L^{\infty}(Q_{i})}\leq\|g\|_{L^{\infty}}\lesssim\alpha^{\frac{\gamma}{1+\gamma}}\langle|f|\rangle_{B}.
\]
Relying upon the functions defined above, for some $k\geq2c_{d}$,
we have that
\begin{align*}
 & \mu\left(\left\{ x\in B:T(f\chi_{B})>\alpha\langle|f|\rangle_{B}\right\} \right)\\
= & \mu\left(\left\{ x\in B:T(\tilde{f}\chi_{Q_{i}})>\alpha\langle|f|\rangle_{B}\right\} \right)\\
\leq & \mu\left\{ x\in Q_{i}\,:\,|T(g\chi_{Q_{i}})(x)|>\frac{1}{2}\alpha\langle|f|\rangle_{B}\right\} \\
+ & \mu\left\{ x\in Q_{i}\setminus\bigcup_{j}kP_{j}\,:\,|T(b\chi_{B})(x)|>\frac{1}{2}\alpha\langle|f|\rangle_{B}\right\} +\mu\left(\bigcup kP_{j}\right)\\
= & I+II+III\,.
\end{align*}
For $I$, by (\ref{eq:HipLemma}) we have that
\begin{equation}
\begin{split} & \mu\left\{ x\in Q_{i}\,:\,|T(g\chi_{Q_{i}})(x)|>\frac{1}{2}\alpha\langle|f|\rangle_{B}\right\} \\
= & \mu\left\{ x\in Q_{i}\,:\,\left|T\left(\dfrac{g\chi_{Q_{i}}}{\langle|f|\rangle_{B}}\right)(x)\right|>\frac{\alpha}{2}\right\} \leq A\left(\frac{2\|g\|_{L^{\infty}(Q_{i})}}{\alpha\langle|f|\rangle_{B}}\right)^{\gamma}\mu(Q_{i})\\
\lesssim & A\left(\frac{2\alpha^{\frac{\gamma}{1+\gamma}}\langle|f|\rangle_{B}}{\alpha\langle|f|\rangle_{B}}\right)^{\gamma}\mu(B)=A\left(\frac{2}{\alpha^{\frac{1}{1+\gamma}}}\right)^{\gamma}\mu(B)=A\frac{2^{\gamma}}{\alpha^{\frac{\gamma}{1+\gamma}}}\mu(B)\,.
\end{split}
\label{6.2}
\end{equation}
For $II$,
\begin{equation}
\begin{split} & \mu\left\{ x\in Q_{i}\setminus\bigcup_{j}kP_{j}\,:\,|T(b\chi_{Q_{i}})(x)|>\frac{1}{2}\alpha\langle|f|\rangle_{B}\right\} \\
\leq & \dfrac{2}{\alpha\langle|f|\rangle_{B}}\int_{Q_{i}\setminus\bigcup_{j}kP_{j}}|T(b\chi_{Q_{i}})(x)|d\mu(x)=\dfrac{2}{\alpha\langle|f|\rangle_{B}}\int_{Q_{i}\setminus\bigcup P_{j}}|T(\sum_{j}b_{j})(x)|d\mu(x)\\
\leq & \dfrac{2}{\alpha\langle|f|\rangle_{B}}\sum_{j}\int_{Q_{i}\setminus\bigcup_{j}kP_{j}}|T(b_{j})(x)|d\mu(x)\\
= & \dfrac{2}{\alpha\langle|f|\rangle_{B}}\sum_{j}\int_{Q_{i}\setminus\bigcup_{j}kP_{j}}|T((\tilde{f}-\langle\tilde{f}\rangle_{P_{j}})\chi_{P_{j}})(x)|d\mu(x)\\
\leq & \dfrac{2}{\alpha\langle|f|\rangle_{B}}\sum_{j}\int_{Q_{i}\setminus kP_{j}}|T((\tilde{f}-\langle\tilde{f}\rangle_{P_{j}})\chi_{P_{j}})(x)|d\mu(x)=\circledast
\end{split}
\label{6.3}
\end{equation}
Observe that 
\[
\int_{1P_{j}}(\tilde{f}-\langle\tilde{f}\rangle_{P_{j}})\chi_{P_{j}}=\int_{P_{j}}(\tilde{f}-\langle\tilde{f}\rangle_{P_{j}})\chi_{P_{j}}=0.
\]
Then, by \eqref{eq:Part1}, we have that
\begin{equation}
\begin{split}\circledast & \leq\dfrac{2}{\alpha\langle|f|\rangle_{B}}\sum_{j}c\int_{P_{j}}|\tilde{f}-\langle\tilde{f}\rangle_{P_{j}}|d\mu\leq\dfrac{2}{\alpha\langle|f|\rangle_{B}}\sum_{j}c\left(\int_{P_{j}}|\tilde{f}|+\int_{P_{j}}|\langle\tilde{f}\rangle_{P_{j}}|\right)\\
 & =\dfrac{2}{\alpha\langle|f|\rangle_{B}}\sum_{j}c\left(\mu(P_{j})\langle|\tilde{f}|\rangle_{P_{j}}+\mu(P_{j})|\langle\tilde{f}\rangle_{P_{j}}|\right)\leq\dfrac{2}{\alpha\langle|f|\rangle_{B}}\sum_{j}c\left(2\mu(P_{j})\langle|\tilde{f}|\rangle_{P_{j}}\right)\\
 & \leq\dfrac{4c}{\alpha\langle|f|\rangle_{B}}\sum_{j}\mu(P_{j})\alpha^{\frac{\gamma}{1+\gamma}}\langle|f|\rangle_{B}=\dfrac{4c}{\alpha^{\frac{1}{1+\gamma}}}\sum_{j}\mu(P_{j})\leq\dfrac{4c}{\alpha^{\frac{1}{1+\gamma}}}\mu(Q_{i})\leq\dfrac{4c}{\alpha^{\frac{1}{1+\gamma}}}c_{\mu}^{\log_{2}\delta}\mu(B)
\end{split}
\label{6.4}
\end{equation}
For $III$ we have
\begin{equation}
\begin{split}\mu\left(\bigcup_{j}kP_{j}\right) & \lesssim\sum_{j}\mu(P_{j})\leq\sum_{j}\frac{c}{\alpha^{\frac{\gamma}{1+\gamma}}\langle|f|\rangle_{B}}\int_{P_{j}}|\tilde{f}|\\
 & =\sum_{j}\frac{c}{\alpha^{\frac{\gamma}{1+\gamma}}\langle|f|\rangle_{B}}\int_{P_{j}}|f\chi_{B}|\leq\frac{c}{\alpha^{\frac{\gamma}{1+\gamma}}\langle|f|\rangle_{B}}\int_{B}|f|=\frac{c}{\alpha^{\frac{\gamma}{1+\gamma}}}\mu(B)\,.
\end{split}
\label{6.5}
\end{equation}
Gathering  \eqref{6.2}, \eqref{6.3}, \eqref{6.4} and \eqref{6.5}
we have: 
\[
\begin{split}\mu\left\{ x\in B\,:\,|T(f\chi_{B})(x)|>\alpha\right\}  & \leq\left(A\frac{2^{\gamma}}{\alpha^{\frac{\gamma}{1+\gamma}}}\mu(B)+\dfrac{4c}{\alpha^{\frac{1}{1+\gamma}}}c_{\mu}^{\log_{2}\delta}\mu(B)+\frac{c}{\alpha^{\frac{\gamma}{1+\gamma}}}\mu(B)\right)\\
 & =\frac{C'}{\alpha^{\frac{\gamma}{1+\gamma}}}\mu(B)\,.
\end{split}
\]
\end{proof}
\begin{lem}
\label{lem:Mtr'r'}Let $r>1$ and $K\in H_{r}$ and $\alpha\geq\dfrac{3c_{d}^{2}}{\delta}\geq3$
. Then $\mathcal{M}_{T,\alpha}^{\#}$ is weak -type. 
\end{lem}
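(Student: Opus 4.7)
The plan is to prove the pointwise domination
\[
\mathcal{M}_{T,\alpha}^{\#}f(x) \lesssim H_{r,1}\, M_{r'}f(x),
\]
(with $r'=1$ in the endpoint case $r=\infty$), from which the weak-type $(r',r')$ boundedness of $\mathcal{M}_{T,\alpha}^{\#}$ is immediate, since the maximal operator $M_{r'}f = M(|f|^{r'})^{1/r'}$ is classically of weak type $(r',r')$ in any space of homogeneous type.

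To establish the pointwise bound, fix $x\in X$, a ball $B\ni x$, and representatives $y,z\in B$. Using the kernel representation of $T$,
\[
|T(f\chi_{X\setminus \alpha B})(y) - T(f\chi_{X\setminus \alpha B})(z)| \leq \int_{X\setminus \alpha B} |K(y,w)-K(z,w)|\,|f(w)|\, d\mu(w).
\]
The natural step is to apply the $H_{r,1}$ condition (or $H_{\infty,1}$ when $r=\infty$) to the ball $\tilde B:=\alpha B$. The standing hypothesis $\alpha\geq 3c_d^{2}/\delta$ forces $\alpha\geq 2c_d$, and therefore $B\subseteq \frac{1}{2c_d}\tilde B$, which places $y,z$ in the region over which $H_{r,1}$ takes its supremum when applied to $\tilde B$. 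Moreover, $X\setminus \alpha B = X\setminus \tilde B \subseteq \bigcup_{k\geq 1}\bigl(2^k\tilde B\setminus 2^{k-1}\tilde B\bigr)$.

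Applying H\"older's inequality with exponents $r$ and $r'$ on each annulus (replaced by the $L^{\infty}$--$L^{1}$ duality when $r=\infty$), using $x\in B\subseteq 2^k\tilde B$ to estimate $\langle |f|\rangle_{r',2^k\tilde B}\leq M_{r'}f(x)$, and then summing in $k$ using the $H_{r,1}$ bound, yields
\[
|T(f\chi_{X\setminus \alpha B})(y) - T(f\chi_{X\setminus \alpha B})(z)| \leq H_{r,1}\, M_{r'}f(x).
\]
The right-hand side is independent of $y,z,B$, so taking $\esssup$ over $y,z\in B$ and $\sup$ over $B\ni x$ concludes the pointwise domination, and the weak-type bound follows from the weak-type estimate for $M_{r'}$.

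I do not anticipate any substantive obstacle in this argument. The only geometric point that has to be checked is the inclusion $y,z\in\frac{1}{2c_d}\tilde B$ so that the H\"ormander sum is actually available with the ball $\tilde B=\alpha B$; this is exactly the role of the quantitative hypothesis $\alpha\geq 3c_d^{2}/\delta$.
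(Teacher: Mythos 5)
Your proposal is correct and follows essentially the same route as the paper: a pointwise bound $\mathcal{M}_{T,\alpha}^{\#}f(x)\lesssim M_{r'}f(x)$ obtained by decomposing the complement into dyadic annuli, applying H\"older with exponents $r,r'$ on each annulus, and summing via the $H_{r}$ (resp. $H_{\infty}$) condition, after which the weak type of $M_{r'}$ finishes the argument. The only cosmetic difference is that you run the annular decomposition and the H\"ormander sum on $\alpha B$ (using $\alpha\geq 3c_d^2/\delta\geq 2c_d$ to place $y,z\in\frac{1}{2c_d}\alpha B$), while the paper first enlarges the region to $X\setminus 3B$ and uses annuli of $3B$; your choice of ball is, if anything, the cleaner way to make the geometric hypothesis of \eqref{eq:Hr1} available.
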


\begin{proof}
Let us fix $x\in X$ and let $B$ a ball such that $x\in B$. Let
$y,z\in B$, then we also have that $y,z\in\frac{1}{2c_{d}}\alpha B$ and
\begin{align*}
 & \left|T(f\chi_{X\setminus\alpha B})(y)-T(f\chi_{X\setminus\alpha B})(z)\right|\\
= & \left|\int_{X\setminus\alpha B}f(t)(K(y,t)-K(z,t))d\mu(t)\right|\leq\int_{X\setminus\alpha B}|f(t)|\left|K(y,t)-K(z,t)\right|d\mu(t)\\
\leq & \int_{X\setminus3B}|f(t)|\left|K(y,t)-K(z,t)\right|d\mu(t)\leq\sum_{k=1}^{\infty}\int_{2^{k}3B\setminus2^{k-1}3B}|f(t)|\left|K(y,t)-K(z,t)\right|d\mu(t)\\
\leq & \sum_{k=1}^{\infty}\left(\int_{2^{k}3B\setminus2^{k-1}3B}|f(t)|^{r'}d\mu(t)\right)^{\frac{1}{r'}}\left(\int_{2^{k}3B\setminus2^{k-1}3B}|K(y,\cdot)-K(z,\cdot)|^{r}\right)^{\frac{1}{r}}\\
= & \sum_{k=1}^{\infty}\left(\frac{1}{\mu(2^{k}3B)}\int_{2^{k}3B\setminus2^{k-1}3B}|f(t)|^{r'}d\mu(t)\right)^{\frac{1}{r'}}\mu(2^{k}3B)^{\frac{1}{r'}}\left(\int_{2^{k}3B\setminus2^{k-1}3B}|K(y,\cdot)-K(z,\cdot)|^{r}\right)^{\frac{1}{r}}\\
\leq & \sum_{k=1}^{\infty}\left(\frac{1}{\mu(2^{k}3B)}\int_{2^{k}3B}|f(t)|^{r'}d\mu(t)\right)^{\frac{1}{r'}}\mu(2^{k}3B)^{\frac{1}{r'}}\left(\int_{2^{k}3B\setminus2^{k-1}3B}|K(y,\cdot)-K(z,\cdot)|^{r}\right)^{\frac{1}{r}}\\
\leq & M_{r'}f(x)\sum_{k=1}^{\infty}\mu(2^{k}3B)\left(\frac{1}{\mu(2^{k}3B)}\int_{2^{k}3B\setminus2^{k-1}3B}|K(y,\cdot)-K(z,\cdot)|^{r}\right)^{\frac{1}{r}}\\
\lesssim & M_{r'}f(x).
\end{align*}
Consequently
\[
\mathcal{M}_{T,\alpha}^{\#}f(x)\lesssim M_{r'}(f)(x)
\]
and from this the desired conclusion readily follows.
\end{proof}
\subsection{Proof of Theorem \ref{thm:sparseT1}}

Let $\mathcal{D}$ a dyadic system and $Q\in\mathcal{D} $. Let $s=sgn(Tf\chi_Q)$. Observe that by  \eqref{eq:HipT1} there exists $c\textgreater 0$ such that:
\[
\begin{split}
	\int_Q|T(f\chi_Q)|&\leq \int_Q|fT^*(\chi_{\{s=1\}})|+\int_Q|fT^*(\chi_{\{s=-1\}})|\\
	&\leq \|f\|_{L^\infty(Q)}\int_{1Q}|T^*(\chi_{\{s=1\}})|+\|f\|_{L^\infty(Q)}\int_{1Q}|T^*(\chi_{\{s=-1\}})|\\
	&\leq 2c\|f\|_{L^\infty(Q)}\mu(1Q)\leq 2cc' \|f\|_{L^\infty(Q)}\mu(Q)
\end{split}
\]
and therefore
\[\begin{split}
	\mu\left(\left\{ x\in Q\,:\,|T(f\chi_{Q})(x)|>\alpha\right\} \right)&\leq \dfrac{1}{\alpha}\int_{Q}|T(f\chi_{Q})(x)|\\
	&\leq \dfrac{2cc'\|f\|_{L^\infty(Q)}\mu(Q)}{\alpha}.
\end{split}	
\]
Now we apply Lemma \ref{lem:Let:Aux}  with $A=2cc'$ and $\gamma=1$. There exists $c'>1$ such that
\[
\mu\left(\left\{ x\in B\,:\,|T(f\chi_{B})(x)|>\alpha\langle |f|\rangle_{B}\right\} \right)\leq \frac{2cc'}{\alpha^{\frac{1}{2}}}\mu(B) \quad \alpha > 0
\]
By Lemma \ref{lem:Mtr'r'} we have that 
\[
\mu\left(\left\{ x\in B\,:\,|\mathcal{M}_{T,\alpha}^{\#}(f\chi_{B})(x)|>\alpha\langle|f|\rangle_{r',B}\right\} \right)\leq \left(\dfrac{C}{\alpha\langle |f|\rangle_{r',B}}\right)^{r'}\|f\|_{L^{r'}(B)}^{r'}=\dfrac{C'}{\alpha}\mu(B).
\]
Define $\psi(\lambda)=\left(\dfrac{2cc'}{\lambda}\right)^2$ and $\phi(\lambda)=\dfrac{C'}{\lambda}$. We have that $\psi\left(\dfrac{2cc'}{\alpha^{1/2}}\right)=\phi\left(\dfrac{C'}{\alpha}\right)=\alpha$.  A direct application of Theorem \ref{thm:ThmSparse}
ends the proof.

\section{Proof of the sparse domination result}\label{sec:SparseProof}

%

\subsection{Lemmatta}

In this section we gather somme results that will be useful to settle our
convex body domination result.
\begin{lem}
\label{lem:BelongTest}Let $\{\vec{e_{i}}\}_{i=1}^{n}$ be the principal axes
of the John ellipsoid of $\langle\langle \vec{f}\rangle\rangle_{r,Q}$, where $0<\mu(Q)<\infty$.
Assume that for every $i=1,\dots,n$
\end{lem}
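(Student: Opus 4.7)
The idea is to reduce, via John's theorem, membership in the symmetric convex body $\langle\langle \vec{f}\rangle\rangle_{r,Q}$ to a coordinate-wise check along the principal axes $\vec{e_1},\dots,\vec{e_n}$. Since $\langle\langle \vec{f}\rangle\rangle_{r,Q}$ is compact, convex and centrally symmetric (as noted right after its definition), John's theorem furnishes an ellipsoid $E$ with the same axes such that
\[
E\;\subseteq\;\langle\langle \vec{f}\rangle\rangle_{r,Q}\;\subseteq\;\sqrt{n}\,E,
\]
and, once the $\vec{e_i}$ are taken as the semiaxes, $E=\{\sum_i a_i \vec{e_i}\,:\,\sum_i a_i^2\le 1\}$. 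In particular the $\vec{e_i}$ themselves lie in $\langle\langle \vec{f}\rangle\rangle_{r,Q}$, so unwinding the definition of $\langle\langle\cdot\rangle\rangle_{r,Q}$ produces, for each $i$, a scalar function $\phi_i$ with $\|\phi_i\|_{L^{r'}(d\mu/\mu(Q))}\le 1$ and $\frac{1}{\mu(Q)}\int_Q \phi_i\, \vec{f}\,d\mu=\vec{e_i}$.

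First I would invoke the hypothesis on each axis $\vec{e_i}$ (expected to encode, per axis, some quantitative control against $\vec{e_i}$ such as the bound on a coefficient $a_i$ of a candidate vector in the basis $\{\vec{e_i}\}$). Combining these $n$ axis-wise controls via Cauchy--Schwarz yields a uniform bound of the form $\sum_i a_i^2\lesssim c_n$, i.e.\ the candidate vector lies in a dilate of $E$.

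Second, I would synthesize the axis-wise test densities $\phi_i$ into a single one that witnesses the membership in (a dilate of) the convex body. Given $\vec{v}=\sum_i a_i\vec{e_i}$, set $\phi:=\sum_i a_i \phi_i$; then
\[
\frac{1}{\mu(Q)}\int_Q \phi(y)\vec{f}(y)\,d\mu(y)=\sum_i a_i\vec{e_i}=\vec{v},
\]
while Cauchy--Schwarz on the coefficient vector gives $\|\phi\|_{L^{r'}(d\mu/\mu(Q))}\le\bigl(\sum_i a_i^2\bigr)^{1/2}\bigl(\sum_i\|\phi_i\|_{L^{r'}}^2\bigr)^{1/2}\le\sqrt{n}\,\bigl(\sum_i a_i^2\bigr)^{1/2}$, which is controlled by the previous step. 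Rescaling $\phi$ by the resulting dimensional constant places $\vec{v}$ inside $c_n\langle\langle \vec{f}\rangle\rangle_{r,Q}$.

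The main obstacle I foresee is the normalization bookkeeping: ``principal axes of the John ellipsoid'' is canonical only up to a choice of semiaxis length, so one must verify that the $\phi_i$ built from the definition of $\langle\langle\cdot\rangle\rangle_{r,Q}$ actually represent the semiaxes (not unit directions), and that the $\sqrt n$ loss from the John inclusion $\langle\langle \vec{f}\rangle\rangle_{r,Q}\subseteq\sqrt{n}E$ is absorbed into a single dimensional constant $c_n$ in the conclusion rather than being accidentally squared. Once this accounting is done, the argument closes with just John's theorem and a single application of Cauchy--Schwarz.
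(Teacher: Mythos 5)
There is a genuine gap, and it sits exactly where your proposal is vaguest. Your plan is: take the axes $\vec{e_i}$, represent each of them as $\frac{1}{\mu(Q)}\int_Q\phi_i\vec{f}\,d\mu$ with $\|\phi_i\|_{L^{r'}(d\mu/\mu(Q))}\le1$, expand a candidate $\vec{x}=\sum_i a_i\vec{e_i}$, and synthesize $\phi=\sum_i a_i\phi_i$. Two problems. First, the unit principal directions need not belong to $\langle\langle\vec{f}\rangle\rangle_{r,Q}$ at all; only the semiaxis vectors $\sigma_i\vec{e_i}$ (with $\sigma_i$ the semiaxis lengths of the John ellipsoid $E$) are guaranteed to lie in $E\subseteq\langle\langle\vec{f}\rangle\rangle_{r,Q}$, so the representing densities you can actually produce satisfy $\frac{1}{\mu(Q)}\int_Q\phi_i\vec{f}\,d\mu=\sigma_i\vec{e_i}$. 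Consequently the coefficients you must control in the synthesis are $a_i/\sigma_i=\langle\vec{e_i},\vec{x}\rangle/\sigma_i$, not $a_i$; your claimed intermediate bound ``$\sum_i a_i^2\lesssim c_n$'' is not even dimensionally coherent (the $a_i$ scale with $\vec f$), and nothing in your argument converts the hypothesis, which bounds $|\langle\vec{e_i},\vec{x}\rangle|$ by $\frac1n\bigl(\frac{1}{\mu(Q)}\int_Q|\langle\vec{e_i},\vec{f}\rangle|^r\bigr)^{1/r}$, into a bound on $a_i/\sigma_i$.

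The missing idea is precisely the comparison between those $L^r$ averages and the semiaxis lengths, which is the heart of the paper's proof: for each $j$ choose, by $L^r$--$L^{r'}$ duality, a $g_j$ with $\langle g_j\rangle_{r',Q}\le1$ and $\frac{1}{\mu(Q)}\int_Q g_j\langle\vec{e_j},\vec{f}\rangle\,d\mu=\bigl(\frac{1}{\mu(Q)}\int_Q|\langle\vec{e_j},\vec{f}\rangle|^r\bigr)^{1/r}$; then $\vec{y}=\frac{1}{\mu(Q)}\int_Q g_j\vec{f}\,d\mu\in\langle\langle\vec{f}\rangle\rangle_{r,Q}\subseteq\sqrt{n}\,E$, and reading off its $\vec{e_j}$-component yields $\bigl(\frac{1}{\mu(Q)}\int_Q|\langle\vec{e_j},\vec{f}\rangle|^r\bigr)^{1/r}\le\sqrt{n}\,\sigma_j$. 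With that in hand the hypothesis gives $|\langle\vec{e_i},\vec{x}\rangle|\le\sigma_i/\sqrt{n}$, hence $\sum_i\bigl(|\langle\vec{e_i},\vec{x}\rangle|/\sigma_i\bigr)^2\le1$ and $\vec{x}\in E\subseteq\langle\langle\vec{f}\rangle\rangle_{r,Q}$ — no synthesis of densities is needed, and no extra dimensional constant appears. Note also that the lemma's conclusion is membership in $\langle\langle\vec{f}\rangle\rangle_{r,Q}$ itself (the factor $\frac1n$ in the hypothesis is calibrated for this), whereas your scheme at best lands in a dilate $c_n\langle\langle\vec{f}\rangle\rangle_{r,Q}$; that weakening could be absorbed downstream, but the duality step above is indispensable and is what your proposal omits.
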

\[
|\langle \vec{e_{i}},\vec{x}\rangle |\leq \dfrac{1}{n}\left(\frac{1}{\mu(Q)}\int_{Q}|\langle \vec{e_{i}},\vec{f}\rangle|^{r}\right)^{\frac{1}{r}}
\]
then $\vec{x}\in\langle\langle \vec{f}\rangle\rangle_{r,Q}$.
\begin{proof}
For some $0\leq\sigma_{n}\leq..\leq\sigma_{1}$ the John Elipsoid
of $\langle\langle \vec{f}\rangle\rangle_{r,Q}$ consists of $\vec{y}\in\mathbb{R}^{n}$
such that
\[
\sum_{i=1}^{n}\left(\frac{|\langle \vec{e_{i}},\vec{y}\rangle|}{\sigma_{i}}\right)^{2}\leq1.
\]
Note that there exists $g_{j}$ with $\langle g_{j}\rangle_{r',Q}\leq 1$
such that 
\[
\frac{1}{\mu(Q)}\int_{Q}g_{j}\langle \vec{e_{j}},\vec{f}\rangle=\left(\frac{1}{\mu(Q)}\int_{Q}|\langle \vec{e_{i}},\vec{f}\rangle|^{r}\right)^{\frac{1}{r}}
\]
furthermore
\[
\vec{y}=\frac{1}{\mu(Q)}\int_{Q}g_{j}\vec{f}\in\langle\langle \vec{f}\rangle\rangle_{r,Q}\subset\sqrt{n}E
\]
and then $\frac{1}{\sqrt{n}}\vec{y}\in E$. Now we observe that
\begin{align*}
1 & \geq\sum_{i=1}^{n}\left(\frac{|\langle \vec{e_{i}},\vec{y}\rangle|}{\sqrt{n}\sigma_{i}}\right)^{2}\geq\left(\frac{|\langle \vec{e_{j}},\vec{y}\rangle|}{\sqrt{n}\sigma_{j}}\right)^{2}\\
 & =\left(\frac{\left|\left\langle \vec{e_{j}},\frac{1}{\mu(Q)}\int_{Q}g_{j}\vec{f}\right\rangle \right|}{\sqrt{n}\sigma_{j}}\right)^{2}=\left(\frac{\left|\frac{1}{\mu(Q)}\int_{Q}g_{j}\langle \vec{e_{j}},\vec{f}\rangle\right|}{\sqrt{n}\sigma_{j}}\right)^{2}\\
 & =\left(\frac{\left(\frac{1}{\mu(Q)}\int_{Q}|\langle \vec{e_{j}},\vec{f}\rangle|^{r}\right)^{\frac{1}{r}}}{\sqrt{n}\sigma_{j}}\right)^{2}.
\end{align*}
Consequently
\[
\left(\frac{\left(\frac{1}{\mu(Q)}\int_{Q}|\langle \vec{e_{j}},\vec{f}\rangle|^{r}\right)^{\frac{1}{r}}}{\sqrt{n}\sigma_{j}}\right)^{2}\leq 1
\]
for each $j=1,\dots,n$. 

Having that inequality at our disposal we can establish the criteria in the statement of the Lemma. Indeed, if for some $\vec{x}\in\mathbb{R}^{n}$ we have that for each for each $i$,
\[
|\langle \vec{e_{i}},\vec{x}\rangle|\leq\frac{1}{n}\left(\frac{1}{\mu(Q)}\int_{Q}|\langle \vec{e_{i}},\vec{f}\rangle|^{r}\right)^{\frac{1}{r}},
\]
then
\begin{align*}
\sum_{i=1}^{n}\left(\frac{|\langle \vec{e_{i}},\vec{x}\rangle|}{\sigma_{i}}\right)^{2} & \leq\sum_{i=1}^{n}\left(\frac{\left(\frac{1}{\mu(Q)}\int_{Q}|\langle \vec{e_{i}},\vec{f}\rangle|^{r}\right)^{\frac{1}{r}}}{n\sigma_{i}}\right)^{2}=\sum_{i=1}^{n}\left(\frac{1}{\sqrt{n}}\right)^{2}\left(\frac{\left(\frac{1}{\mu(Q)}\int_{Q}|\langle \vec{e_{i}},\vec{f}\rangle|^{r}\right)^{\frac{1}{r}}}{\sqrt{n}\sigma_{i}}\right)^{2}\\
 & \leq\sum_{i=1}^{n}\left(\frac{1}{\sqrt{n}}\right)^{2}=\sum_{i=1}^{n}\frac{1}{n}=1
\end{align*}
and consequently $\vec{x}\in\langle\langle\vec{f}\rangle\rangle_{r,Q}$ as we wanted to show.
\end{proof}
Generalizing  \cite[Lemma 2.3.9]{H} we can obtain the following Lemma, that will allow us to deal in a convenient way with the convex bodies involved in the convex body domination.
\begin{lem}
\label{lem:RepkQ}Let $\vec{f}\in L^{s}(Q;\mathbb{R}^{n})$, where
$Q\subset X$ and $0<\mu(Q)<\infty$,  and let
$\vec{g}:X\rightarrow\mathbb{R}^{n}$ be a measurable function such that
$\vec{g}(x)\in\langle\langle\vec{f}\rangle\rangle_{s,Q}$ for almost every
$x\in X$. Then there exists some measurable function $k(x,y)$ such
that for almost every $x\in X$
\[
\vec{g}(x)=\frac{1}{\mu(Q)}\int_{Q}k(x,y)\vec{f}(y)d\mu(y)
\]
where $\left\Vert k(x,y)\right\Vert _{L^{(s',\infty)}\left[(Q,\frac{d\mu(y)}{\mu(Q)}),(X,d\mu(x))\right]}=\left\Vert \left\Vert k(x,y)\right\Vert _{L^{s'}\left(Q,\frac{d\mu(y)}{\mu(Q)}\right)}\right\Vert _{L^{\infty}(X,d\mu(x))}\leq1$. 
\end{lem}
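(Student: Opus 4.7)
The content of the lemma is essentially a measurable selection statement: pointwise, for every $x$ with $\vec{g}(x)\in K:=\langle\langle\vec{f}\rangle\rangle_{s,Q}$, the very definition of the convex body produces some $\phi_{x}\in L^{s'}(Q,d\mu/\mu(Q))$ with $\|\phi_{x}\|_{L^{s'}}\leq 1$ and $\vec{g}(x)=\frac{1}{\mu(Q)}\int_{Q}\phi_{x}(y)\vec{f}(y)\,d\mu(y)$; the real task is to choose these $\phi_{x}$ in a jointly measurable way so that $k(x,y):=\phi_{x}(y)$ is a bona fide measurable function on $X\times Q$.

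To set up the framework I would introduce the bounded linear operator $T\colon L^{s'}(Q,d\mu/\mu(Q))\to\mathbb{R}^{n}$ defined by $T\phi=\frac{1}{\mu(Q)}\int_{Q}\phi\,\vec{f}\,d\mu$, which is bounded by Hölder's inequality (using $\vec{f}\in L^{s}(Q;\mathbb{R}^{n})$) and satisfies $T(B_{L^{s'}})=K$ with $B_{L^{s'}}$ the closed unit ball. The hypothesis then says $\vec{g}(x)\in T(B_{L^{s'}})$ for a.e.\ $x$.

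Next I would apply a measurable selection theorem to the multifunction $\Psi\colon X\to 2^{L^{s'}(Q)}$, $\Psi(x):=T^{-1}(\vec{g}(x))\cap B_{L^{s'}}$. For a.e.\ $x$, $\Psi(x)$ is non-empty (by hypothesis) and it is a closed convex subset of $L^{s'}(Q)$, being the intersection of a closed affine subspace with the closed unit ball. Measurability of $\Psi$ follows from the measurability of $\vec{g}$ and the (weak) continuity of $T$. Since $L^{s'}(Q,d\mu/\mu(Q))$ is a separable Banach space for $1\leq s'<\infty$, the Kuratowski--Ryll-Nardzewski selection theorem provides a strongly measurable selector $\psi\colon X\to L^{s'}(Q)$ with $\psi(x)\in\Psi(x)$ for a.e.\ $x$.

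Finally I would set $k(x,y):=\psi(x)(y)$. Joint measurability of $k$ follows by approximating $\psi$ by $L^{s'}$-valued simple functions $\psi_{m}=\sum_{j}\phi_{j,m}\chi_{A_{j,m}}$ (possible by separability of $L^{s'}(Q)$), for which $k_{m}(x,y)=\sum_{j}\phi_{j,m}(y)\chi_{A_{j,m}}(x)$ is manifestly jointly measurable, and then passing to the pointwise limit. The bound $\|k(x,\cdot)\|_{L^{s'}(d\mu/\mu(Q))}=\|\psi(x)\|_{L^{s'}}\leq 1$ is immediate, and the identity $\vec{g}(x)=T\psi(x)$ is the integral representation required. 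The main obstacle is the verification of the multifunction hypotheses of the selection theorem; a fully constructive alternative based on John's ellipsoid of $K$ (writing $\vec{g}(x)=\sum_{i}\lambda_{i}(x)\sigma_{i}\vec{e}_{i}$, where $\sigma_{i}\vec{e}_{i}\in K$ are represented by fixed $\phi_{i}\in B_{L^{s'}}$, and setting $k(x,y)=\sum_{i}\lambda_{i}(x)\phi_{i}(y)$) is available but only gives a bound of order $n$, which would have to be absorbed into the dimensional constant $c_{n,d}$ downstream.
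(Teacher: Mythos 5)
Your route is genuinely different from the paper's. The paper argues in three steps: it first treats simple $\vec{g}=\sum_j\vec{a_j}\chi_{A_j}$ directly (one representing $\phi_j$ per value, $k(x,y)=\sum_j\chi_{A_j}(x)\phi_j(y)$); it then approximates a general $\vec{g}$ uniformly by simple functions with values in $\langle\langle\vec{f}\rangle\rangle_{s,Q}$ via an $\varepsilon$-net, using compactness of the body; and finally it passes to the limit in the kernels by means of the Benedek--Panzone duality (the dual of $L^{(s,1)}$ is $L^{(s',\infty)}$) together with Banach--Alaoglu/weak-$*$ sequential compactness of the unit ball, identifying the limit kernel by testing against $\psi\in L^{1}(X)$ and Lebesgue differentiation. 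You instead recast the lemma as a measurable selection problem for $\Psi(x)=T^{-1}(\vec{g}(x))\cap B_{L^{s'}}$ and then upgrade the strongly measurable selector to a jointly measurable kernel. This is a legitimate and more conceptual alternative: it dispenses with the mixed-norm duality theorem altogether, at the price of invoking Kuratowski--Ryll-Nardzewski (or a Filippov-type measurable implicit function theorem), whereas the paper's argument stays within elementary duality and compactness and treats all $1\le s<\infty$ at once.

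Two points in your sketch need repair before it proves the lemma in the generality the paper uses. First, your separability hypothesis restricts to $1\le s'<\infty$, i.e.\ $s>1$; but the lemma is also applied with $s=1$ (the Haar-multiplier corollary produces kernels bounded in $L^{(\infty,\infty)}$), and $L^{\infty}(Q)$ is not norm-separable, so KRN as you invoke it does not apply there. The natural fix is to equip the ball $B$ with the weak topology (for $1<s'<\infty$) or the weak-$*$ topology (for $s'=\infty$): it is then compact and metrizable, $T$ remains continuous because $\vec{f}\in L^{s}$ componentwise, and measurability of $\Psi$ becomes checkable, since for every closed $C\subset B$ the image $T(C)$ is compact, hence $\{x:\vec{g}(x)\in T(C)\}$ is measurable, and open subsets of $B$ are countable unions of closed sets. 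This is where the real work lies: ``measurability of $\Psi$ follows from measurability of $\vec{g}$ and continuity of $T$'' is not a proof (with the norm topology, $T(B\cap U)$ for open $U$ is in general only analytic). Second, the last step is not literally a pointwise limit: the simple approximations converge to $\psi(x)$ only in $L^{s'}(Q)$ for each $x$, so $k_m(x,y)$ need not converge pointwise in $y$; you need the standard fact that a strongly measurable $L^{s'}(Q)$-valued map has a jointly measurable representative (pass to a subsequence with summable increments and use Chebyshev, Borel--Cantelli and Fubini). Your John-ellipsoid fallback is indeed fully constructive and measurable, with a loss of order $n$ that the paper's dimensional constant can absorb, provided you also handle the degenerate case in which the body spans a proper subspace of $\mathbb{R}^{n}$.
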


\begin{proof}
We split the proof in cases 
\end{proof}
\begin{enumerate}
\item Suppose $\vec{g}$ is simple, namely $\vec{g}=\sum_{j=1}^{J}\vec{a_{j}}\chi_{A_{j}}$
where the sets $A_{j}$ are measurable and disjoint and have finite
measure. Note that since $\vec{a_{j}}\in\langle\langle\vec{f}\rangle\rangle_{s,Q}$
we have that 
\[
\vec{a_{j}}=\frac{1}{\mu(Q)}\int_{Q}\phi_{j}(y)\vec{f}(y)d\mu(y)
\]
for some $\phi_{j}$ with $\|\phi_{j}\|_{L^{s'}\left(\frac{d\mu}{\mu(Q)}\right)}\leq1$.
Note that then 
\[
\vec{g}(x)=\frac{1}{\mu(Q)}\int_{Q}\sum_{j=1}^{J}\chi_{A_{j}}(x)\phi_{j}(y)\vec{f}(y)d\mu(y)
\]
and also
\[
\left\Vert \left\Vert \sum_{j=1}^{J}\chi_{A_{j}}(x)\phi_{j}(y)\right\Vert _{L^{s'}\left(Q,\frac{d\mu(y)}{\mu(Q)}\right)}\right\Vert _{L^{\infty}(X,d\mu(x))}\leq1.
\]
Indeed 
\begin{align*}
&\left\Vert \left\Vert \sum_{j=1}^{J}\chi_{A_{j}}(x)\phi_{j}(y)\right\Vert _{L^{s'}\left(Q,\frac{d\mu(y)}{\mu(Q)}\right)}\right\Vert _{L^{\infty}(X,d\mu(x))}\\ 
&\leq\left\Vert \left(\frac{1}{\mu(Q)}\int_{Q}\left(\left|\sum_{j=1}^{J}\chi_{A_{j}}(x)\phi_{j}(y)\right|\right)^{s'}d\mu(y)\right)^{\frac{1}{s'}}\right\Vert _{L^{\infty}(X,d\mu(x))}\\
 & \leq\left\Vert \left(\frac{1}{\mu(Q)}\int_{Q}\left(\sum_{j=1}^{J}\chi_{A_{j}}(x)\left|\phi_{j}(y)\right|\right)^{s'}d\mu(y)\right)^{\frac{1}{s'}}\right\Vert _{L^{\infty}(X,d\mu(x))}\\
 & \leq\left\Vert \sum_{j=1}^{J}\chi_{A_{j}}(x)\left(\frac{1}{\mu(Q)}\int_{Q}\left(\left|\phi_{j}(y)\right|\right)^{s'}d\mu(y)\right)^{\frac{1}{s'}}\right\Vert _{L^{\infty}(X,d\mu(x))}\\
 & =\left\Vert \sum_{j=1}^{J}\chi_{A_{j}}(x)\right\Vert _{L^{\infty}(X,d\mu(x))}=1
\end{align*}
and hence we are done in this case choosing $k(x,y)=\sum_{j=1}^{J}\chi_{A_{j}}(x)\phi_{j}(y)$.
\item Approximation by simple functions. Let $g$ be a general function
as in the assumptions and let $\varepsilon>0$. Since $\langle\langle\vec{f}\rangle\rangle_{s,Q}$
is a compact set, there exists a $\varepsilon$-net $(\vec{y_{j}})_{j=1}^{J}$
in $\langle\langle\vec{f}\rangle\rangle_{s,Q}$ namely, for every
$\vec{y}\in\langle\langle\vec{f}\rangle\rangle_{s,Q}$ there exists $\vec{y_{j}}$
such that 
\[
|\vec{y}-\vec{y_{j}}|\leq\varepsilon.
\]
We define the measurable sets 
\[
A_{j}:=\{x\in X\,:\,|\vec{g}(x)-\vec{y_{j}}|\leq\varepsilon\quad\text{and}\quad|\vec{g}(x)-\vec{y_{i}}|>\varepsilon\quad\forall i=1,\dots,j-1\}
\]
and let 
\[
\vec{g_{\varepsilon}}(x):=\sum_{j=1}^{J}\vec{y_{j}}\chi_{A_{j}}.
\]
Note that 
\begin{align*}
\|\vec{g}(x)-\vec{g}_{\varepsilon}(x)\|_{L^{\infty}(X)}\leq\varepsilon
\end{align*}
Furthermore, $\vec{g_{\varepsilon}}\in\langle\langle \vec{f}\rangle\rangle_{s,Q}$,
since each $\vec{y_{j}}\in\langle\langle \vec{f}\rangle\rangle_{s,Q}$ and the
sets $A_{j}$ are pairwise disjoint. 
\item By the preceding parts we can find simple functions $\vec{g_{j}}\rightarrow \vec{g}$
in $L^{\infty}(X)$ and kernels $k_{j}(x,y)$ with 
\[
\left\Vert \left\Vert k_{j}(x,y)\right\Vert _{L^{s'}\left(Q,\frac{d\mu(y)}{\mu(Q)}\right)}\right\Vert _{L^{\infty}(X,d\mu(x))}\leq1
\]
and
\[
\vec{g_{j}}(x)=\frac{1}{d\mu(Q)}\int_{Q}k_{j}(x,y)\vec{f}(y)d\mu(y).
\]
Note that as it was established in \cite[Theorem 1]{BP} the dual space of
\[E=L^{(s,1)}\left[(Q,\frac{d\mu(y)}{\mu(Q)}),(X,d\mu(x))\right]\]
is precisely \[L^{(s',\infty)}\left[(Q,\frac{d\mu(y)}{\mu(Q)}),(X,d\mu(x))\right]\]
and any functional $\Lambda$ in the dual of $E$ can be expressed
in the as 
\[
\Lambda(\rho)=\int\rho(x,y)h(x,y)d\mu(x,y)
\]
for some $h\in L^{(s',\infty)}\left[(Q,\frac{d\mu(y)}{\mu(Q)}),(X,d\mu(x))\right]$.
By the Banach-Alaoglu theorem applied to $E$ we have then that the
unit ball $L^{(s',\infty)}\left[(Q,\frac{d\mu(y)}{\mu(Q)}),(X,d\mu(x))\right]$
is weak-{*} sequential compact. In particular this yields that given
the sequence $\{k_{j}(x,y)\}$ that is contained in the closed unit
ball $L^{(s',\infty)}\left[(Q,\frac{d\mu(y)}{\mu(Q)}),(X,d\mu(x))\right]$
there exists a subsequence $\{k_{N_{j}}\}$ and $k\in L^{(s',\infty)}\left[(Q,\frac{d\mu(y)}{\mu(Q)}),(X,d\mu(x))\right]$
such that 
\[
\lim_{j\rightarrow\infty}\int k_{N_{j}}(x,y)\eta(x,y)d\mu(x,y)=\int k(x,y)\eta(x,y)d\mu(x,y)
\]
for every $\eta(x,y)\in L^{(s,1)}\left[(Q,\frac{d\mu(y)}{\mu(Q)}),(X,d\mu(x))\right]$.
Now we observe that for every $\psi\in L^{1}(X)$ we have that $\psi(x)\vec{f}(y)\in L^{(s,1)}\left[(Q,\frac{d\mu(y)}{\mu(Q)}),(X,d\mu(x))\right]$.
Taking that into account, 
\begin{align*}
 & \int_{X}\vec{g}(x)\psi(x)dx\\
= & \int_{X}\lim_{j}\vec{g_{N_{j}}}(x)\psi(x)d\mu(x)=\int_{X}\lim_{j}\frac{1}{\mu(Q)}\int_{Q}k_{N_{j}}(x,y)\vec{f}(y)dy\psi(x)d\mu(x)\\
= & \int_{X}\lim_{j}\frac{1}{\mu(Q)}\int_{Q}k_{N_{j}}(x,y)\vec{f}(y)\psi(x)dyd\mu(x)\\
= & \int_{X}\frac{1}{\mu(Q)}\int_{Q}k(x,y)\vec{f}(y)dy\psi(x)d\mu(x)=\int_{X}\vec{G}(x)\psi(x)d\mu(x).
\end{align*}
Note that since the preceding identity holds for every $\psi\in L^{1}(X)$
we have that, since Lebesgue differentiation property holds, $\vec{g}(x)=\vec{G}(x)$
$\mu$-a.e. In other words, 
\[
\vec{g}(x)=\frac{1}{\mu(Q)}\int_{Q}k(x,y)\vec{f}(y)dy
\]
for some $k(x,y)\in L^{(s',\infty)}\left[(Q,\frac{d\mu(y)}{\mu(Q)}),(X,d\mu(x))\right]$
as we wanted to show. 
\end{enumerate}

\subsection{Proof of Theorem \ref{thm:ThmSparse}}

We will rely upon the following Lemma. 
\begin{lem}
\label{lem:It}Let $(X,d,\mu)$ be a space of homogeneous type and
$\mathcal{D}$ a dyadic system with parameters $c_{0}$, $C_{0}$
and $\delta$. Let us fix $\alpha\geq\frac{3c_{d}^{2}}{\delta}$ and
let $\vec{f}:X\rightarrow\mathbb{R}^{n}$ be a boundedly supported
function such that $|\vec{f}|\in L^{s}(X)$. Let $1\le q,r<\infty$
and $s=\max(q,r)$, and assume that there exist non-increasing functions
$\psi$ and $\phi$ such that for every ball $B$ and every boundedly supported function $g\in L^{s}(X,\mathbb{R})$ 
\[
\mu\left(\{x\in B:|T(g\chi_{B})(x)|>\psi(\rho)\langle g\rangle_{q,B}\}\right)\le\rho\mu(B)\quad(0<\rho<1)
\]
and 
\[
\mu\left(\{x\in B:\mathcal{M}_{T,\alpha}^{\#}(g\chi_{B})(x)>\phi(\rho)\langle g\rangle_{r,B}\}\right)\le\rho\mu(B)\quad(0<\rho<1)
\]
Then for every cube $Q$ there exist disjoint subcubes $P_{j}\in\mathcal{D}(Q)$ such
that 
\[
\sum_{j}\mu(P_{j})\leq\frac{1}{2}\mu(Q)
\]
and 
\[T(\vec{f}\chi_{\alpha Q})(x)\chi_{Q}-\sum_{j}T(\vec{f}\chi_{\alpha P_{j}})(x)\chi_{P_{j}}(x)\in\text{\ensuremath{\kappa_{n,\rho,s}}}\langle\langle\vec{f}\rangle\rangle_{s,\alpha Q}\chi_{Q}\]
where $\kappa_{n,\rho,s}=n\left(3B(\rho)+B(\rho)\left(\rho^{-1}\|M\|_{L^{s}\rightarrow L^{s,\infty}}\right)^{\frac{1}{s}}\right)$
and $B(\rho)=\psi(\rho)+\phi(\rho)$ with $\rho$ a constant depending
on $n$ and on the parameters defining $\mathcal{D}$. Furthermore,
there exists a function $k_{Q}$ with 
\[
\left\Vert k_{Q}(x,y)\right\Vert _{L^{(s',\infty)}\left[(\alpha Q,\frac{d\mu(y)}{\mu(\alpha Q)}),( Q,d\mu(x))\right]}=\left\Vert \left\Vert k_{Q}(x,y)\right\Vert _{L^{s'}\left(\alpha Q,\frac{d\mu(y)}{\mu(\alpha Q)}\right)}\right\Vert _{L^{\infty}( Q,d\mu(x))}\leq1
\]
such that 
\[ T(\vec{f}\chi_{\alpha Q})(x)\chi_{Q}(x)-\sum_{j}T(\vec{f}\chi_{\alpha P_{j}})(x)\chi_{P_{j}}(x)=\kappa_{n,\rho,s}\frac{1}{\mu(\alpha Q)}\int_{\alpha Q}k_{Q}(x,y)\vec{f}(y)d\mu(y).\]
\end{lem}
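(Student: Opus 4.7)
The plan is a coordinate-wise Calder\'on--Zygmund stopping argument with respect to a John-ellipsoid basis, in the spirit of Lerner--Ombrosi. Let $\{\vec e_i\}_{i=1}^n$ denote the principal axes of the John ellipsoid of $\langle\langle\vec f\rangle\rangle_{s,\alpha Q}$, and apply the two hypotheses of the statement to each scalar $\langle \vec e_i,\vec f\rangle$ on the ball $\alpha Q$, together with the weak-type $(s,s)$ bound for the Hardy--Littlewood maximal operator $M$, to build an exceptional set
\begin{align*}
E & =\bigcup_{i=1}^{n}\Bigl(\{x\in Q:|T(\langle \vec e_i,\vec f\rangle\chi_{\alpha Q})(x)|>\psi(\rho_1)\langle|\langle \vec e_i,\vec f\rangle|\rangle_{q,\alpha Q}\}\\
 & \hspace{1em}\cup\{x\in Q:\mathcal{M}_{T,\alpha}^{\#}(\langle \vec e_i,\vec f\rangle\chi_{\alpha Q})(x)>\phi(\rho_2)\langle|\langle \vec e_i,\vec f\rangle|\rangle_{r,\alpha Q}\}\Bigr)\\
 & \hspace{1em}\cup\{x\in Q:M(|\vec f|^{s}\chi_{\alpha Q})(x)^{1/s}>c_{\rho_3}\langle|\vec f|\rangle_{s,\alpha Q}\},
\end{align*}
with $\rho_1,\rho_2,c_{\rho_3}$ calibrated so that $\mu(E)\leq\tfrac14\mu(Q)$, after absorbing the doubling passage from $\alpha Q$ to $Q$. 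I then take $\{P_j\}\subset\mathcal{D}(Q)$ to be the maximal dyadic cubes with $\mu(P_j\cap E)>\tfrac12\mu(P_j)$; they are pairwise disjoint, $\sum_j\mu(P_j)\leq 2\mu(E)\leq\tfrac12\mu(Q)$, and almost every point of $Q\setminus\bigcup_j P_j$ lies outside $E$.

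To dominate each coordinate of the difference and invoke Lemma \ref{lem:BelongTest}, I separate two regions. On $Q\setminus\bigcup_j P_j$, the complement of $E$ gives directly
\[|\langle \vec e_i,T(\vec f\chi_{\alpha Q})(x)\rangle|\leq \psi(\rho_1)\langle|\langle \vec e_i,\vec f\rangle|\rangle_{s,\alpha Q},\]
using $s\geq q$ and H\"older. On each $P_j$, the hypothesis $\alpha\geq 3c_d^2/\delta$ guarantees $\alpha P_j\subset\alpha Q$, so that
\[T(\vec f\chi_{\alpha Q})(x)-T(\vec f\chi_{\alpha P_j})(x)=T\bigl((\vec f\chi_{\alpha Q})\chi_{X\setminus \alpha P_j}\bigr)(x).\]
Choosing the ball $B=1P_j$, for which $\alpha B=\alpha P_j$, the definition of $\mathcal{M}_{T,\alpha}^{\#}$ yields, for a.e.\ $x\in P_j$ and any $y\in P_j$,
\[|\langle \vec e_i,T((\vec f\chi_{\alpha Q})\chi_{X\setminus \alpha P_j})(x)-T((\vec f\chi_{\alpha Q})\chi_{X\setminus \alpha P_j})(y)\rangle|\leq \phi(\rho_2)\langle|\langle \vec e_i,\vec f\rangle|\rangle_{r,\alpha Q}.\]

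Since $P_j\not\subset E$, one may pick $y\in P_j\setminus E$ that additionally avoids the bad set produced by applying the $\psi$-hypothesis on the ball $\alpha P_j$ (a choice allowed by doubling for suitable $\rho_1,\rho_2$); at this $y$ the remaining value splits as $T(\vec f\chi_{\alpha Q})(y)-T(\vec f\chi_{\alpha P_j})(y)$, where the first summand is handled as in the good region and the second by the $\psi$-hypothesis on $\alpha P_j$ together with the $M$-bound $\langle|\vec f|\rangle_{s,\alpha P_j}\leq c_{\rho_3}\langle|\vec f|\rangle_{s,\alpha Q}$ inherited from $y\notin E$. Setting $\rho_1=\rho_2=\rho$ and combining all estimates gives $|\langle \vec e_i,\cdot\rangle|\leq \tfrac{1}{n}\kappa_{n,\rho,s}\langle|\langle \vec e_i,\vec f\rangle|\rangle_{s,\alpha Q}$ for every $i$, and Lemma \ref{lem:BelongTest} places the difference in $\kappa_{n,\rho,s}\langle\langle\vec f\rangle\rangle_{s,\alpha Q}\chi_Q$. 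Lemma \ref{lem:RepkQ} applied on the ball $\alpha Q$ then supplies the kernel $k_Q$ with the stated mixed-norm estimate.

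The main obstacle is the bookkeeping of the parameters $\rho_1,\rho_2,\rho_3$ together with the doubling constants relating $P_j$, $1P_j$, $\alpha P_j$ and $\alpha Q$, so that simultaneously $\mu(E)\leq\tfrac14\mu(Q)$ holds and the final constant collapses to $\kappa_{n,\rho,s}=n\bigl(3B(\rho)+B(\rho)(\rho^{-1}\|M\|_{L^s\to L^{s,\infty}})^{1/s}\bigr)$. The vector-valued flavour enters only through the factor $n$ in the coordinate-wise union in $E$; the rest reduces to iterating scalar Lerner--Ombrosi-type arguments along the orthonormal basis $\{\vec e_i\}$.
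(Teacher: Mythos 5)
Your overall strategy matches the paper's: work coordinate-wise along the principal axes of the John ellipsoid of $\langle\langle\vec{f}\rangle\rangle_{s,\alpha Q}$, build an exceptional set from the two weak-type hypotheses plus a maximal function, select stopping cubes in $\mathcal{D}(Q)$, and conclude via Lemmas \ref{lem:BelongTest} and \ref{lem:RepkQ}. However, your selection of the cubes $P_j$ has a genuine gap. You take the maximal $P_j\in\mathcal{D}(Q)$ with $\mu(P_j\cap E)>\tfrac12\mu(P_j)$ and later ``pick $y\in P_j\setminus E$ that additionally avoids the bad set produced by applying the $\psi$-hypothesis on $\alpha P_j$''. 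Your stopping rule forces the density of $E$ in $P_j$ to be \emph{at least} $1/2$, which is the wrong direction: nothing prevents $P_j\subset E$ (any cube contained in $E$ satisfies your criterion and can itself be maximal), so the good point $y$ need not exist; and even when $\mu(P_j\setminus E)>0$, avoiding in addition the $\psi$-bad set on $\alpha P_j$, whose measure may be as large as $\rho_1\mu(\alpha P_j)\simeq\rho_1 c_1\mu(P_j)$, requires a quantitative lower bound like $\mu(P_j\setminus E)\geq\tfrac12\mu(P_j)$, which your selection cannot give (maximality only controls the parent, and the parent-to-child measure ratio intervenes). The paper avoids this by taking the maximal cubes of $\{x\in Q: M^{\mathcal{D}(Q)}(\chi_{\Omega})(x)>1/c_2\}$ with $c_2$ large, so that $\tfrac{1}{c_2}\leq\mu(P_j\cap\Omega)/\mu(P_j)\leq\tfrac12$; then at least half of every $P_j$ lies \emph{outside} $\Omega$, which simultaneously gives $\sum_j\mu(P_j)\leq c_2\mu(\Omega)\leq\tfrac12\mu(Q)$ after choosing $\rho=\tfrac{1}{6nc_1c_2}$, the a.e.\ inclusion $\Omega\subset\bigcup_j P_j$, and the pigeonhole step $\inf_{x'\in P_j\setminus\Omega}|T(\langle\vec{f}\chi_{\alpha P_j},\vec{e}_i\rangle)(x')|\leq B(\rho)\langle|\langle\vec{f},\vec{e}_i\rangle|\rangle_{s,\alpha P_j}$, since otherwise $\tfrac12\mu(P_j)\leq\mu(P_j\setminus\Omega)\leq\rho c_1\mu(P_j)$, a contradiction. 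You need this fix for your argument on $P_j$ to go through.

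A secondary but real defect: your maximal-function component of $E$ is $M(|\vec{f}|^{s}\chi_{\alpha Q})^{1/s}$, so your transfer from $\alpha P_j$ to $\alpha Q$ yields $\langle|\vec{f}|\rangle_{s,\alpha P_j}\lesssim\langle|\vec{f}|\rangle_{s,\alpha Q}$ and hence a bound on $|\langle\vec{e}_i,\cdot\rangle|$ by a multiple of $\langle|\vec{f}|\rangle_{s,\alpha Q}$, not of $\bigl(\frac{1}{\mu(\alpha Q)}\int_{\alpha Q}|\langle\vec{e}_i,\vec{f}\rangle|^{s}\bigr)^{1/s}$, which is what Lemma \ref{lem:BelongTest} requires (with the factor $1/n$); the two can differ arbitrarily when $\vec{f}$ is nearly orthogonal to $\vec{e}_i$, so your claimed coordinate-wise inequality does not follow and the constant $\kappa_{n,\rho,s}$ is not recovered. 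The remedy, as in the paper, is to put into the exceptional set the coordinate-wise maximal functions $M_{s}(\langle\vec{e}_i,\vec{f}\rangle\chi_{\alpha Q})$, normalized by $\bigl(\rho^{-1}\|M\|_{L^{s}\rightarrow L^{s,\infty}}\frac{1}{\mu(\alpha Q)}\int_{\alpha Q}|\langle\vec{e}_i,\vec{f}\rangle|^{s}\bigr)^{1/s}$ for each $i$, so that the inherited bound on $\langle|\langle\vec{e}_i,\vec{f}\rangle|\rangle_{s,\alpha P_j}$ is in terms of the same coordinate average over $\alpha Q$ and the estimate collapses to $\kappa_{n,\rho,s}=n\bigl(3B(\rho)+B(\rho)(\rho^{-1}\|M\|_{L^{s}\rightarrow L^{s,\infty}})^{1/s}\bigr)$. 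The remaining ingredients of your sketch (the use of $s\geq q$, the choice $B=1P_j$ with $\alpha(1P_j)=\alpha P_j$ in $\mathcal{M}_{T,\alpha}^{\#}$, and the appeal to Lemma \ref{lem:RepkQ} for the kernel representation) are consistent with the paper.
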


\begin{proof}
By the doubling condition of the measure there exists $c_{1}$ such
that $\mu(\alpha P)\leq c_{1}\mu(P)$ for any cube $P$. Now we observe
that for any disjoint family $\{P_{j}\}\subset\mathcal{D}(Q)$ 
\[
T(\vec{f}\chi_{\alpha Q})(x)\chi_{Q}=T(\vec{f}\chi_{\alpha Q})(x)\chi_{Q\setminus\cup P_{j}}+\sum_{j}T(\vec{f}\chi_{\alpha Q\setminus\alpha P_{j}})(x)\chi_{P_{j}}+\sum_{j}T(\vec{f}\chi_{\alpha P_{j}})(x)\chi_{P_{j}}(x)
\]
and we have that 
\[
T(\vec{f}\chi_{\alpha Q})(x)\chi_{Q}-\sum_{j}T(\vec{f}\chi_{\alpha P_{j}})(x)\chi_{P_{j}}(x)=T(\vec{f}\chi_{\alpha Q})(x)\chi_{Q\setminus\cup P_{j}}+\sum_{j}T(\vec{f}\chi_{\alpha Q\setminus\alpha P_{j}})(x)\chi_{P_{j}}.
\]
We claim that there exists a disjoint family $\{P_{j}\}\subset\mathcal{D}(Q)$
with 
\[
\sum_{j}\mu(P_{j})\leq\frac{1}{2}\mu(Q)
\]
and 
\begin{equation}
T(\vec{f}\chi_{\alpha Q})(x)\chi_{Q\setminus\cup P_{j}}+\sum_{j}T(\vec{f}\chi_{\alpha Q\setminus\alpha P_{j}})(x)\chi_{P_{j}}\in\kappa_{n,\rho,s}\langle\langle\vec{f}\rangle\rangle_{s,\alpha Q}\chi_{Q}\label{eq:Claim}
\end{equation}
Actually by Lemma \ref{lem:BelongTest} it will be enough to show
that for each $i=1,\dots,n$, 
\begin{equation}
\left|\left\langle T(\vec{f}\chi_{\alpha Q})(x)\chi_{Q\setminus\cup P_{j}}+\sum_{j}T(\vec{f}\chi_{\alpha Q\setminus\alpha P_{j}})(x)\chi_{P_{j}},\vec{e_{i}}\right\rangle \right|\leq\kappa_{n,\rho,s}\frac{1}{n}\left(\frac{1}{\mu(\alpha Q)}\int_{\alpha Q}|\langle\vec{f},\vec{e_{i}}\rangle|^{s}\right)^{\frac{1}{s}}\label{eq:Claim2}
\end{equation}
where $\{\vec{e_{i}}\}_{i=1}^{n}$ are the principal axis of the John Ellipsoid
of $\kappa_{n,\rho,s}\langle\langle\vec{f}\rangle\rangle_{s,\alpha Q}$.
For $\rho\in(0,1)$ to be chosen and each $i\in\{1,\dots,n\}$ let
\[
\mathcal{\tilde{M}}_{T,i}(f)=\max\left\{ \frac{|T(\langle\vec{f}\chi_{\alpha Q},\vec{e_{i}}\rangle)|}{\left(B(\rho)\frac{1}{\mu(\alpha Q)}\int_{\alpha Q}|\langle\vec{f},\vec{e_{i}}\rangle|^{s}\right)^{\frac{1}{s}}},\frac{\mathcal{M}_{T,\alpha}^{\#}(\langle\vec{f}\chi_{\alpha Q},\vec{e_{i}}\rangle)}{\left(B(\rho)\frac{1}{\mu(\alpha Q)}\int_{\alpha Q}|\langle\vec{f},\vec{e_{i}}\rangle|^{s}\right)^{\frac{1}{s}}}\right\} 
\]
and let us define the sets 
\[
\Omega_{i}=\left\{ x\in Q\,:\,\max\left\{ \frac{M_{s}(\langle\vec{f}\chi_{\alpha Q},\vec{e_{i}}\rangle)(x)}{\left(\rho^{-1}\|M\|_{L^{s}\rightarrow L^{s,\infty}}\frac{1}{\mu(\alpha Q)}\int_{\alpha Q}|\langle\vec{f},\vec{e_{i}}\rangle|^{s}\right)^{\frac{1}{s}}},\mathcal{\tilde{M}}_{T,i}(\vec{f})\right\} >1\right\} 
\]
and $\Omega=\bigcup_{i}\Omega_{i}$. Observe that 
\begin{align*}
\begin{split}\mu(\Omega) & \leq\sum_{i=1}^{n}\mu\left(\left\{ x\in Q\,:\,\frac{M_{s}(\langle\vec{f}\chi_{\alpha Q},\vec{e_{i}}\rangle)(x)}{\left(\rho^{-1}\|M\|_{L^{s}\rightarrow L^{s,\infty}}\frac{1}{\mu(\alpha Q)}\int_{\alpha Q}|\langle\vec{f},\vec{e_{i}}\rangle|^{s}\right)^{\frac{1}{s}}}>1\right\} \right)\\
 & +\sum_{i=1}^{n}\mu\left(\left\{ x\in Q\,:\,\frac{|T(\langle\vec{f}\chi_{\alpha Q},\vec{e_{i}}\rangle)(x)|}{B(\rho)\left(\frac{1}{\mu(\alpha Q)}\int_{\alpha Q}|\langle\vec{f},\vec{e_{i}}\rangle|^{s}\right)^{\frac{1}{s}}}>1\right\} \right)\\
 & +\sum_{i=1}^{n}\mu\left(\left\{ x\in Q\,:\,\frac{\mathcal{M}_{T,\alpha}^{\#}(\langle\vec{f}\chi_{\alpha Q},\vec{e_{i}}\rangle)(x)}{B(\rho)\left(\frac{1}{\mu(\alpha Q)}\int_{\alpha Q}|\langle\vec{f},\vec{e_{i}}\rangle|^{s}\right)^{\frac{1}{s}}}>1\right\} \right)\\
 & \leq\left(\sum_{i=1}^{n}3\rho\right)\mu(\alpha Q)\leq3n\rho c_{1}\mu(Q).
\end{split}
\end{align*}
Now we take the local Calder\'on-Zygmund decomposition (see \cite[Lemma 4.5]{FN})
of 
\[
\Omega_{c_{2}}=\left\{ s\in Q\,:\,M^{\mathcal{D}(Q)}(\chi_{\Omega})>\frac{1}{c_{2}}\right\}. 
\]
By the weak type $(1,1)$  of the operator $M^{\mathcal{D}(Q)}$ we have that for a suitable choice of $c_{2}$, $\Omega_{c_{2}}$ is
a proper subset of $Q$ and that there exists a family $\{P_{j}\}\subset\mathcal{D}(Q)$
such that $\Omega_{c_{2}}=\bigcup P_{j}$ and 
\[
\frac{1}{c_{2}}\leq\frac{\mu(P_{j}\cap\Omega)}{\mu(P_{j})}\leq\frac{1}{2}.
\]
From the leftmost estimate it follows that 
\[
\sum_{j}\mu(P_{j})\leq c_{2}\sum_{j}\mu(P_{j}\cap\Omega)\leq c_{2}\mu(\Omega)\leq3n\rho c_{1}c_{2}\mu(Q),
\]
and choosing $\rho=\frac{1}{6nc_{1}c_{2}}$, 
\[
\sum_{j}\mu(P_{j})\leq\frac{1}{2}\mu(Q).
\]
Note that by the Lebesgue differentiation theorem there exists some
set $N$ of measure zero such that 
\begin{equation}
\Omega\setminus N\subset\Omega_{c_{2}}=\bigcup_{j}P_{j}\label{eq:Null}.
\end{equation}
Now we show that for this family $\{P_{j}\}$ \eqref{eq:Claim} holds.
Taking \eqref{eq:Null} into account if $x\in Q\setminus\cup P_{j}$
then the inequalities in $\Omega$ hold reversed a.e. in particular,
\[
\left|\langle T(\vec{f}\chi_{\alpha Q})(x),\vec{e_{i}}\rangle\right|=\left|T(\langle\vec{f}\chi_{\alpha Q},\vec{e_{i}}\rangle)(x)\right|\leq B(\rho)\left(\frac{1}{\mu(\alpha Q)}\int_{\alpha Q}|\langle\vec{f},\vec{e_{i}}\rangle|^{s}\right)^{\frac{1}{s}}.
\]
Now we deal with each term $T(\vec{f}\chi_{\alpha Q\setminus\alpha P_{j}})(x)\chi_{P_{j}}(x)$.
First we note that it is not hard to check that $\mu(P_{j}\setminus\Omega)\not=0.$ 
Then for each $x\in P_j$ and $x'\in P_j\setminus\Omega$,  
\[
\begin{split} & \left|\langle T(\vec{f}\chi_{\alpha Q\setminus\alpha P_{j}})(x)\chi_{P_j},\vec{e_{i}}\rangle\right|=|T(\langle\vec{f}\chi_{\alpha Q\setminus\alpha P_{j}},\vec{e_{i}}\rangle)(x)\chi_{P_j}|\\
	& \leq|T(\langle\vec{f}\chi_{\alpha Q\setminus\alpha P_{j}},\vec{e_{i}}\rangle)(x)\chi_{P_j}-T(\langle\vec{f}\chi_{\alpha Q\setminus\alpha P_{j}},\vec{e_{i}}\rangle)(x')|+\left|T(\langle\vec{f}\chi_{\alpha Q\setminus\alpha P_{j}},\vec{e_{i}}\rangle)(x')\right|.
\end{split}
\]
For the first term, given that $x'\in P_{j}\setminus\Omega$, we have that 
\[
\begin{split}
	|T(\langle\vec{f}\chi_{\alpha Q\setminus\alpha P_{j}},\vec{e_{i}}\rangle)(x)-T(\langle\vec{f}\chi_{\alpha Q\setminus\alpha P_{j}},\vec{e_{i}}\rangle)(x')|&\leq\mathcal{M}_{T,\alpha}^{\#}(\langle\vec{f}\chi_{\alpha Q},\vec{e_{i}}\rangle)(x')\\
	& \leq B(\rho)\left(\frac{1}{\mu(\alpha Q)}\int_{\alpha Q}|\langle\vec{f},\vec{e_{i}}\rangle|^{s}\right)^{\frac{1}{s}}.
\end{split}
\]
Then 
\[
|\langle T(\vec{f}\chi_{\alpha Q\setminus\alpha P_{j}})(x),\vec{e_{i}}\rangle|\leq 2B(\rho)\left(\frac{1}{\mu(\alpha Q)}\int_{\alpha Q}|\langle\vec{f},\vec{e_{i}}\rangle|^{s}\right)^{\frac{1}{s}}+\inf_{x'\in P_{j}\setminus\Omega}\left|T(\langle\vec{f}\chi_{\alpha Q\setminus\alpha P_{j}},\vec{e_{i}}\rangle)(x')\right|
\]
For the remaining term we note that 
\[\dfrac{1}{2}\mu(P_j)\leq\mu(P_j\setminus\Omega),\]
which implies that 
\[\inf_{x'\in P_{j}\setminus\Omega}\left|T\langle \vec{f}\chi_{\alpha P_{j}},\vec{e_{i}}\rangle(x')\right| \leq B(\rho)\left(\frac{1}{\mu(\alpha P_{j})}\int_{\alpha P_{j}}|\langle \vec{f},\vec{e_{i}}\rangle|^{s}\right)^{\frac{1}{s}}\]
since otherwise we would have that
\[P_{j}\setminus\Omega\subset\left\{ x\in P_{j}\,:\,\left|T(\langle\vec{f}\chi_{\alpha P_{j}},\vec{e_{i}}\rangle)(x)\right|>B(\rho)\left(\frac{1}{\mu(\alpha P_{j})}\int_{\alpha P_{j}}|\langle\vec{f},\vec{e_{i}}\rangle|^{s}\right)^{\frac{1}{s}}\right\}\, ,\]
which would in turn imply that 
\[
\begin{split} 
	\frac{1}{2}\mu(P_{j})& \leq\mu(P_{j}\setminus\Omega)\\
	&\leq\mu\left(\left\{ x\in P_{j}\,:\,\left|T(\langle\vec{f}\chi_{\alpha P_{j}},\vec{e_{i}}\rangle)(x)\right|>B(\rho)\left(\frac{1}{\mu(\alpha P_{j})}\int_{\alpha P_{j}}|\langle\vec{f},\vec{e_{i}}\rangle|^{s}\right)^{\frac{1}{s}}\right\} \right)\\
	&\leq \rho \mu(\alpha P_j) \leq \rho c_1 \mu(P_j)=\frac{1}{6nc_{2}}\mu(P_{j}),
\end{split}
\]
which is a contradiction. Then we have that
\begin{equation}\label{10.16}
	\begin{split}\inf_{x'\in P_{j}\setminus\Omega}\left|T\langle \vec{f}\chi_{\alpha P_{j}},\vec{e_{i}}\rangle(x')\right| & \leq B(\rho)\left(\frac{1}{\mu(\alpha P_{j})}\int_{\alpha P_{j}}|\langle \vec{f},\vec{e_{i}}\rangle|^{s}\right)^{\frac{1}{s}}\\
		& \leq B(\rho)\inf_{x'\in P_{j}\setminus\Omega}M_{s}(\langle \vec{f},\vec{e_{i}}\rangle\chi_{\alpha Q})(x')\\
		& \leq B(\rho)\left(\rho^{-1}\|M\|_{L^{s}\rightarrow L^{s,\infty}}\right)^{\frac{1}{s}}\left(\frac{1}{\mu(\alpha Q)}\int_{\alpha Q}|\langle \vec{f},\vec{e_{i}}\rangle|^{s}\right)^{\frac{1}{s}}.
	\end{split}
\end{equation}
Gathering the estimates above we have that
\[
\begin{split}
	&| \langle T(\vec{f}\chi_{\alpha Q})(x)\chi_{Q\setminus \cup P_j} + \sum_j T(\vec{f}\chi_{\alpha Q\setminus \alpha P_j})(x)\chi_{P_j}(x),\vec{e_i}\rangle |\\
	&\leq \left(3B(\rho) +B(\rho)\left(\rho^{-1}\|M\|_{L^{s}\rightarrow L^{s,\infty}}\right)^{\frac{1}{s}}  \right)\left(\frac{1}{\mu(\alpha Q)}\int_{\alpha Q}|\langle \vec{f},\vec{e_{i}}\rangle|^{s}\right)^{\frac{1}{s}}\\
	&=n\left(3B(\rho) +B(\rho)\left(\rho^{-1}\|M\|_{L^{s}\rightarrow L^{s,\infty}}\right)^{\frac{1}{s}}  \right)\dfrac{1}{n}\left(\frac{1}{\mu(\alpha Q)}\int_{\alpha Q}|\langle \vec{f},\vec{e_{i}}\rangle|^{s}\right)^{\frac{1}{s}}.
\end{split}
\]
and hence (\ref{eq:Claim2}) holds.

The representation in terms of functions $k_{Q}(x,y)$ follows from
Lemma \ref{lem:RepkQ}.
\end{proof}
Finally armed with the Lemma above we are in the position to settle
Theorem \ref{thm:ThmSparse}. 
\begin{proof}[Proof of Theorem \ref{thm:ThmSparse}]

Let $Q=Q_1^0$ be a cube. Iterating Lemma \ref{lem:It} we get a $\frac{1}{2}$-sparse family $\lbrace Q_j^k\rbrace\subset \mathcal{D}(Q)$ 
with
\[
\sum_{Q_{j}^{k+1}\subset Q_{i}^{k}}\mu(Q_{j}^{k+1})\leq\frac{1}{2}\mu(Q_{i}^{k})\, .
\] 
and functions $k_{Q_j^k}$ with  \[\left\Vert k_{Q_{j}^{k}}(x,y)\right\Vert _{L^{(s',\infty)}\left[(\alpha Q_{j}^{k},\frac{d\mu(y)}{\mu(\alpha Q_{j}^{k})}),(Q_{j}^{k},d\mu(x))\right]}\leq1\, ,\]	
such that

\begin{equation*}
	\label{T1.1}
	\begin{split}T(\vec{f}\chi_{\alpha Q})\chi_{Q}(x) & =\kappa\sum_{k=0}^{K-1}\sum_{j}\frac{1}{\mu(\alpha Q_{j}^k)}\int_{\alpha Q_{j}^k}k_{Q_{j}^{k}}(x,y)\vec{f}(y)dyd\mu(y)\chi_{Q_{j}^{k}}(x)\\
		& +\sum_{j}T(\vec{f}\chi_{\alpha Q_{j}^{K}})(x)\chi_{Q_{j}^{K}}(x)\, .
	\end{split}
\end{equation*}

Note that $\sum_j\mu(Q_j^K)\leq \frac{1}{2^K}\mu(Q)$, then $$\displaystyle\lim_{K\rightarrow\infty}\sum_j\mu(Q_j^K)=0$$ and hence, letting $K\rightarrow\infty$, for almost every $x\in Q$
\[
\begin{split}
	T(\vec{f}\chi_{\alpha Q})(x)\chi_{Q}(x)&=\kappa\sum_{k=0}^{\infty}\sum_{j}\dashint_{\alpha Q^k_{j}}k_{Q_{j}^{k}}(x,y)\vec{f}(y)d\mu(y)\chi_{Q_{j}^{k}}(x)\\
	&=\sum_{Q_j\in\mathcal{S}_Q}\dashint_{\alpha Q_{j}}k_{Q_{j}}(x,y)\vec{f}(y)d\mu(y)\chi_{Q_{j}}(x)\\
\end{split}	
\]
An application of Lemma \ref{lemma:covering} with $E=\text{sop}(f)$, allows us to say that there exists a partition of $X$, $\mathcal{P}\subset \mathcal{D}$ such that $E\subseteq \alpha Q $ for every $Q\in\mathcal{P}$. Then, 
\begin{equation}
	\label{diadicogenerico}
	\begin{split}
		T\vec{f}(x)&=\sum_{Q\in\mathcal{P}}T(\vec{f}\chi_{\alpha Q})(x)\chi_{Q}(x)\\
	\end{split}
\end{equation}
and it suffices to apply the estimate above to each term and we are done. Indeed, since $\mathcal{P}$ is a partition $\mathcal{S}=\cup_{Q\in\mathcal{P}}\mathcal{S}_{Q}$ is a $\frac{1}{2}$-sparse family.

To prove the furthermore part, we fix the parameters in Proposition \ref{proposition:dyadicsystem}. Then there exist $\mathcal{D}_{1},\dots,\mathcal{D}_{m}$ dyadic systems associated to those parameters  and a constant $\gamma\geq 1$ such that for every ball $B=B(s,\rho)$ there exists $Q\in \mathcal{D}_t$, $1\leq t\leq m$, such that
$$B(s,\rho)\subseteq Q, \hspace{1cm}  \hspace{1cm} diam(Q)\leq \gamma\rho.$$
Thus, repeating the argument above for  $\mathcal{D}_1$,  we have that for every $P\in\mathcal{S}\subset \mathcal{D}_1$, there exists $P'\in \mathcal{D}_j$ ($1\leq j\leq m$) such that 
$$\alpha P=B(z,\alpha C_0\delta^k)\subseteq P' \, , \quad diam(P')\leq \gamma\alpha C_0\delta^k\, .$$
By the doubling property there exists $c=c(X,\alpha)>0$ such that $$\mu(P')\leq \mu(B(z,\gamma\alpha C_0\delta^k))\leq c\mu(B(z,c_0\delta^k))\leq c\mu(P).$$
Since $\mathcal{S}$ is a $\frac{1}{2}$-sparse family, for every $P\in \mathcal{S}$ there exists $E_P\subset P$ such that $\frac{\mu(E_P)}{\mu(P)}\geq \frac{1}{2}$, and  $\left\lbrace E_P\right\rbrace_{P\in \mathcal{S}} $ is a disjoint family. Take $E_{P'}=E_P$ and then
$$\dfrac{\mu(E_{P'})}{\mu(P')}=\dfrac{\mu(E_P)}{\mu(P')}\geq\dfrac{\mu(E_P)}{c\mu(P)}\geq \dfrac{1}{2c},$$
therefore $\mathcal{S}_j=\left\lbrace P'\in\mathcal{D}_j:P\in\mathcal{S}\right\rbrace $ is a $\frac{1}{2c}$-sparse family.
We continue defining 
\[
k_{P'}(x,y)= \frac{\mu(P')}{\mu(\alpha P)}c^{-\frac{1}{s}} k_{P}(x,y)\chi_{P\times \alpha P}(x,y). 
\]
Then
\[
\begin{split}
	T\vec{f}(x)&=\sum_{P\in\mathcal{P}}T(\vec{f}\chi_{\alpha P})(x)\chi_{P}(x)\\
	&=\sum_{P\in\mathcal{S}} \dashint_{\alpha P}k_{P}(x,y)\vec{f}(y)d\mu(y)\chi_{P}(x)\\
	&=\sum_{P\in\mathcal{S}}   \frac{1}{\mu(P')}\int_{P'}\frac{\mu(P')}{\mu(\alpha P)}k_{P}(x,y)\vec{f}(y)\chi_{P\times \alpha P}(x,y)d\mu(y)\\
	&=c^{\frac{1}{s}} \sum_{j=1}^m\sum_{P'\in\mathcal{S}_j\subset \mathcal{D}_j}   \frac{1}{\mu(P')}\int_{P'}k_{P'}(x,y)\vec{f}(y)d\mu(y)\\
\end{split}
\]
where
\[
\left\Vert \left\Vert k_{P'}(x,y)\right\Vert _{L^{s'}\left(P',\frac{d\mu(y)}{\mu(P')}\right)}\right\Vert _{L^{\infty}(P',d\mu(x))}\leq1.
\]
Indeed,
\begin{align*}
 & \|k_{P'}(x,y)\|_{L^{s'}\left(P',\frac{d\mu(y)}{\mu(P')}\right)}={\displaystyle \sup_{\Vert h\Vert_{L^{s}}=1}\int_{P'}|k_{P'}(x,y)||h(y)|\frac{d\mu(y)}{\mu(P')}}\\
= & {\displaystyle \sup_{\Vert h\Vert_{L^{s}\left(P',\frac{d\mu(y)}{\mu(P')}\right)}=1}\int_{P'}\left|\frac{1}{\mu(\alpha P)}c^{-\frac{1}{s}}k_{P}(x,y)\chi_{P\times\alpha P}(x,y)\right||h(y)|d\mu(y)}\\
\leq & {\displaystyle \sup_{\Vert h\Vert_{L^{s}\left(P',\frac{d\mu(y)}{\mu(P')}\right)}=1}\left(\frac{1}{\mu(\alpha P)}\int_{P'}|c^{-\frac{1}{s}}k_{P}(x,y)|^{s'}\chi_{P\times\alpha P}(x)d\mu(y)\right)^{\frac{1}{s'}}\left(\frac{1}{\mu(\alpha P)}\int_{P'}|h(y)|^{s}d\mu(y)\right)^{\frac{1}{s}}}\\
= & {\displaystyle \sup_{\Vert h\Vert_{L^{s}\left(P',\frac{d\mu(y)}{\mu(P')}\right)}=1}c^{-\frac{1}{s}}\left(\dfrac{1}{\mu(\alpha P)}\int_{\alpha P}|k_{P}(x,y)|^{s'}\chi_{P}(x)d\mu(y)\right)^{\frac{1}{s'}}\left(\frac{\mu(P')}{\mu(\alpha P)}\cdot\frac{1}{\mu(P')}\int_{P'}|h(y)|^{s}d\mu(y)\right)^{\frac{1}{s}}}\\
\leq & {\displaystyle \sup_{\Vert h\Vert_{L^{s}\left(P',\frac{d\mu(y)}{\mu(P')}\right)}=1}c^{-\frac{1}{s}}\left(\dfrac{1}{\mu(\alpha P)}\int_{\alpha P}|k_{P}(x,y)|^{s'}\chi_{P}(x)d\mu(y)\right)^{\frac{1}{s'}}\cdot c^{\frac{1}{s}}\Vert h\Vert_{L^{s}\left(P',\frac{d\mu(y)}{\mu(P')}\right)}}\\
= & \left(\dfrac{1}{\mu(\alpha P)}\int_{\alpha P}|k_{P}(x,y)|^{s'}\chi_{P}(x)d\mu(y)\right)^{\frac{1}{s'}}\\
= & \Vert k_{P}(x,y)\Vert_{L^{s'}\left(\alpha P,\frac{d\mu(y)}{\mu(\alpha P)}\right)}\leq1
\end{align*}
and we are done.
\end{proof}

\section{The Petermichl operator revisited. Haar multipliers with variable kernel}\label{sec:PetOp}

If $(X,d,\mu)$ is a homogeneous type space with dyadic family $\mathcal{D}$, Aimar, Bernardis and Iaffei \cite{ABI} showed that Haar-type wavelets associated to the dyadic family $\mathcal{D}$ can be constructed and that such systems of Haar functions turn out to be unconditional bases for Lebesgue spaces $L^p(X,\mu)$ with $1 < p < \infty$. More precisely, the authors proved the following result.

\begin{prop}
Let $\mathcal{D}$ be a dyadic family on the space of homogeneous type $(X,d,\mu)$. There exists a system $\mathcal{H}$ of simple Borel measurable real functions $h$ on $X$ that satisfies the following properties:
\begin{itemize}
\item[\textit{(h.1)}] \textit{ For each $h \in \mathcal{H}$ there exists a unique $j \in \mathbb{Z}$ and a cube
$Q(h) \in  \mathcal{\tilde{D}}^j$ such that $\{x \in X: h(x) \not=0\} \subseteq Q(h)$, and this property does not hold for any cube
in $\mathcal{D}^{j+1}$.}
\item[\textit{(h.2)}] \textit{ For every $Q \in \mathcal{\tilde{D}}$ there exist exactly $M_Q = \#(\mathcal{L}(Q)) - 1 \geq 1$ functions $h \in \mathcal{H}$ such
that (h.1) holds where $\mathcal{L}(Q)$ is the set of the dyadic cube children of $Q$. We denote with $\mathcal{H}(Q)$  the set
of all these functions $h$.}
\item[\textit{(h.3)}] \textit{ For each  $h \in \mathcal{H}$ we have that $\int_X h
d \mu = 0$.}
\item[\textit{(h.4)}] \textit{ For each $Q\in  \mathcal{\tilde{D}}$ let $V_Q$ denote the
vector space of all functions on $Q$ which are constant on each
$Q^{'} \in \mathcal{L}(Q)$. Then the system
$\{\frac{\chi_{_{Q}}}{(\mu(Q))^{1/2}}\ \}\ \bigcup \mathcal{H}(Q)$
is an orthonormal basis for $V_Q$.}
\item[\textit{(h.5)}] \textit{ There exists a positive constant $C$ such that the inequality $|h(x)| \leq C |h(y)|$
holds for almost every $x$ and $y$ in $Q(h)$ and every $h \in \mathcal{H}$.}
\item[\textit{(h.6)}] \textit{$\mathcal{H}$ is an orthonormal basis of $L^2(X,\mu)$.}
\item[\textit{(h.7)}] \textit{$\mathcal{H}$ is an unconditional basis to $L^p(X,\mu)$ with $1 < p < \infty$.}
\end{itemize}
\end{prop}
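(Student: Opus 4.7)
The plan is to build the Haar system cube by cube via a Gram--Schmidt type procedure on the finite-dimensional spaces $V_Q$, then stitch the local pieces together and argue convergence in $L^p$. First I would fix $Q\in\mathcal{\tilde{D}}$ and list its dyadic children $\mathcal{L}(Q)=\{Q_1,\dots,Q_N\}$ with $N=\#\mathcal{L}(Q)\ge 2$. The space $V_Q$ has dimension $N$ with the obvious orthogonal basis $\{\chi_{Q_i}/\sqrt{\mu(Q_i)}\}_{i=1}^{N}$. Since $\chi_Q/\sqrt{\mu(Q)}$ is a unit vector in $V_Q$, I would complete it to an orthonormal basis by applying Gram--Schmidt to the list $\{\chi_Q/\sqrt{\mu(Q)},\chi_{Q_1}/\sqrt{\mu(Q_1)},\dots,\chi_{Q_N}/\sqrt{\mu(Q_N)}\}$, discarding the vector that becomes linearly dependent. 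This produces exactly $M_Q=N-1$ functions $h$, which I place in $\mathcal{H}(Q)$; each is a linear combination of $\chi_{Q_1},\dots,\chi_{Q_N}$, hence constant on every child, which immediately gives (h.1), (h.2), (h.4). Orthogonality to $\chi_Q/\sqrt{\mu(Q)}$ yields (h.3).

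For (h.5), the Gram--Schmidt construction lets me write each $h\in\mathcal{H}(Q)$ in closed form, so that $h|_{Q_i}$ is a constant $c_i(h)$ depending only on the measures $\mu(Q_j)$. A direct estimate of these constants shows $|c_i(h)|\le C|c_j(h)|$ for all children $Q_i,Q_j\subseteq Q(h)$ on which $h$ does not vanish, with $C$ depending only on the doubling constant of $\mu$ (through the uniform upper bound on the number of children of a dyadic cube in a space of homogeneous type, see Lemma~\ref{lemaResumen}). This is where I expect the main technical friction: controlling the number of children $N$ uniformly and confirming that the Gram--Schmidt coefficients stay uniformly bounded in terms of the doubling constant, since in general spaces of homogeneous type the number and relative sizes of siblings can fluctuate.

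The global steps (h.6) and (h.7) then reduce to standard martingale arguments. Collecting $\mathcal{H}=\bigcup_{Q\in\mathcal{\tilde D}}\mathcal{H}(Q)$, I would identify the conditional expectation $E_k f=\sum_{Q\in\mathcal{D}^k}\langle f\rangle_{1,Q}\chi_Q$ and observe that $V_Q$ decomposes orthogonally as $\mathrm{span}(\chi_Q)\oplus\mathrm{span}(\mathcal{H}(Q))$, so $(E_{k+1}-E_k)f=\sum_{Q\in\mathcal{D}^k}\sum_{h\in\mathcal{H}(Q)}\langle f,h\rangle h$. Orthogonality of the Haar system then follows from orthogonality of martingale differences across different generations and from the orthonormality inside each $V_Q$. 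Density of $\bigcup_k V_{\mathcal{D}^k}$ in $L^2(X,\mu)$ (a consequence of Lebesgue differentiation, available since $\mu$ is doubling and $X$ is a quasi-metric Borel measure space) gives (h.6). For (h.7) I would invoke Burkholder's unconditionality theorem for $L^p$-bounded martingale differences: the $L^p$-boundedness of the conditional expectations $E_k$ is automatic since they are averages over dyadic cubes, and the doubling condition plus (h.5) give the required square-function control, yielding unconditional convergence in $L^p$ for every $1<p<\infty$.
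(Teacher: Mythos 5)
The paper does not actually prove this proposition: it is imported verbatim from Aimar--Bernardis--Iaffei \cite{ABI}, so there is no internal argument to measure yours against, and the only question is whether your sketch stands on its own. Its skeleton -- per-cube orthogonalization inside $V_Q$, identification of the blocks $\mathcal{H}(Q)$ with the martingale differences $E_{k+1}-E_k$, and density of $\bigcup_k V_{\mathcal{D}^k}$ via Lebesgue differentiation -- is the standard route and does give (h.1)--(h.4) and (h.6), once you justify (as you anticipate) that every child of a dyadic cube has measure comparable to its parent, hence that $\#\mathcal{L}(Q)$ is uniformly bounded; this indeed follows from the inner/outer ball property of the dyadic system together with doubling, in the spirit of Lemma \ref{lemaResumen}.

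Two steps, however, have genuine gaps. First, (h.5) in the stated form fails for the Gram--Schmidt ordering you propose whenever a cube has three or more children: orthogonalizing $\chi_{Q_i}$ against $\chi_Q$ and the previously built functions forces the $i$-th function to vanish identically on $Q_1\cup\dots\cup Q_{i-1}$, while its minimal supporting cube $Q(h)$ is still all of $Q$; taking $y$ in an earlier child then destroys the inequality $|h(x)|\leq C|h(y)|$ for a.e.\ $x,y\in Q(h)$. Your estimate only yields comparability of the nonzero values of $h$, which is strictly weaker than (h.5) as stated; obtaining (h.5) requires a different, more careful choice of orthonormal basis of $V_Q\ominus\mathrm{span}(\chi_Q)$, which is precisely the content of the construction in \cite{ABI}. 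Second, for (h.7), Burkholder's theorem gives unconditionality with respect to sign changes that are constant on each martingale difference $D_Qf=\sum_{h\in\mathcal{H}(Q)}\langle f,h\rangle h$; unconditionality of the individual Haar functions additionally needs that arbitrary sign changes \emph{inside} each block $\mathcal{H}(Q)$ act uniformly boundedly on $L^p$, and this is exactly where the uniform bound on $\#\mathcal{L}(Q)$ and the comparability property (h.5) must actually enter (for instance through a square-function equivalence of the type $\|f\|_{L^p}\simeq\bigl\|\bigl(\sum_{h}|\langle f,h\rangle h|^2\bigr)^{1/2}\bigr\|_{L^p}$). As written, the phrase ``doubling plus (h.5) give the required square-function control'' asserts this conclusion rather than proving it.
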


The system $\mathcal{H}$ in the previous result will be called the Haar system associated with the dyadic family $\mathcal{D}$. In the sequel we shall write $\mathcal{S(H)}$ to denote the set of lineal combination of Haar functions in $\mathcal{H}$ and $\mathcal{S(\vec{H})}$ to denote the set of lineal combination of  vectorial Haar functions $\vec{h} \in \vec{\mathcal{H}}$, that is $\vec{h} = (h_1,h_2,...,h_n)$ with $h_i \in \mathcal{H}$ for $i=1,...,n$.
\

On the other hand, given a dyadic family $\mathcal{D}$ in the space of homogeneous type $(X,d,\mu)$, we say that $X$ has a unique quadrant when there exist a dyadic cube $Q \in \mathcal{D}$ such that
$$X = \underset{\{Q^{'}\in \mathcal{D}: Q \subseteq Q^{'}\}}{\bigcup}Q^{'}.$$
In this case we can define a dyadic metric, $\delta_{dy}$, associated to $\mathcal{D}$ such that the $\delta_{dy}$-balls are precisely the dyadic cubes of $\mathcal{D}$ (see \cite{AG, ACN} for futher details). This dyadic metric $\delta_{dy}$ is defined in $X \times X$ given by
\begin{align}\label{ultrametrica}
\delta_{dy} (x,y)=
\begin{cases}
min\{\mu(Q): x,y \in Q, Q \in \tilde{\mathcal{D}}\}&\text{\,if\,} \ \  x \not= y \\
0 &\text{\,if\,} \ \ x=y.
\end{cases}
\end{align}

In what follows, in this section we will always assume that $(X,d,\mu)$ has a single quadrant and we will consider as natural dyadic environment the homogeneous type space $(X,\delta_{dy},\mu)$.  Also we will write $<f,g>$ to denote $\int_X f(x)g(x) d\mu(x)$.
First a few comments on ball dilation associated with dyadic metric $\delta_{dy}$. Since the $\delta_{dy}$-balls $B$ are precisely the dyadic cubes $Q$ of the family $\mathcal{D}$, if $\alpha$ is a positive integer we have that $\alpha B$ is precisely the $\alpha$-th ancestor of $B$. That is, $\alpha B \in \mathcal{\tilde{D}}^{\alpha + j}$ when $B \in \mathcal{\tilde{D}}^{j}$.
This observation about dilated balls associated with the dyadic metric will be useful in what follows.

We will study properties of Haar multiplier type operators with variable kernel 
$$T_\eta f(x) \ = \ \sum_{h \in \mathcal{H}} \eta(x,h)\left<f,h\right> h(x)$$
when the function $\eta: X \times \mathcal{H} \longrightarrow \mathbb{R}$ verify some regularity properties that we will specify later. As a particular case of such operators we can cite the Petermichl operator $\mathcal{P}$ (see \cite{P}) given by $$\mathcal{P} f(x) \ = \ \sum_{I \in \mathcal{D}}\langle f,h_I\rangle \left(h_{I^-}(x) - h_{I^+}(x)\right)$$ with $x \in \mathbb{R}^+$. Here, as usual, $h_I$ denote the Haar wavelets with support in the dyadic interval $I$ and $h_{I^-}$, $h_{I^+}$ the Haar wavelets in the left and right halves of the dyadic interval $I$.
The operator $\mathcal{P}$ can be written as (see \cite{ACN})
$$\mathcal{P} f(x)\ =\ \frac{1}{\sqrt[]{ 2}}\sum_{h \in \mathcal{H}} \eta(x,h) \langle f,h\rangle h(x)$$
with $\eta(x,h) = 1$ if $x \in I^{--}_h \cup I^{+-}_h$ and $\eta(x,h) = -1$ if $x \in I^{-+}_h \cup I^{++}_h$, where for $h\in \mathcal{H}$ we denote with $I(h)$ the interval support of $h$, and we consider as $I^{--}_h$ the left quarter of $I(h)$,  $I^{-+}_h$ as the second quarter, $I^{+-}_h$ as the third quarter and $I^{++}_h$ as the last quarter of $I(h)$. 

Now we establish the conditions and regularity of the function $\eta$ of the operator $T_\eta$: Let $(X,d,\mu)$ be a space of homogeneous type, $\mathcal{D}$ a dyadic system, $d_{dy}$ the associated dyadic metric, and $\mathcal{H}$ a Haar system. We consider functions $\eta:X\times \mathcal{H}\rightarrow \mathbb{R}$ such that

\begin{itemize}
	\item[(a)]
	$|\eta(x,h)| \leq K,$ for all $x \in X$ and $h \in \mathcal{H}$
	\item[(b)]
	$|\eta(x',h) - \eta(x,h)| \leq K \frac{d_{dy}(x,x')}{\mu(Q(h))}$, for all $h \in \mathcal{H}$ and $x, x' \in X$. 
\end{itemize}
where $K>0$ is a constant, and $Q(h)$ is the cube associated with the function $h$. 

The following result establishes that under these conditions, the operator $T_\eta$ satisfies conditions of Theorem \ref{thm:ThmSparse}.

\begin{lem}
	\label{Teorema 5}
	Let $(X,d_{dy},\mu)$ be a space of homogeneous type with $d_{dy}$ the metric asocciated to the dyadic system $\mathcal{D}$ with parameters $c_{0}$, $C_{0}$ and $\delta$. Then for every $d_{dy}$-ball $B$, the operator $T_\eta$ defined as 
	$$T_\eta f(x) \ = \ \sum_{h \in \mathcal{H}} \eta(x,h)\left<f,h\right> h(x)\, ,$$
	satisfies that there exists a non increasing function $\psi$ such that
	\begin{equation}\label{debil T}
		\mu\left(\{x\in B:|T_\eta(g\chi_{B})(x)|>\psi(\lambda)\langle g\rangle_{1,B}\}\right)\le\lambda\mu(B)\quad  \quad  (0<\lambda<1)
	\end{equation}
	for every $g\in L^1(X)$, and there exists a decreasing function $\phi$ such that			
	\begin{equation}\label{debilTmax}
		\mu(\{x\in B:\mathcal{M}_{T_\eta,\alpha}^{\#}(f\chi_{B})(x)>\phi(\lambda)\langle f\rangle_{1,B}\})\le\lambda\mu(B)\quad  \quad(0<\lambda<1)
	\end{equation}
	for every $f\in L^1(X)$.
\end{lem}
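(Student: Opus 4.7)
The plan is to verify the two weak-type hypotheses separately. For \eqref{debil T} I will combine an $L^{2}$-boundedness statement for $T_\eta$ with a local dyadic Calder\'on--Zygmund decomposition, while for \eqref{debilTmax} I will show the pointwise domination
\[
\mathcal{M}_{T_\eta,\alpha}^{\#}(f\chi_{B})(x)\lesssim M^{\mathcal{D}}(f\chi_{B})(x),\qquad x\in B,
\]
and then invoke the standard weak-$(1,1)$ bound for the dyadic maximal operator. The hard ingredient will be the $L^{2}$ bound for $T_\eta$, because $\eta$ genuinely depends on $x$. I would attack it by splitting $\eta(x,h)=\bar\eta(h)+r(x,h)$ with $\bar\eta(h)=\mu(Q(h))^{-1}\int_{Q(h)}\eta(\cdot,h)\,d\mu$ and $r(\cdot,h)$ of mean zero on $Q(h)$. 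The constant-symbol piece $T_{\bar\eta}f=\sum_{h}\bar\eta(h)\langle f,h\rangle h$ is $L^{2}$-bounded with constant $K$ by (a) and the orthonormality of $\mathcal{H}$ (property (h.6)). For the variable piece $T_r$, expanding $\|T_r f\|_{L^2}^{2}$ and using (b) on the diagonal together with the cancellation of $h$ against the variations of $r(\cdot,h)$ on a child of $Q(h')$ in the off-diagonal pairs $Q(h)\subsetneq Q(h')$ produces a Schur-type estimate with geometric decay in $\mu(Q(h))/\mu(Q(h'))$.

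With the $L^{2}$ bound available, \eqref{debil T} follows from a standard local Calder\'on--Zygmund argument. Given $g\in L^{1}$ boundedly supported and a threshold $t$ (to be chosen in terms of $\lambda$ and $\langle|g|\rangle_{1,B}$), I take the maximal cubes $\{P_{j}\}\subset\mathcal{D}(B)$ with $\langle|g|\rangle_{1,P_{j}}>t$ and decompose $g\chi_{B}=g_{\mathrm{good}}+\sum_{j}b_{j}$ in the usual way, with $b_{j}=(g-\langle g\rangle_{P_{j}})\chi_{P_{j}}$, $\|g_{\mathrm{good}}\|_{\infty}\lesssim t$ and $\sum_{j}\mu(P_{j})\lesssim\|g\chi_B\|_{1}/t$. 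The key observation is that each $b_{j}$ has mean zero on $P_{j}$ and support in $P_{j}$, and any Haar function with $Q(h)\supsetneq P_{j}$ is constant on $P_{j}$ by (h.4); hence the Haar expansion of $b_{j}$ only involves $h$ with $Q(h)\subseteq P_{j}$, and therefore $T_\eta b_{j}$ is supported in $P_{j}$. This controls the bad part directly by $\mu(\bigcup_{j}P_{j})$, while the good part is handled by Chebyshev combined with $\|g_{\mathrm{good}}\|_{L^{2}}^{2}\lesssim t\|g\chi_B\|_{L^{1}}$ and the $L^{2}$ bound for $T_\eta$. Taking $t=\psi(\lambda)\langle|g|\rangle_{1,B}$ with $\psi(\lambda)=C/\lambda$ yields \eqref{debil T}.

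For \eqref{debilTmax} I would fix $x\in B$ and a $d_{dy}$-ball $B'\ni x$. Since $d_{dy}$ is an ultrametric, $B$ and $B'$ are nested. If $B\subseteq\alpha B'$ then $f\chi_{B}\chi_{X\setminus\alpha B'}\equiv 0$; otherwise $\alpha B'\subsetneq B$, and only Haar functions $h$ with $Q(h)\supsetneq\alpha B'$ survive in
\[
T_\eta(f\chi_{B}\chi_{X\setminus\alpha B'})(y)=\sum_{h:\,Q(h)\supsetneq\alpha B'}\eta(y,h)\,\langle f\chi_{B\setminus\alpha B'},h\rangle\, h(y),
\]
because Haar functions with $Q(h)\subseteq\alpha B'$ pair to zero against $f\chi_{B\setminus\alpha B'}$. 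For the surviving $h$, $\alpha B'$ lies in a single child of $Q(h)$, so $h$ is constant on $B'\subseteq\alpha B'$, and in particular $h(y)=h(z)$ for every $y,z\in B'$. The difference therefore reduces to a sum of terms of the form $[\eta(y,h)-\eta(z,h)]\langle f\chi_{B\setminus\alpha B'},h\rangle h(y)$; each is controlled via (b), $\|h\|_{\infty}\lesssim\mu(Q(h))^{-1/2}$ and $|\langle f\chi_{B\setminus\alpha B'},h\rangle|\lesssim\mu(Q(h))^{-1/2}\int_{B\cap Q(h)}|f|\,d\mu$, by
\[
K\,\mu(B')\,\mu(Q(h))^{-2}\int_{B\cap Q(h)}|f|\,d\mu.
\]
Splitting the resulting sum along the dyadic chain into the ranges $\alpha B'\subsetneq Q\subseteq B$ and $Q\supsetneq B$ produces two geometric series, both dominated by $M^{\mathcal{D}}(f\chi_{B})(x)$. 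The weak-$(1,1)$ boundedness of $M^{\mathcal{D}}$ then yields \eqref{debilTmax} with $\phi(\lambda)=C/\lambda$.
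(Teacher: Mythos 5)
Your treatment of \eqref{debilTmax} is essentially the paper's own argument: for a ball $B'\ni x$ only the Haar functions with $Q(h)\supseteq\alpha B'$ survive in $T_\eta(f\chi_B\chi_{X\setminus\alpha B'})$, these are constant on $B'$, condition (b) together with the size bounds on $h$ and the geometric growth of the measures of the ancestors of $\alpha B'$ gives $\mathcal{M}^{\#}_{T_\eta,\alpha}(f\chi_B)(x)\lesssim M^{\mathcal{D}}f(x)$, and the weak $(1,1)$ bound for $M^{\mathcal{D}}$ finishes. That part is correct and matches the paper.

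The gap is in your route to \eqref{debil T}, namely the $L^{2}$ bound for the variable piece $T_r$. The paper never proves $L^2$ boundedness: it invokes Theorem 4.1 of \cite{ACN}, which states that $T_\eta$ is a Calder\'on--Zygmund operator on $(X,d_{dy},\mu)$ and hence of weak type $(1,1)$, and then \eqref{debil T} follows directly from Chebyshev with $\psi(\lambda)=\lambda^{-1}\|T_\eta\|_{L^1\to L^{1,\infty}}$ (no Calder\'on--Zygmund decomposition is needed, although your observation that $T_\eta b_j$ is supported in $P_j$ is correct and would be fine if the weak $(1,1)$ input were available). Your Schur-type argument for $T_r$ does not close as stated. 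For a pair $Q(h)\subsetneq Q(h')$, the mean-zero of $h$ only gains on the factor $r(\cdot,h')$, whose oscillation on $Q(h)$ is $\lesssim K\,\mu(Q(h))/\mu(Q(h'))$; the factor $r(\cdot,h)$ oscillates at unit scale on its own cube, because the $d_{dy}$-diameter of $Q(h)$ is comparable to $\mu(Q(h))$, so (b) gives no smallness there. What survives is a term of the form $r(x_h,h')\,h'(x_h)\,\langle \eta(\cdot,h),h\rangle\,\langle f,h\rangle\langle f,h'\rangle$, and the only available bound is $|\langle \eta(\cdot,h),h\rangle|\lesssim K\mu(Q(h))^{1/2}$, i.e.\ a decay of order $\bigl(\mu(Q(h))/\mu(Q(h'))\bigr)^{1/2}$ rather than the claimed geometric decay in the ratio. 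That half-power decay is not enough for a plain Schur test: fixing $Q(h')$ and summing $\bigl(\mu(Q)/\mu(Q(h'))\bigr)^{1/2}$ over its descendants at generation $k$ can be as large as $N_k^{1/2}$ with $N_k$ growing geometrically in $k$. So the off-diagonal part needs a genuinely different mechanism (a paraproduct-type decomposition with a Carleson embedding or almost-orthogonality argument, or simply quoting the dyadic Calder\'on--Zygmund result of \cite{ACN}); as written, the key input to your proof of \eqref{debil T} is unproved.
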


\begin{proof}
	By Theorem 4.1 in \cite{ACN} we have that the operator $T_\eta$ is a Calder\'on-Zygmund operator on the space $(X,d_{dy},\mu)$ and therefore is of weak type $(1,1)$. Then, applying Jensen's inequality and taking $\psi(\lambda)=\frac{1}{\lambda}\|T_{\eta}\|_{L^1\rightarrow L^{(1,\infty)}}$ we obtain	
	\begin{align*}
\mu\left(\left\{ x\in B:|T_{\eta}(g\chi_{B})|>\psi(\lambda)\langle g\rangle_{1,B}\right\} \right) & \leq\frac{\|T_{\eta}\|_{L^{1}\rightarrow L^{(1,\infty)}}}{\psi(\lambda)\langle g\rangle_{1,B}}\int_{B}|g|\\
 & =\frac{\lambda\|T_{\eta}\|_{L^{1}\rightarrow L^{(1,\infty)}}}{\|T_{\eta}\|_{L^{1}\rightarrow L^{(1,\infty)}}\langle g\rangle_{1,B}}\int_{B}|g|\\
 & =\lambda\mu(B)
\end{align*}
which proves \eqref{debil T}.

	Although \eqref{debilTmax} follows from the fact that $T_\eta$ is a Calder\'on-Zygmund operator on the space $(X,d_{dy},\mu)$ we provide a direct argument to settle that estimate. 

First we call
\[
N(x,y)=\sum_{h\in\mathcal{H}}\eta(x,h)h(x)h(y).
\]

\noindent Let $x\in X$ and $B$ a $d_{dy}$-ball such that $x\in B$,
let $\alpha\in\mathbb{N}$ and let $y\in B$. If $w\in X\setminus\alpha B$,
then,
\begin{itemize}
\item If $Q(h)\subset\alpha B$, then $h(w)=0$. Therefore, $h(y)h(w)=0$. 
\item If $Q(h)\cap\alpha B=\emptyset$, since $y\in B$, then $y\notin Q(h)$,
and thus $h(y)=0$. Consequently $h(y)h(w)=0$.
\end{itemize}
These are the only possible cases since $\alpha B$ and $Q(h)$ are
dyadic cubes and cannot intersect partially.

Then we are left with the non-zero terms and we have that

\[
N(y,w)=\sum_{h\in\mathcal{H}}\eta(y,h)h(w)h(y)=\sum_{\underset{Q(h)\supseteq\alpha B}{h\in\mathcal{H}}}\eta(y,h)h(w)h(y).
\]
At this point we recall that if $y\in X\setminus\alpha B$ and $x\in B$,
\[
\sum_{\underset{Q(h)\supseteq\alpha B}{h\in\mathcal{H}}}h(x)h(y)\leq\frac{1}{\mu(Q(x,y))}
\]
where $Q(x,y)$ is the smallest dyadic cube that contains $x$ and
$y$. Since $\eta$ is bounded this yields 
\[
|N(x,y)|\leq\frac{K}{\mu(Q(x,y))}
\]
for some $K>0$ depending on $\eta$.  Bearing this in mind we show
that $f\in L^{1}(X)$, if $B$ is a $d_{dy}$-ball and $x\in B$ then
\[
T_{\eta}(f\chi_{\{X\setminus\alpha B\}})(x)=\int_{X\setminus\alpha B}N(x,y)f(y)d\mu(y).
\]
Note that it suffices to show that
\[
y\longmapsto|N(x,y)||f(y)|\in L^{1}(X\setminus\alpha B).
\]
and, by the estimates above, in order to prove that, it suffices to
show that 
\[
\int_{X\setminus\alpha B}\frac{|f(y)|}{\mu(Q(x,y))}d\mu(y)<\infty.
\]
Let $B_{0}=\alpha B$ and and for every non negative integer $k$ let $B_k$ the $k$-th ancestor of $\alpha B$, we have that 
\[
X\setminus\alpha B=\bigcup_{k\geq0}B_{k+1}\setminus B_{k}.
\]
Hence
\begin{align*}
\int_{X\setminus\alpha B}\frac{|f(y)|}{\mu(Q(x,y))} & =\sum_{k=0}^{\infty}\int_{B_{k+1}\setminus B_{k}}\frac{|f(y)|}{\mu(Q(x,y))}d\mu(y)\leq\sum_{k=0}^{\infty}\int_{B_{k+1}\setminus B_{k}}\frac{|f(y)|}{\mu(Q(x,y))}d\mu(y)\\
 & \leq\sum_{k=0}^{\infty}\frac{1}{\mu(B_{k})}\int_{B_{k+1}\setminus B_{k}}|f(y)|d\mu(y)\leq\|f\|_{L^{1}}\sum_{k=0}^{\infty}\frac{1}{\mu(B_{k})}<\infty
\end{align*}
 since $\mu(B_k)\gtrsim c_\mu^{-k \log_2(\delta)}\mu(B_0)$ for every $k\geq0$.

Now we can continue our argument as follows. If $B$ is a ball and
$y,z\in B$ then we have that

\begin{eqnarray*}
 &  & \left|T_{\eta}(f\chi_{X\setminus\alpha B})(y)-T_{\eta}(f\chi_{X\setminus\alpha B})(z)\right|\\
 & = & \left|\int_{X\setminus\alpha B}\left(N(y,w)-N(z,w)\right)f(w)d\mu(w)\right|\\
 & = & \left|\int_{X\setminus\alpha B}\sum_{\underset{\alpha B\subset Q(h)}{h\in\mathcal{H}}}\left(\eta(y,h)h(y)-\eta(z,h)h(z)\right)h(w)f(w)d\mu(w)\right|\\
 & = & \left|\int_{X\setminus\alpha B}\sum_{\underset{\alpha B\subset Q(h)}{h\in\mathcal{H}}}\left(\eta(y,h)-\eta(z,h)\right)h(y)h(w)f(w)d\mu(w)\right|\\
 & \leq & \int_{X\setminus\alpha B}\sum_{\underset{\alpha B\subset Q(h)}{h\in\mathcal{H}}}\left|\left(\eta(y,h)-\eta(z,h)\right)h(y)h(w)f(w)\right|d\mu(w)\\
 & \leq & C\int_{X\setminus\alpha B}\sum_{\underset{\alpha B\subset Q(h)}{h\in\mathcal{H}}}\left|\eta(y,h)-\eta(z,h)\right|\frac{1}{\mu(Q(h))}|f(w)|d\mu(w)\\
 & \leq & KC\int_{X\setminus\alpha B}\sum_{\underset{\alpha B\subset Q(h)}{h\in\mathcal{H}}}\frac{d_{dy}(y,z)}{\mu(Q(h))}\frac{1}{\mu(Q(h))}|f(w)|d\mu(w)\\
 & = & \tilde{C}d_{dy}(y,z)\int_{X\setminus\alpha B}\sum_{\underset{\alpha B\subset Q(h)}{h\in\mathcal{H}}}\frac{1}{(\mu(Q(h)))^{2}}|f(w)|d\mu(w)\\
 & \leq & \tilde{C}d_{dy}(y,z)\int_{X\setminus\alpha B}\frac{|f(w)|}{(d_{dy}(z,w))^{2}}d\mu(w).
\end{eqnarray*}
Again, calling $B_{0}=\alpha B$ and 
$B_{k}$ the $k$-th ancestor of $\alpha B$ for every non negative integer $k$, we continue the
computation above as follows. 
\begin{eqnarray*}
 &  & \tilde{C}d_{dy}(y,z)\int_{X\setminus\alpha B}\frac{|f(w)|}{(d_{dy}(z,w))^{2}}d\mu(w)=\tilde{C}d_{dy}(y,z)\sum_{k=0}^{\infty}\int_{B_{k+1}\setminus B_{k}}\frac{|f(w)|}{(d_{dy}(z,w))^{2}}d\mu(w)\\
 &  & \leq\tilde{C}\mu(B_{0})\sum_{k=0}^{\infty}\frac{1}{(\mu(B_{k+1}))^{2}}\int_{B_{k+1}}|f(w)|d\mu(w)\lesssim C\sum_{k=1}^{\infty}\frac{\mu(B_{0})}{\mu(B_{k})}M^{\mathcal{D}}f(x)\\
 &  & \lesssim\sum_{k=1}^{\infty}    c_{\mu}^{k\log_{2}(\delta)}  M^{\mathcal{D}}f(x)=\sum_{k=1}^{\infty}C^{k}M^{\mathcal{D}}f(x)=\dfrac{C}{1-C}M^{\mathcal{D}}f(x).
\end{eqnarray*}
Summarizing, for $C=c_{\mu}^{\log_{2}\delta}$ we have that

\[
\left|T_{\eta}(f\chi_{X\setminus\alpha B})(y)-T_{\eta}(f\chi_{X\setminus\alpha B})(z)\right|\lesssim \dfrac{C}{1-C}M^{\mathcal{D}}f(x).
\]

From this it readily follows that $\mathcal{M}_{T_{\eta},\alpha}^{\#}f(x)\lesssim M^{\mathcal{D}}f(x)$,
and therefore $\mathcal{M}_{T_{\eta},\alpha}^{\#}$ is of weak type
$(1,1)$ and then (\ref{debilTmax}) holds with $\phi(\lambda)=\dfrac{1}{\lambda}\|\mathcal{M}_{T_{\eta},\alpha}^{\#}\|_{L^{1}\rightarrow L^{(1,\infty)}}$. 
\end{proof}

The result above allows us to apply Theorem \ref{thm:ThmSparse} to Haar multiplier operators obtaining the following Corollary.

\begin{cor}
	\label{Teorema 6}
	Let $(X,d_{dy},\mu)$ a space of homogeneous type with $d_{dy}$ the metric asocciated to the dyadic system $\mathcal{D}$ with parameters $c_{0}$, $C_{0}$ y $\delta$. Let $\alpha\geq\frac{3c_{d}^{2}}{\delta}$ and  $\vec{f}:X\rightarrow\mathbb{R}^{n}$ a boundedly supported funtion $|\vec{f}|\in L^{1}(X)$, then, there exists a $\frac{1}{2}$-sparse family $\mathcal{S}\subset\mathcal{D}$ such that
	\[
	T\vec{f}(x)\in c_{n,d}c_{T}\sum_{Q\in\mathcal{S}}\langle\langle\vec{f}\rangle\rangle_{s,\alpha Q}\chi_{Q}(x).
	\]
	In other words, we have that
	\begin{equation}
		\label{T1.a}
		T\vec{f}(x)=c_{n,d}c_{T}\sum_{Q\in\mathcal{S}}\frac{1}{\mu(\alpha Q)}\int_{\alpha Q}\vec{f}(y)k_{Q}(x,y)d\mu(y)\chi_{Q}(x)
	\end{equation}
	where
	\[
	\left\Vert k_{Q}\right\Vert _{L^{(\infty,\infty)}\left[(Q,\frac{\mu}{\mu(Q)}),(\alpha Q,\mu)\right]}\leq1
	\]
	furthermore, there exist $0<c_{0}\leq C_{0}<\infty$, $0<\delta<1$, $\gamma\geq1$ and $m\in\mathbb{N}$ such that there are dyadic systems $\mathcal{D}_{1},\dots,\mathcal{D}_{m}$ with parameters $c_{0},C_{0}$ and $\delta$, and $m$ sparse families $\mathcal{S}_{i}\subset\mathcal{D}_{i}$ such that
	\begin{equation}
		\label{T1.b}
		T\vec f(x)=c_{n,d}c_{T}\sum_{j=1}^{m}\sum_{Q\in\mathcal{S}_{j}}\frac{1}{\mu(Q)}\int_{Q}\vec{f}(y)k_{Q}(x,y)d\mu(y)\chi_{Q}(x)
	\end{equation}
	where $\left\Vert k_{Q}\right\Vert _{L^{(\infty,\infty)}\left[(Q,\frac{d\mu(y)}{\mu(Q)}),(Q,d\mu(x))\right]}\leq1$.
\end{cor}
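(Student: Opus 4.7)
The proof will be essentially a direct invocation of Theorem \ref{thm:ThmSparse} after feeding in the two weak-type hypotheses supplied by the preceding Lemma. Since the lemma is already stated immediately before the corollary, the plan is to set the parameters $q=r=1$ (so $s=\max(q,r)=1$ and $s'=\infty$) and apply the convex body domination theorem to the Haar multiplier operator $T=T_\eta$ acting componentwise on $\vec{f}$.

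More concretely, first I would observe that, by the previous lemma, for every $d_{dy}$-ball $B$ and every boundedly supported $g\in L^{1}(X)$,
\[
\mu\left(\{x\in B:|T_\eta(g\chi_{B})(x)|>\psi(\lambda)\langle g\rangle_{1,B}\}\right)\le\lambda\mu(B)\qquad(0<\lambda<1),
\]
\[
\mu\left(\{x\in B:\mathcal{M}_{T_\eta,\alpha}^{\#}(g\chi_{B})(x)>\phi(\lambda)\langle g\rangle_{1,B}\}\right)\le\lambda\mu(B)\qquad(0<\lambda<1),
\]
with $\psi(\lambda)=\lambda^{-1}\|T_\eta\|_{L^1\to L^{(1,\infty)}}$ and $\phi(\lambda)=\lambda^{-1}\|\mathcal{M}_{T_\eta,\alpha}^{\#}\|_{L^1\to L^{(1,\infty)}}$, both non-increasing in $\lambda$. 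Since the balls of the space $(X,d_{dy},\mu)$ are exactly the dyadic cubes of $\mathcal{D}$, these hypotheses match precisely the ones required by Theorem \ref{thm:ThmSparse} with $q=r=1$.

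Second, I would apply Theorem \ref{thm:ThmSparse} directly to $\vec{f}$. With $s=\max(q,r)=1$ and $s'=\infty$, the theorem produces a $\frac{1}{2}$-sparse family $\mathcal{S}\subset\mathcal{D}$ and kernels $k_Q$ such that
\[
T\vec f(x)=c_{n,d}c_{T}\sum_{Q\in\mathcal{S}}\frac{1}{\mu(\alpha Q)}\int_{\alpha Q}\vec{f}(y)k_{Q}(x,y)\,d\mu(y)\,\chi_{Q}(x),
\]
which yields \eqref{T1.a}. The mixed norm control $\left\Vert k_{Q}\right\Vert _{L^{(s',\infty)}\left[(Q,\mu/\mu(Q)),(\alpha Q,\mu)\right]}\leq1$ collapses, in the case $s=1$, to $\left\Vert k_{Q}\right\Vert _{L^{(\infty,\infty)}\left[(Q,\mu/\mu(Q)),(\alpha Q,\mu)\right]}\leq1$, which is exactly the normalization claimed in the statement.

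Finally, the ``furthermore'' part \eqref{T1.b} is obtained verbatim from the corresponding ``furthermore'' part of Theorem \ref{thm:ThmSparse}: choose the adjacent dyadic systems $\mathcal{D}_1,\dots,\mathcal{D}_m$ supplied by Proposition \ref{proposition:dyadicsystem} and transfer each cube $P\in\mathcal{S}$ to a cube $P'\in\mathcal{D}_j$ containing $\alpha P$ with comparable measure; the renormalized kernels $k_{P'}$ inherit the $L^{(\infty,\infty)}$ bound. There is no real obstacle here — the only thing to double-check is that the hypotheses of Theorem \ref{thm:ThmSparse} are satisfied for a boundedly supported $\vec{f}$ with $|\vec{f}|\in L^{1}(X)$, which is the case $s=1$ of the assumption $|\vec f|\in L^s(X)$ in that theorem. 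Thus the corollary reduces to a clean specialization of the general sparse domination result once the previous lemma is in hand.
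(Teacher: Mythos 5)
Your proposal is correct and follows exactly the paper's route: the paper offers no separate argument for the corollary beyond noting that the preceding lemma supplies the two weak-type hypotheses of Theorem \ref{thm:ThmSparse} with $q=r=1$ (hence $s=1$, $s'=\infty$), so the corollary, including the ``furthermore'' part and the $L^{(\infty,\infty)}$ kernel normalization, is a direct specialization of that theorem, just as you describe.
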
	
At this point it is worth noting that the weighted estimates obtained in Section \ref{sec:WEst} also work for $T_\eta$ under the conditions of the Theorem above, since the estimates provided there were obtained in terms convex body sparse operators. 

\section*{Declarations}
\subsection*{Ethical Approval} 
Not applicable
\subsection*{Funding}
The last author was partially supported by the Spanish Ministry of Science and Innovation
through the project PID2022-136619NB-I00 and by Junta de Andalucía through the project FQM-354.
\subsection*{Availability of data and materials}
Not applicable

\end{document}